\def \N{{\mathbb N}}
\def \R{{\mathbb R}}
\def \1{{\mathbb 1}}
\newtheorem{Exemp}{Examples}
\newtheorem{Thm}{Theorem}
\newtheorem{Prop}{Proposition}
\newtheorem{Def}{Definition}
\newtheorem{Rem}{Remark}
\newtheorem{Lem}{Lemma}
\newtheorem{Def Nota}{Definitions and notations} 
\newtheorem{Cor}{Corollary}
\font\ninerm=cmr9
\long\outer\def\abstract#1{\bigskip\vbox{\noindent\ninerm
\baselineskip=10pt#1}\nobreak\bigskip}
\def\exo#1{\advance\numero by 1\bigskip
{\noindent\tenbf #1\the\numero. }}
\def\frac#1#2{{#1\over #2}}
\title{On the Krein-Milman-Ky Fan theorem for convex compact metrizable sets.}   
\author{Mohammed Bachir}
\begin{document}
\maketitle

\begin{center} {\it Laboratoire SAMM 4543, Universit\'e Paris 1 Panth\'eon-Sorbonne, Centre P.M.F. 90 rue Tolbiac 75634 Paris cedex 13}
\end{center}
\begin{center} 
{\it Email : Mohammed.Bachir@univ-paris1.fr}
\end{center}
\noindent\textbf{Abstract.} The Krein-Milman theorem (1940) states that every convex compact subset of a Hausdorff
locally convex topological space, is the closed convex hull of its extreme points. In 1963, Ky Fan extended the Krein-Milman theorem to the general framework of $\Phi$-convexity. Under general conditions on the class of functions $\Phi$, the Krein-Milman-Ky Fan theorem asserts then, that every compact $\Phi$-convex subset of a Hausdorff space, is the $\Phi$-convex hull of its $\Phi$-extremal points. 
We prove in this paper that, in the metrizable case the situation is rather better. Indeed, we can replace the set of $\Phi$-extremal points by the smaller subset of $\Phi$-exposed points. We establish under general conditions on the class of functions $\Phi$, that every $\Phi$-convex compact metrizable subset of a Hausdorff space, is the $\Phi$-convex hull of its $\Phi$-exposed points. As a consequence we obtain that each convex weak compact metrizable (resp. convex weak$^*$ compact metrizable) subset of a Banach space (resp. of a dual Banach space), is the closed convex hull of its exposed points (resp. the weak$^*$ closed convex hull of its weak$^*$ exposed points). This result fails in general for compact $\Phi$-convex subsets that are not metrizable.
\vskip5mm
\noindent{\bf Keyword, phrase:} Extreme points, exposed and $\Phi$-exposed points, $\Phi$-convexity, Krein-Milman theorem, variational principle.\\
{\bf 2010 Mathematics Subject:}  46B22, 46B20, 49J50.

\tableofcontents

\section{Introduction.}

Let $S$ be any nonempty set, $\Phi$ a familly of real valued functions on $S$. A subset $X\subseteq S$ is said to be $\Phi$-convex if $X=S$ or there exists a nonempty set $I$, such that $$X=\cap_{i\in I} \lbrace x\in S: \varphi_i(x)\leq \lambda_i\rbrace$$
where $\varphi_i\in \Phi$ and $\lambda_i\in \R$ for all $i\in I$. For a nonempty set $A\subseteq S$, the intersection of all $\Phi$-convex subset of $S$ contaning $A$ is said to be the $\Phi$-convex hull of $A$. By $\textnormal{conv}_\Phi(A)$ we denote the $\Phi$-convex hull of $A$.

\noindent Let $a, x, y \in S$, we say that $a$ is $\Phi$-between $x$ and $y$, if

$$(\varphi \in \Phi, \varphi(x)\leq\varphi(a), \varphi(y)\leq\varphi(a))\Longrightarrow (\varphi(a)=\varphi(x)=\varphi(y)).$$
Let $\emptyset\neq A \subseteq B\subseteq S$. The set $A$ is said to be $\Phi$-extremal subset of $B$, if 
$$(a\in A, a \textnormal{ is } \Phi\textnormal{-between the points } x,y\in B)\Longrightarrow (x\in A, y\in A).$$
If $A$ is a singleton $A=\lbrace a \rbrace$, we say that $a$ is $\Phi$-extremal point of $B$. The set of all $\Phi$-extremal points of a nonempty set $A$ will be denoted by $\Phi\textnormal{Ext}(A)$.
\vskip5mm
When $S$ is a Hausdorff locally convex topological vector space (in short $\textnormal{l.c.t}$ space, {\it "Hausdorff"} will be implicit), and $\Phi=S^*$ is the topological dual of $S$, then the $\Phi$-convexity and the classical convexity coincides for closed subsets of $S$ and the $\Phi$-convex hull of a set coincides with its closed convex hull. Also, the $\Phi$-extremal points of a nonempty set, coincides with its extreme points [\cite{Kh}, Proposition 2.] ( Note that in \cite{Kh}, the author use a class $\Psi$ of functions in the definitions which correspond to the class $-\Phi$ in the above definitions. The class $\Phi$ concidered in this paper will be a vector space). Recall that if $C$ is a subset of $S$, we say that a point $x\in C$ 
is an extreme point of $C$, and write $x \in \textnormal{Ext}(C)$, if and only if :
$y, z \in C, 0<\alpha <1 ;\hspace{2mm} x = \alpha y+(1-\alpha)z \Longrightarrow x = y = z.$
\vskip5mm

The result in what is known as  the Krein-Milman theorem (1940, \cite{KM}), asserts that if $K$ is a convex compact subset of an $\textnormal{l.c.t}$ space, then $K$ is the closed convex hull of its extreme points,
$$K=\overline{\textnormal{conv}}(\textnormal{Ext}(K)).$$ 
The Krein-Milman theorem has a partial converse known as Milman's theorem (See \cite{P}) which states that if $A$ is a subset of $K$ and the closed convex hull of $A$ is all of $K$, then every extreme point of $K$ belongs to the closure of $A$, 
$$(A\subset K; \hspace{2mm} K=\overline{\textnormal{conv}}(A))\Longrightarrow \textnormal{Ext}(K)\subset \overline{A}.$$ 

In \cite{Ky}, Ky Fan extended the Krein-Milman theorem to the more general framework of $\Phi$-convexity.

\begin{Thm} (Krein-Millman-Ky Fan) Let $S$ be a Hausdorff space and $\Phi$ a familly of real valued functions defined on $S$. Let $K$ be a nonempty compact $\Phi$-convex subset of $S$ and suppose that:

$(1)$ the restriction of each $\varphi \in \Phi$ to $K$, is lower semicontinuous on $K$;

$(2)$ $\Phi$ separate the points of $K$.

\noindent Then,  $\Phi\textnormal{Ext}(K)\neq \emptyset$ and $K=\textnormal{conv}_\Phi(\Phi\textnormal{Ext}(K))$.
\end{Thm}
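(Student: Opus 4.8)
The plan is to mimic the classical proof of the Krein-Milman theorem, with the role of ``a face cut out by a continuous linear functional'' played by the subset of a compact $\Phi$-extremal set on which a member of $\Phi$ attains its maximum. The whole argument rests on two elementary stability properties of the relation ``$A$ is a $\Phi$-extremal subset of $B$'', which I would record first. (i) The intersection of a nonempty chain (totally ordered by inclusion) of $\Phi$-extremal subsets of $K$ is again $\Phi$-extremal in $K$: this is immediate from the definition of ``$\Phi$-between'', since the defining implication at a point of the intersection holds because it holds inside each member of the chain. (ii) For every $\varphi\in\Phi$ and every $\Phi$-extremal subset $G$ of $K$, the set $G_{\varphi}:=\{x\in G:\varphi(x)\ge\varphi(z)\text{ for all }z\in G\}$ is a $\Phi$-extremal subset of $K$; indeed, if $a\in G_{\varphi}$ is $\Phi$-between $x,y\in K$, then $x,y\in G$ because $G$ is $\Phi$-extremal in $K$, whence $\varphi(x)\le\varphi(a)$ and $\varphi(y)\le\varphi(a)$ (as $a$ maximises $\varphi$ over $G$), so the defining implication of ``$\Phi$-between'' applied to $\varphi$ itself forces $\varphi(a)=\varphi(x)=\varphi(y)$, i.e. $x,y\in G_{\varphi}$. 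When $G$ is compact, $G_{\varphi}$ should moreover be nonempty and closed, and this is the one point at which hypothesis $(1)$ enters; I flag it below as the delicate step.

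\emph{Existence of a $\Phi$-extremal point.} I would prove the stronger statement that every nonempty closed $\Phi$-extremal subset $G$ of $K$ contains a point of $\Phi\textnormal{Ext}(K)$. Order the nonempty closed $\Phi$-extremal subsets of $K$ contained in $G$ by inclusion; by the finite intersection property in the compact space $K$ together with $(i)$, every chain has a lower bound in this family (its intersection), so Zorn's lemma (applied to reverse inclusion) yields a minimal element $M$. If $M$ had two distinct points $x,y$, hypothesis $(2)$ would supply $\varphi\in\Phi$ with $\varphi(x)\ne\varphi(y)$, and then $M_{\varphi}$ from $(ii)$ would be a nonempty closed $\Phi$-extremal subset of $K$ strictly contained in $M$ (strictly, because $\varphi$ is not constant on $M$) -- contradicting minimality. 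Hence $M=\{p\}$, and since $\{p\}$ is then a $\Phi$-extremal subset of $K$, we get $p\in\Phi\textnormal{Ext}(K)\cap G$. Taking $G=K$ gives $\Phi\textnormal{Ext}(K)\ne\emptyset$.

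\emph{The hull equality.} Put $C:=\textnormal{conv}_\Phi(\Phi\textnormal{Ext}(K))$; this is a $\Phi$-convex set, being an intersection of $\Phi$-convex sets. Since $K$ is $\Phi$-convex and contains $\Phi\textnormal{Ext}(K)$ we have $C\subseteq K$, so it suffices to rule out the existence of some $x_0\in K\setminus C$ (the case $C=S$ forces $K=S=C$). If such an $x_0$ existed, then, writing $C=\bigcap_{i}\{x:\varphi_i(x)\le\lambda_i\}$ with $\varphi_i\in\Phi$, there would be an index $i_0$ with $\varphi:=\varphi_{i_0}$ satisfying $\varphi\le\lambda_{i_0}$ on $C$ and $\varphi(x_0)>\lambda_{i_0}$. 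By $(ii)$, $K_{\varphi}$ is a nonempty closed $\Phi$-extremal subset of $K$ on which $\varphi$ takes the constant value $\max_{K}\varphi\ge\varphi(x_0)>\lambda_{i_0}$; by the previous paragraph $K_{\varphi}$ contains a point $p\in\Phi\textnormal{Ext}(K)$. But $p\in C$ forces $\varphi(p)\le\lambda_{i_0}$, contradicting $\varphi(p)=\max_{K}\varphi>\lambda_{i_0}$. Therefore $K=C$.

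\emph{The main obstacle.} Everything above is a faithful transcription of the Krein-Milman argument except for the assertion in $(ii)$ that, for a compact $G$, the maximum-set $G_{\varphi}$ is nonempty and closed. Hypothesis $(1)$ -- lower semicontinuity of the members of $\Phi$ on $K$ -- makes the sublevel sets $\{\varphi\le\lambda\}$ closed and forces $\varphi$ to attain its \emph{minimum} on a compact set, whereas the proof needs the \emph{maximum} to be attained; and because ``$\Phi$-convex'' and ``$\Phi$-between'' are both phrased with ``$\le$'', it is precisely the maximum-set, not the minimum-set, that is automatically $\Phi$-extremal. A merely lower semicontinuous $\varphi$ need neither attain its supremum on a compact set nor have a closed maximum-set, so bare hypothesis $(1)$ does not suffice: one must use whatever extra structure the ambient assumptions on $\Phi$ provide. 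The transparent case is when $\Phi$ is stable under negation (the convention used later in the paper): then each lower semicontinuous $\varphi\in\Phi$ is in fact continuous, $\max_{G}\varphi$ is attained, $\{x:\varphi(x)\ge\mu\}$ is closed, and $(ii)$ goes through verbatim. Pinning this down precisely -- reconciling the direction of semicontinuity in $(1)$ with the inequalities in the definitions -- is where I expect the real work, and the genuine role of the hypotheses on $\Phi$, to lie; the rest is bookkeeping with Zorn's lemma and the two stability facts.
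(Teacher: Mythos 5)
This theorem is not proved in the paper: it is quoted from Ky Fan's article \cite{Ky} as background, so there is no in-paper argument to compare yours with. Judged on its own, your skeleton is the right one and is the standard one: chains of nonempty compact $\Phi$-extremal sets have $\Phi$-extremal intersection, the maximum set $G_\varphi$ of a $\varphi\in\Phi$ over a $\Phi$-extremal $G$ is again $\Phi$-extremal, Zorn's lemma plus hypothesis $(2)$ collapses a minimal such set to a singleton, and the hull equality follows by picking a defining inequality $\varphi_{i_0}\le\lambda_{i_0}$ violated at $x_0$ and locating a $\Phi$-extremal point inside the maximum set of $\varphi_{i_0}$ over $K$. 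All of these steps are correct, and under the assumption that each $\varphi\in\Phi$ is \emph{continuous} on $K$ your proof is complete. That is also the only case the paper ever uses: Theorem \ref{Th1} assumes continuity on $K$ outright, and in the linear examples $\Phi$ is a vector space of functions continuous on $K$, so lower semicontinuity of every member forces continuity, exactly as you observe.

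The obstacle you flag at the end is not a technicality you merely failed to check; it is genuine, and with the paper's conventions the statement as transcribed is in fact false. Take $S=K=[0,1]$ and $\Phi=\{\varphi,\psi\}$ with $\psi(x)=-x$ and $\varphi(x)=x$ for $x<1$, $\varphi(1)=0$. Both functions are lower semicontinuous on $K$, $\psi$ separates points, and $K=S$ is $\Phi$-convex. From the definitions one checks that $\Phi\textnormal{Ext}(K)=\{0\}$: the point $1$ is $\Phi$-between $0$ and $0$ (the $\varphi$-implication holds since $\varphi(1)=\varphi(0)=0$, the $\psi$-implication is vacuous), and each $a\in(0,1)$ is $\Phi$-between $a-\varepsilon$ and $a+\varepsilon$ (both implications vacuous), while the $\psi$-implication forces any pair that $0$ lies $\Phi$-between to be $\{0,0\}$. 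On the other hand every sublevel set of $\varphi$ or $\psi$ containing $0$ contains $1$, so $\textnormal{conv}_\Phi(\Phi\textnormal{Ext}(K))=\{0,1\}\neq K$. The source of the failure is precisely the mismatch you identify: lower semicontinuity makes the sets $\{\varphi\le\lambda\}$ closed, but the face that is $\Phi$-extremal for the ``$\le$''-betweenness is the \emph{maximum} set, which for a merely lower semicontinuous function on a compact set can be empty (such a function need not even be bounded above). The discrepancy presumably enters through the sign change $\Psi=-\Phi$ when translating the conventions of \cite{Kh} and \cite{Ky}. In short, your argument proves the theorem in the form in which the paper actually needs and uses it, and you were right not to claim that bare hypothesis $(1)$ suffices; but as written the proposal does not (and cannot) establish the literal statement.
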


{\bf The main results of the paper.}
Recall that when $S$ is an $\textnormal{l.c.t}$ space and $C$ is a subset of $S$, we say that a point $x\in C$ is an exposed point of $C$, and write $x \in \textnormal{Exp}(C)$, if there exists some continuous linear functional $x^*\in S^*$ which attains its strict maximum over $C$ at $x$. Such a functional is then said to expose $C$ at $x$. Thus, an exposed point is a special sort of extreme point. If $S$ is a dual space, a weak$^*$ exposed point $x$ (we write $x\in w^*\textnormal{Exp}(C)$) is to simply an exposed point by a continuous functional from the predual. 
We introduce a general concept of $\Phi$-exposed points that coincides with the classical exposed points when $S$ is an $\textnormal{l.c.t}$ space and $\Phi=S^*$ and coincides with the weak$^*$ exposed points where $S=E^*$ is a dual space and $\Phi=E$.

\begin{Def}\label{Def3} Let $S$ be a Hausdorff space, $C$ a nonempty subset of $S$ and $\Phi$ a familly of real valued functions defined on $S$. We say that a point $x$ of $C$ is $\Phi$-exposed in $C$, and write $x\in \Phi\textnormal{Exp}(C)$, if there exists $\varphi\in \Phi$ such that $\varphi$ has a strict maximum on $C$ at $x$ i.e. $\varphi(x)>\varphi(y)$ for all $y\in C\setminus\lbrace x \rbrace$. Such $\varphi$ is then said to $\Phi$-expose $C$ at $x$. 
\end{Def} 
It is easy to see that $\Phi\textnormal{Exp}(C)\subseteq \Phi\textnormal{Ext}(C)$ but the converse is not true in general. The first main result of this paper is the following theorem. 

\begin{Thm} \label{Th1} Let $S$ be a Hausdorff space and $(\Phi,\|.\|_\Phi)$ be a Banach space of real valued functions defined on $S$. Let $K$ be a nonempty compact metrizable $\Phi$-convex subset of $S$ and suppose that:

$(1)$ the restriction of each $\varphi\in \Phi$ to $K$ is continuous, and there exists some real number $\alpha_K\geq 0$ such that $\alpha_K \|\varphi\|_\Phi \geq \sup_{x\in K}|\varphi(x)|$ for all $\varphi\in \Phi$;

$(2)$ $\Phi$ separate the points of $K$.

\noindent Then, we have that

$(i)$  $\Phi\textnormal{Exp}(K)\neq \emptyset$ and the set of all $\varphi\in \Phi$ that $\Phi$-expose $K$ at some point, contains a dense $G_\delta$ subset of $(\Phi,\|.\|_\Phi)$;

$(ii)$ $K=\textnormal{conv}_\Phi(\Phi\textnormal{Exp}(K)).$ 
\end{Thm}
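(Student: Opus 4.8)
The plan is to deduce both parts from a variational principle of Stegall--Ekeland type applied to the Banach space $(\Phi,\|\cdot\|_\Phi)$, using the metrizability of $K$ to obtain a separable Banach function space (or at least a subspace rich enough to detect exposed points). First I would observe that since $K$ is compact metrizable, the restriction map $\varphi\mapsto \varphi|_K$ sends $\Phi$ into $C(K)$, and by hypothesis $(1)$ this map is continuous with $\|\varphi|_K\|_\infty\le\alpha_K\|\varphi\|_\Phi$; moreover $C(K)$ is separable. The point $x\in K$ is $\Phi$-exposed by $\varphi$ precisely when $\varphi|_K$ attains a \emph{strong} maximum on $K$ at $x$ in the sense that any maximizing sequence converges to $x$ (here I use compactness plus continuity plus the fact that separation by $\Phi$ makes the relevant functional genuinely detect points). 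So part $(i)$ reduces to: the set of $\varphi\in\Phi$ whose restriction to $K$ attains a strong maximum is a dense $G_\delta$.

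For this I would run the classical argument for the generic well-posedness of the maximization problem. For $n\ge 1$ let $U_n$ be the set of $\varphi\in\Phi$ for which there is an open set $V\subseteq K$ of $\operatorname{diam}<1/n$ with $\sup_{K\setminus V}\varphi|_K < \sup_K\varphi|_K$. Each $U_n$ is open in $(\Phi,\|\cdot\|_\Phi)$ (a small $\|\cdot\|_\Phi$-perturbation moves $\varphi|_K$ by at most $\alpha_K\varepsilon$ in sup-norm, preserving a strict gap), and $\bigcap_n U_n$ is exactly the set of $\varphi$ attaining a strong (hence strict) maximum, using that $K$ is compact: the nested maximizing sets $\{\varphi|_K \ge \sup_K\varphi - \delta\}$ shrink to a single point. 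Density of each $U_n$ is the heart of the matter: given $\varphi_0\in\Phi$ and $\varepsilon>0$, I must produce $\varphi\in\Phi$ with $\|\varphi-\varphi_0\|_\Phi<\varepsilon$ lying in $U_n$. This is where I expect the main obstacle, because $\Phi$ is only an abstract Banach space of functions on $S$ with no a priori perturbation structure — unlike $C(K)$, one cannot simply add bump functions. The idea is to use a Stegall-type variational principle: since $K$ is a compact metrizable space, it has the Radon--Nikod\'ym property as (the weak-star closed convex hull of) a subset of the dual of the separable space $\Phi/\{\varphi:\varphi|_K\equiv 0\}$ — more precisely, embed $K$ into $\Phi^*$ via $x\mapsto(\varphi\mapsto\varphi(x))$, note the image is weak-star compact, and apply Stegall's principle that on an RNP set a dense $G_\delta$ of linear functionals strongly expose it. The embedding being injective (by separation, $(2)$) and a homeomorphism onto its image (compactness), strongly exposing functionals on the image pull back to elements of $\Phi$ that $\Phi$-expose $K$. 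This furnishes both the nonemptiness $\Phi\mathrm{Exp}(K)\ne\emptyset$ and the dense $G_\delta$ claim of $(i)$.

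For part $(ii)$ I would argue as in the classical Milman-type converse combined with Ky Fan's theorem. Let $F:=\operatorname{conv}_\Phi(\Phi\mathrm{Exp}(K))$; clearly $F\subseteq K$ since $K$ is $\Phi$-convex and contains $\Phi\mathrm{Exp}(K)$. For the reverse inclusion, suppose some $x_0\in K\setminus F$. Since $F$ is $\Phi$-convex, there exist $\psi\in\Phi$ and $\lambda\in\R$ with $\psi\le\lambda$ on $F$ but $\psi(x_0)>\lambda$. Now perturb: by part $(i)$, the $\Phi$-exposing functionals are dense, so I can find $\varphi$ close to $\psi$ in $\|\cdot\|_\Phi$ (hence uniformly close on $K$, by the $\alpha_K$ bound) that $\Phi$-exposes $K$ at some point $e$; choosing the perturbation small enough that $\varphi$ still strictly separates $x_0$ from $F$ in value, we get $\varphi(e)=\max_K\varphi \ge \varphi(x_0) > \sup_F \varphi \ge \varphi(e)$, a contradiction, since $e\in\Phi\mathrm{Exp}(K)\subseteq F$. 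Hence $K=F$. The one technical point to watch is that $\Phi$ being a vector space is used so that both the separating $\psi$ and the perturbation $\varphi-\psi$ stay in $\Phi$, and hypothesis $(1)$'s norm bound is exactly what converts $\|\cdot\|_\Phi$-closeness into the uniform closeness on $K$ needed to preserve the strict inequality.
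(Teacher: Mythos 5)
Your part (ii) is essentially the paper's argument (separate $k_0$ from the $\Phi$-convex hull using one of the inequalities $\varphi_{i_0}\le\lambda_{i_0}$ defining that hull, perturb $\varphi_{i_0}$ into an exposing functional via the density statement, and contradict), and your Baire-category skeleton for part (i) --- the open sets $U_n$, their openness via $\alpha_K\|\cdot\|_\Phi\ge\|\cdot\|_\infty$, the identification of strict and strong maxima on a compact metric space --- also matches the paper's general-case argument. The gap is in the one step you yourself call the heart of the matter: the density of the exposing functionals. You propose to obtain it from Stegall's variational principle by embedding $K$ into $\Phi^*$ via Dirac masses and invoking the Radon--Nikod\'ym property of (the weak$^*$ closed convex hull of) $\delta(K)$. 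That set need not have the RNP. Already for $S=K=[0,1]$ and $\Phi=C(K)$, a legitimate instance of the theorem, $\overline{\mathrm{conv}}^{w^*}(\delta(K))$ is the set of probability measures on $[0,1]$; it contains the set of nonnegative $L^1$-densities of integral $1$, which is a closed bounded convex non-dentable set (every density $f$ is the midpoint of $2f\cdot 1_A$ and $2f\cdot 1_{A^c}$ with $\int_A f=1/2$, both at $L^1$-distance $1$ from $f$), so the hull fails the RNP and the weak$^*$ form of Stegall's principle --- which is the form you need in order to produce exposing functionals from the predual $\Phi$ rather than from $\Phi^{**}$ --- is unavailable. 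Applying a non-convex form of Stegall to $\delta(K)$ itself does not help: it would give density in $\Phi^{**}$, and the reduction to the convex hull is exactly where RNP enters. A secondary unjustified point is the separability of $\Phi/\{\varphi:\varphi|_K\equiv 0\}$: the restrictions are $\|\cdot\|_\infty$-separable because $C(K)$ is, but not obviously separable for the quotient of $\|\cdot\|_\Phi$, and $\Phi$ itself is not assumed separable.

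The paper gets the density by a mechanism that deliberately avoids RNP. It first extracts a $\|\cdot\|_\Phi$-separable subspace $Z$ still separating the points of $K$ (Lemma \ref{Sep}), then considers the convex $1$-Lipschitz function $f^{\times}(\varphi)=\sup_{x\in K}(\varphi(x)-f(x))$ on $Z$: by Zajicek's theorem \cite{Za} this is G\^ateaux differentiable off countably many d.c.\ hypersurfaces in any separable Banach space (no RNP needed, because the differentiability is G\^ateaux, not Fr\'echet), and by the duality theorem of \cite{Ba} together with the property $P^G$ (Lemma \ref{LemmP}, which is where compactness and separation are used), G\^ateaux differentiability of $f^{\times}$ at $\varphi$ is equivalent to $f-\varphi$ having a strong minimum on $K$. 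Compactness of $K$ is precisely what upgrades G\^ateaux differentiability to strong attainment, so no dentability hypothesis ever appears. To repair your proof, replace the Stegall step by this G\^ateaux/Zajicek argument (or, for mere density, by Mazur's generic G\^ateaux differentiability theorem on the separable subspace $Z$) and then feed the resulting density into your sets $U_n$ exactly as you planned; the rest of your outline then goes through.
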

We give below some examples in the linear framework, where the above theorem can be applied (for more details see the corollaries of Subsection \ref{Aff-Ex}).

\begin{Exemp} Let $(E,\|.\|)$ be a Banach space, $S=(E,\textnormal{Weak})$ and $(\Phi,\|.\|_\Phi)= (E^*,\|.\|)$ the topological dual of $(E,\|.\|)$. In this case, a subset of $S$ is $\Phi$-convex iff it is convex and weak closed iff it is convex and norm closed (by Mazur's lemma on the coincidence of weak and norm closure of convex sets). The $\Phi$-convex hull of a set coincides with its weak closed convex hull which also coincides with its norm closed convex hull (by Mazur's lemma). The $\Phi$-exposed points coincides with the classical exposed points.
\end{Exemp}

\begin{Exemp} Let $(E,\|.\|)$ be a Banach space, $S=(E^*,\textnormal{Weak}^*)$ and let $(\Phi,\|.\|_\Phi)=(E,\|.\|)$. In this case, a subset of $S$ is $\Phi$-convex iff it is convex weak$^*$ closed and the $\Phi$-convex hull of a set, coincides with its weak$^*$ closed convex hull. The $\Phi$-exposed points coincides with the weak$^*$ exposed points.
\end{Exemp}

Note that Theorem \ref{Th1} fails in general for compact subsets that are not metrizable. For example, when $(E,\|.\|)=(l^1(\Gamma),\|.\|_1)$ ($\Gamma$ uncountable set), $S=(E^*,\textnormal{Weak}^*)$ and $(\Phi,\|.\|_\Phi)=(E,\|.\|)$, then, all of the hypothesis of Theorem \ref{Th1} are satisfed. However, for the not metrizable weak$^*$ compact subset $K=B_{E^*}$, we have that $\Phi\textnormal{Exp}(K)=w^*\textnormal{Exp}(K)=\emptyset$ (See Remark \ref{R4}).

\vskip5mm
The second main result of this paper is the following theorem. The space $(C(K),\|.\|_{\infty})$ denotes the Banach space of all real valued continuous functions on $K$ equiped with the sup-norm. Let $(\Phi,\|.\|_{\infty})$ be a closed Banach subspace of $(C(K),\|.\|_{\infty})$. By $B_{\Phi^*}$ we denote the dual unit ball of $(\Phi,\|.\|_{\infty})$. We also use the following notation: $$\pm\delta(\Phi\textnormal{Exp}(K)):=\lbrace \pm\delta_k/ k\in \Phi\textnormal{Exp}(K)\rbrace$$ where, for each $k\in \Phi\textnormal{Exp}(K)$, $\delta_k: \varphi \mapsto \varphi(k)$ for all $\varphi\in \Phi$.  
\begin{Thm} \label{Aff-Exp0} Let $K$ be a compact metric space and $(\Phi,\|.\|_{\infty})$ be a closed Banach subspace of $(C(K),\|.\|_{\infty})$ which separates the points of $K$ and contains the constants. Then, we have 
$$w^*\textnormal{Exp}(B_{\Phi^*})=\pm\delta(\Phi\textnormal{Exp}(K)),\hspace{2mm}$$ and 
$$B_{\Phi^*}=\overline{\textnormal{conv}}^{\textnormal{w}^*}(\pm\delta(\Phi\textnormal{Exp}(K))).$$
\end{Thm}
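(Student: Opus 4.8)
The plan is to deduce Theorem \ref{Aff-Exp0} from Theorem \ref{Th1} by setting up the right embedding of $K$ into $S = (\Phi^*, \textnormal{w}^*)$. Consider the evaluation map $\delta : K \to \Phi^*$, $k \mapsto \delta_k$ where $\delta_k(\varphi) = \varphi(k)$; since $\Phi$ separates points of $K$ and $\Phi \supseteq$ constants, $\delta$ is a homeomorphism from the compact metric space $K$ onto $\delta(K) \subseteq B_{\Phi^*}$, equipped with the weak$^*$ topology. The natural candidate for applying Theorem \ref{Th1} is the set $K' := \overline{\textnormal{conv}}^{\textnormal{w}^*}(\pm\delta(K)) \subseteq B_{\Phi^*}$, which is weak$^*$ compact (being a weak$^*$-closed subset of the ball) and, because $\Phi$ is \emph{separable} (a closed subspace of $C(K)$ for $K$ compact metric), also weak$^*$ metrizable. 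One takes $S = (\Phi^*, \textnormal{w}^*)$ and the Banach space of functions to be a copy of $\Phi$ acting on $S$ via $\varphi \mapsto (\Lambda \mapsto \Lambda(\varphi))$; hypothesis (1) of Theorem \ref{Th1} holds with $\alpha_{K'} = 1$ since $|\Lambda(\varphi)| \le \|\Lambda\| \|\varphi\|_\infty \le \|\varphi\|_\infty$ on $B_{\Phi^*}$, and (2) holds because $\Phi$ separates points of $\Phi^*$ by Hahn--Banach / bipolar considerations.

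First I would verify that $K'$ is $\Phi$-convex in the sense of the paper: it is an intersection of sets $\{\Lambda : \Lambda(\varphi) \le \lambda\}$, which is immediate since it is convex and weak$^*$-closed and $\Phi$ (viewed as functionals on $\Phi^*$) is exactly the relevant dual separating family. Then Theorem \ref{Th1} gives $K' = \textnormal{conv}_\Phi(\Phi\textnormal{Exp}(K'))$, and since on a convex weak$^*$-compact set the $\Phi$-convex hull coincides with the weak$^*$-closed convex hull, this reads $K' = \overline{\textnormal{conv}}^{\textnormal{w}^*}(w^*\textnormal{Exp}(K'))$. The crux is then the identification $w^*\textnormal{Exp}(K') = \pm\delta(\Phi\textnormal{Exp}(K))$, together with showing $K' = B_{\Phi^*}$.

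For the identification of exposed points: a point $\Lambda \in K'$ is weak$^*$-exposed by some $\varphi \in \Phi$ iff $\varphi(\Lambda) := \Lambda(\varphi) > \mu(\varphi)$ for all $\mu \in K' \setminus \{\Lambda\}$; in particular the strict maximum of the affine weak$^*$-continuous function $\mu \mapsto \mu(\varphi)$ over $K' = \overline{\textnormal{conv}}^{\textnormal{w}^*}(\pm\delta(K))$ must be attained at an extreme point, hence at a point of $\pm\delta(K)$ (by Milman's theorem, the extreme points of $K'$ lie in $\overline{\pm\delta(K)} = \pm\delta(K)$). If $\Lambda = \delta_k$, then $\Lambda(\varphi) = \varphi(k)$ being a strict max over $\pm\delta(K)$ forces $\varphi(k) > \varphi(k')$ for all $k' \ne k$ and $\varphi(k) > -\varphi(k')$ for all $k'$; the first condition says exactly $\varphi$ $\Phi$-exposes $K$ at $k$, so $k \in \Phi\textnormal{Exp}(K)$, and one checks (adding a large constant to $\varphi$, which stays in $\Phi$) that the sign condition can always be arranged without disturbing the strict maximum, giving $\delta_k \in w^*\textnormal{Exp}(K')$; the case $\Lambda = -\delta_k$ is symmetric using $-\varphi$. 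Conversely, if $k \in \Phi\textnormal{Exp}(K)$ via $\varphi$, then after translating by a constant so that $\varphi > 0$ on $K$ and $\varphi(k) > \sup_{k' \ne k}|\varphi(k')|$ (possible by compactness and continuity), $\varphi$ weak$^*$-exposes $K'$ at $\delta_k$, and $-\varphi$ at $-\delta_k$.

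The remaining point, $K' = B_{\Phi^*}$, is where I expect the main work: the inclusion $K' \subseteq B_{\Phi^*}$ is clear, and for the reverse one argues by contradiction via Hahn--Banach in $(\Phi^*, \textnormal{w}^*)$ --- if some $\Lambda \in B_{\Phi^*} \setminus K'$, separate it from the weak$^*$-compact convex $K'$ by a weak$^*$-continuous functional, i.e. by some $\varphi \in \Phi$, getting $\Lambda(\varphi) > \sup_{\mu \in K'} \mu(\varphi) \ge \sup_{k \in K} |\varphi(k)| = \|\varphi\|_\infty$, contradicting $\|\Lambda\| \le 1$. (Here the sup over $K'$ of $\mu \mapsto \mu(\varphi)$ equals $\sup_{\pm \delta(K)} = \sup_{k}|\varphi(k)| = \|\varphi\|_\infty$ since $\Phi \subseteq C(K)$.) Combining, $B_{\Phi^*} = K' = \overline{\textnormal{conv}}^{\textnormal{w}^*}(w^*\textnormal{Exp}(B_{\Phi^*})) = \overline{\textnormal{conv}}^{\textnormal{w}^*}(\pm\delta(\Phi\textnormal{Exp}(K)))$, which is the claim. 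The only genuinely delicate bookkeeping is keeping track of the constant translations so that a $\Phi$-exposing function for $K$ produces a genuinely \emph{strict} weak$^*$-maximum over all of $K'$ (not merely over $\pm\delta(K)$), which follows because a weak$^*$-continuous affine function attaining its max over $K'$ at an extreme point of $\pm\delta(K)$ attains it strictly there once it is strict on the generating set.
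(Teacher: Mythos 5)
Your proof is correct in substance, and for the identity $w^*\textnormal{Exp}(B_{\Phi^*})=\pm\delta(\Phi\textnormal{Exp}(K))$ it takes a genuinely different route from the paper. The paper proves this identity as a separate lemma using differentiability machinery: by [Proposition 6.9, Phelps], $Q$ is weak$^*$ exposed by $\varphi$ iff $\|\cdot\|_\infty$ is G\^ateaux differentiable at $\varphi$ with derivative $Q$; it then writes $\|\psi\|_\infty=\max(0^\times(\psi),0^\times(-\psi))$, invokes a lemma on differentiability of a max of convex functions, and uses the Fenchel-type duality of [Theorem 2.8, Ba] to convert G\^ateaux differentiability of $0^{\times}$ into a strong maximum of $\pm\varphi$ on $K$. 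You instead work entirely inside $(\Phi^*,\textnormal{w}^*)$: you identify $B_{\Phi^*}=\overline{\textnormal{conv}}^{\textnormal{w}^*}(\pm\delta(K))$ by Hahn--Banach (correct, and this is exactly where ``contains the constants'' is \emph{not} even needed, only $\|\varphi\|_\infty=\sup_K|\varphi|$), then localize exposed points in $\pm\delta(K)$ via Milman's partial converse, and go back up via a face/Krein--Milman argument. This is more elementary and self-contained; what the paper's route buys is the differentiability dictionary itself (used again for the Shilov-boundary corollary), whereas your route isolates the purely convex-geometric content. For the second displayed identity both you and the paper do the same thing: $\Phi$ is separable, so $B_{\Phi^*}$ is weak$^*$ compact metrizable and Theorem \ref{Th1} (in the form of Corollary \ref{GKlee}(1)) applies.

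One small imprecision to fix in the converse direction: after translating so that $\psi:=\varphi+r>0$ on $K$, you claim $\psi(k)>\sup_{k'\neq k}|\psi(k')|$. This is generally false when $k$ is not isolated in $K$, since then $\sup_{k'\neq k}\psi(k')=\psi(k)$ by continuity. What you actually have, and what suffices, is the pointwise strict inequality $\psi(k)>|\psi(k')|$ for each individual $k'\neq k$ together with $\psi(k)>0$; the passage from strictness on the generating set $\pm\delta(K)$ to a strict maximum over all of $B_{\Phi^*}$ is then exactly your closing observation: the face $F=\{\mu\in B_{\Phi^*}:\mu(\psi)=\|\psi\|_\infty\}$ is weak$^*$ compact convex, its extreme points are extreme points of $B_{\Phi^*}$ and hence lie in $\pm\delta(K)$ by Milman, the only such point in $F$ is $\delta_k$, so $F=\{\delta_k\}$ by Krein--Milman. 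Since that argument is the one doing the work, the erroneous uniform inequality can simply be deleted; the proof stands.
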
 

Note that in Theorem \ref{Aff-Exp0}, the concept of $\Phi$-exposed points of $K$ appears in a natural way in the description of the weak$^*$ exposed points of $B_{\Phi^*}$. As consequence, we deduce, under the hypothesis of Theorem \ref{Aff-Exp0}, that the set $\Phi\textnormal{Exp}(K)$ of all $\Phi$-exposed points of $K$, is a dense subset of the Shilov boundary $\partial \Phi$ of $\Phi$ i.e. $\partial \Phi=\overline{\Phi\textnormal{Exp}(K)}$ (Corollary \ref{Shilov}).  
\vskip5mm 
The results of this paper are based on the following version of variational principle in the compact metric framework (Lemma \ref{MBAD} in Section \ref{Var-Princ}). This analogous of the Deville-Godefroy-Zizler variational principle \cite{DGZ}, also gives a new information about the set of {\it "ill-posed problems"} on compact metric sets. Several others consequences are obtained (the details are given in Section \ref{Var-Princ}).

{\bf A key lemma.}
{\it Let $K$ be a compact metric space and $(Y,\|.\|_Y)$ be a Banach space included in $C(K)$ which separates the points of $K$ and such that $\alpha\|.\|_Y\geq \|.\|_{\infty}$ for some real number $\alpha\geq 0$. Let $f : K\rightarrow \R\cup \left\{+\infty \right\}$ be a proper lower semi-continuous function. Then, the set 
$$N(f)=\left\{\varphi  \in Y : f-\varphi \textnormal { does not have a strict minimum on } K \right\}$$
is of the first Baire category in $Y$. If moreover $Y$ is separable then $N(f)$ can be covered by countably many $d.c.$ hypersurface in $Y$.}
\vskip5mm
In a separable Banach space $Y$, each set $N$ which can be covered by countably many $dc$ $hypersurface$ is $\sigma$-lower porous, also $\sigma$-directionally porous; in particular it is both Aronszajn (equivalent to Gauss) null and $\Gamma$-null. For more details about "small sets", see \cite{Za1} and references therein.
\vskip5mm
This paper is organized as follows. In Section \ref{Var-Princ}, we prove the key lemma (Lemma \ref{MBAD}) and give several consequences.  In Section \ref{The-main} we give the proofs of the main results (Theorem \ref{Th1} and Theorem \ref{Aff-Exp0}) and applications. In Section \ref{Apendix} we give some additional properties and remarks about $\Phi$-exposed points and the Krein-Milman theorem.

\section{Variational principle and consequences.} \label{Var-Princ}
Let $(M,d)$ be a complete metric space and $f : M \longrightarrow \R\cup \left\{+\infty \right\}$ be an
extended real-valued function which is bounded from below and proper. By the term proper we mean that the domain of $f$, $dom(f):=\lbrace x\in M/ f(x) < +\infty\rbrace$ is non-empty. We say that $f$ has a strong minimum at $x$ if $\inf_M f=f(x)$ and $d(x_n,x)\rightarrow 0$ whenever $f(x_n)\rightarrow f(x)$.
The problem to find a strong minimum for $f$, is called {\it Tykhonov well-posed-problem}. Let $(C_b(M), \|.\|_{\infty})$ be the space of all real-valued bounded and continuous functions on $M$, equipped with the sup-norm and let $(Y,\|.\|_Y)$ be a Banach space included in $C_b(M)$. Let 
$$N(f)=\left\{\varphi  \in Y : f-\varphi \textnormal { does not have a strong minimum on }\hspace{1mm} M \right\}.$$
The set $N(f)$ is called the set of {\it "ill-posed problems"}. The problem is to find conditions on $Y$ under which the set $N(f)$ is a {\it "small"} set. In \cite{DGZ}, Deville, Godefroy and Zizler proved that the set $N(f)$ is of first Baire category in $Y$, and in \cite{DR}, Deville and Rivalski generalize the result of Deville-Godefroy-Zizler (D-G-Z), where they showed that the set $N(f)$ is $\sigma$-porous in Y, whenever $f$ is bounded from below, proper and lower semi continuous and $Y$ satisfies the following conditions:

$(i)$ $\|g\|\geq \|g\|_{\infty}$, for all $g\in Y$;

$(ii)$ for every natural number $n$, there exists a positive constant $M_n$ such that for
any point $x \in M$ there exists a function $h_n : M \longrightarrow [0; 1]$, such that $h_n \in Y$ ,
$\|h_n\|\leq M_n$, $h_n(x) = 1$ and $diam(supp (h)) < \frac 1 n.$




\vskip5mm

The D-G-Z variational principle has several applications in particular in optimization and in the geometry of Banach spaces and can be applied without compactness assumption. However, the assumption $(ii)$ in the above result is crutial and so the D-G-Z variational principle cannot includes the linear case, like the Stegall's variational principle. Of course, the interest in the D-G-Z variational principle, is to circumvent the compactness, but in our purpose ( in connection with the Krein-Milman theorem), we need only to treat the compact framework. Thus, we prove in the key Lemma \ref{MBAD} that when we assume that $(K,d)$ is compact metric space, the condition $(ii)$ can be omitted. This allows to expand the class $Y$ to a class of functions including the linear cases. Moreover, if $(Y,\|.\|_Y)$ is a separable Banach space included in $C(K)$ and separate the points of $K$, then the set $N(f)$ can be more smaller than $\sigma$-porous. In fact we prove that in this situation, the set $N(f)$ can be covered by countably many $d.c.$ $hypersurface$ (See the definitions below). This gives, in particular, examples showing that the $\sigma$-porousity of the {\it "ill-posed problems"} in \cite{DR}, is not optimal. Our version of variational principle has several consequences, in particular it allows to give the proofs of the main results of the paper. The proof of Lemma \ref{MBAD}, is based on the use of a differentiability result of convex continuous functions on a separable Banach spaces due to Zajicek \cite{Za} and a non convex analogue to Fenchel duality introduced in \cite{Ba}.  

\vskip5mm

We recall from \cite{Za1} the following definitions.
 \begin{Def} Let $Y$, $Z$ be Banach spaces, $C\subset Y$ an open convex set, and
 $F : C \rightarrow Z$ a continuous mapping. We say that $F$ is d.c. (that is, delta-convex) if there exists a continuous
 convex function $f : C \rightarrow \R$ such that $y^*\circ F+f$ is convex whenever $y^*\in Y^*$, $\|y^*\|\leq 1$.
 \end{Def} 

 \begin{Def} Let $Y$ be a Banach space and $n\in \N^*$, $1 \leq n < dimY$. We say
 that $A \subset X$ is a $d.c.$ $surface$ of codimension $n$ if there exist an $n$-dimensional linear
 space $F \subset X$, its topological complement $E$ and a d.c. mapping
 $\varphi : E \rightarrow F$ such that $A = \left\{x + \varphi(x) : x \in E\right\}$. A d.c. surface of codimension $1$
 will be called a $d.c.$ $hypersurface$.
 \end{Def}

\subsection{Preliminary lemmas.}

 If $(Y,\|.\|_Y)$ is a Banach space included in $C_b(M)$ with $\alpha\|.\|\geq \|.\|_{\infty}$ for some real number $\alpha\geq 0$ and $x\in M$, we denote by $\delta_x$ the evaluation map (Dirac mass) on $Y$ at $x$ i.e. $\delta_x: \varphi \longrightarrow \varphi(x)$, for all $\varphi\in Y$. The map $\delta_x$ is a linear continuous functional on $Y$ since $\alpha\|.\|\geq \|.\|_{\infty}$. We recall the following definition from \cite{Ba}.

\begin{Def} Let $(M,d)$ be a complete metric space and $(Y,\|.\|_Y)$ be a Banach space included in $C_b(M)$ with $\alpha\|.\|\geq \|.\|_{\infty}$ for some real number $\alpha\geq 0$. We say that the space $Y$ has the property $P^G$ if and only if, for every sequence $(x_n)_n\subset M$, the following assertions are equivalent:
\item $(i)$ the sequence $(x_n)_n$ converges in $(M,d)$,
\item $(ii)$ the associated sequence of the Dirac masses $(\delta_{x_n})_n$ converges in $(Y^*, \textnormal{Weak}^*)$.
\end{Def}
The letter $G$ in $P^G$ is justified by the fact that the G\^{a}teaux bornology, the G\^{a}teaux differentiability and the weak$^*$ topology has some connection between them. We refer to \cite{Ba} for more details. The space $C_b(M)$, the subspace $C_b^u(M)$ of uniformly continuous functions and the space $Lip_b(M)$ of all bounded and Lipschitz continuous functions (equipped with their natural norms), satisfies the property $P^G$ for any complete metric space $(M,d)$ (see [Proposition 2.6, \cite{Ba}]). The following Lemma shows that, in the compact metric framework, the property $P^G$ is satisfied for a large class of function spaces.

\begin{Lem} \label{LemmP} Let $(K,d)$ be a compact metric space and $(Y,\|.\|_Y)$ be a Banach space included in $C(K)$, which separates the points of $K$ and such that $\alpha\|.\|\geq \|.\|_{\infty}$ for some real number $\alpha\geq 0$. Then, $Y$ has the property $P^G$.
\end{Lem}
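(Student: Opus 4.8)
The plan is to verify the equivalence of (i) and (ii) in the definition of property $P^G$ for the space $Y$, using compactness of $K$ in an essential way. The implication $(i)\Rightarrow(ii)$ is the easy direction and holds in general: if $x_n\to x$ in $(K,d)$, then for every $\varphi\in Y\subseteq C(K)$ we have $\varphi(x_n)\to\varphi(x)$ by continuity, which is exactly the statement that $\delta_{x_n}\to\delta_x$ in $(Y^*,\textnormal{Weak}^*)$; note that the uniform bound $\|\delta_{x_n}\|\leq\alpha$ coming from $\alpha\|.\|_Y\geq\|.\|_\infty$ guarantees the evaluation functionals are genuinely elements of $Y^*$, so the weak$^*$ convergence makes sense.

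For the harder implication $(ii)\Rightarrow(i)$, suppose $(\delta_{x_n})_n$ converges weak$^*$ in $Y^*$ to some functional $\Lambda$. I would argue by contradiction: if $(x_n)_n$ does not converge in $(K,d)$, then since $K$ is compact metric, $(x_n)_n$ has at least two subsequences converging to distinct limits $a\neq b$ in $K$ (either two convergent subsequences with different limits, or after passing to a convergent subsequence one still finds the whole sequence does not converge to that limit, hence a further subsequence staying away from it, which by compactness has a convergent sub-subsequence with a different limit). Along the first subsequence, by the already-proven $(i)\Rightarrow(ii)$ direction, $\delta_{x_{n_k}}\to\delta_a$ weak$^*$; along the second, $\delta_{x_{m_k}}\to\delta_b$ weak$^*$. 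But both subsequences of $(\delta_{x_n})_n$ must converge to the same limit $\Lambda$, whence $\delta_a=\delta_b$ as elements of $Y^*$, i.e.\ $\varphi(a)=\varphi(b)$ for all $\varphi\in Y$. This contradicts the hypothesis that $Y$ separates the points of $K$, since $a\neq b$. Therefore $(x_n)_n$ converges in $(K,d)$, completing the proof.

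The main obstacle, and the only place compactness is truly needed, is the extraction step: without compactness of $K$ one cannot guarantee that a non-convergent sequence admits two convergent subsequences with distinct limits, so the separation hypothesis alone would not suffice to force convergence. Everything else is a routine combination of continuity of the elements of $Y$, the uniform boundedness of the Dirac masses provided by $\alpha\|.\|_Y\geq\|.\|_\infty$, and the point-separation hypothesis. I would also remark in passing that the limit functional $\Lambda$ is automatically of the form $\delta_x$ for the limit point $x$ of $(x_n)_n$, though this is not needed for the statement as phrased.
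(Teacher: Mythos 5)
Your proof is correct and follows essentially the same route as the paper: the easy direction is continuity of the elements of $Y$, and the hard direction uses compactness of $(K,d)$ to extract convergent subsequences with distinct limits, the Hausdorff property of the weak$^*$ topology to force $\delta_a=\delta_b$, and the separation hypothesis to conclude. The paper phrases this as ``the sequence has a unique cluster point, hence converges'' rather than as a direct contradiction, but the argument is the same.
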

 
\begin{proof}  If $(x_n)_n$ is a sequence of $K$ that converges to some point $x$ in $(K,d)$, it is clear that $(\delta_{x_n})_n$ converge to $\delta_x$ for the weak$^*$ topology. Suppose now that $(\delta_{x_n})_n$ converge to some point $Q$ in $Y^*$ for the weak$^*$ topology. We prove that the sequence $(x_n)_n$ converge in $(K,d)$. Indeed, suppose that $l_1$ and $l_2$ are two distinct cluster point of $(x_n)_n$. There exists two subsequences $(y_n)_n$ and $(z_n)_n$ such that $(y_n)_n$ converge to $l_1$ and $(z_n)_n$ converge to $l_2$ . Since $(\delta_{x_n})_n$ converge to $Q$ and $(Y^*,\textnormal{Weak}^*)$ is a Hausdorff space, it follows that $\delta_{l_1}=Q=\delta_{l_2}$ which is a contradiction since $Y$ separate the points of $K$. So, the sequence $(x_n)_n$ has a unique cluster point, and hence it converges to some point since $K$ is a compact metric space.
\end{proof}

Now, if we are interested on the property $P^G$ for separable Banach spaces $(Y,\|.\|_Y)$ included in $C_b(M)$, the following proposition shows that, this situation holds only when $M$ is compact. In fact, this characterizes the compact metric sets.  

\begin{Prop} \label{PropP1} Let $(K,d)$ be a complete metric space and $(Y,\|.\|_Y)$ be a separable Banach space included in $C(K)$, which separate the points of $K$ and such that $\alpha\|.\|\geq \|.\|_{\infty}$ for some real number $\alpha\geq 0$. Then, the following assertions are equivalent.

$(1)$ $K$ is compact.

$(2)$ $Y$ has the property $P^G$.
\end{Prop}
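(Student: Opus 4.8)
The implication $(1)\Rightarrow(2)$ is exactly Lemma \ref{LemmP}, so nothing new is needed there. The plan is to prove the contrapositive of $(2)\Rightarrow(1)$: assuming $K$ is not compact, I would construct a sequence in $K$ that diverges in $(K,d)$ but whose associated Dirac masses converge weak$^*$ in $Y^*$, thereby violating property $P^G$. Since $(K,d)$ is a complete metric space which is not compact, it is not totally bounded, so there exist $\varepsilon>0$ and a sequence $(x_n)_n$ in $K$ with $d(x_n,x_m)\ge\varepsilon$ for all $n\ne m$; in particular $(x_n)_n$ has no convergent subsequence, so it certainly does not converge in $(K,d)$.

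The heart of the argument is to pass to a subsequence along which $(\delta_{x_n})_n$ converges weak$^*$. Here I would use separability of $(Y,\|.\|_Y)$ together with the bound $\|\delta_{x_n}\|_{Y^*}\le\alpha$ (which holds because $\alpha\|.\|_Y\ge\|.\|_\infty$, so $|\delta_{x_n}(\varphi)|=|\varphi(x_n)|\le\|\varphi\|_\infty\le\alpha\|\varphi\|_Y$). The sequence $(\delta_{x_n})_n$ lies in the ball $\alpha B_{Y^*}$, which, since $Y$ is separable, is weak$^*$ metrizable and weak$^*$ compact by Banach--Alaoglu; hence $(\delta_{x_n})_n$ admits a weak$^*$ convergent subsequence, say $\delta_{x_{n_k}}\to Q$ in $(Y^*,\mathrm{Weak}^*)$. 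This subsequence still has pairwise distances $\ge\varepsilon$, so it does not converge in $(K,d)$. Thus $(ii)$ of property $P^G$ holds for $(x_{n_k})_k$ while $(i)$ fails, so $Y$ does not have property $P^G$. This proves $\lnot(1)\Rightarrow\lnot(2)$, i.e. $(2)\Rightarrow(1)$.

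The main (minor) obstacle is simply making sure the hypotheses are used correctly: separability of $Y$ is what gives weak$^*$ metrizability of bounded sets in $Y^*$ (equivalently, sequential weak$^*$ compactness of $\alpha B_{Y^*}$), and the inequality $\alpha\|.\|_Y\ge\|.\|_\infty$ is what makes each $\delta_x$ a bounded functional of norm at most $\alpha$ so that the Dirac masses all live in one fixed weak$^*$-compact ball. Note that the separation hypothesis on $Y$ is not actually needed for the direction $(2)\Rightarrow(1)$ — it is only used in Lemma \ref{LemmP} for $(1)\Rightarrow(2)$ — so I would remark that the construction above works for any separable $Y\subseteq C(K)$ with $\alpha\|.\|_Y\ge\|.\|_\infty$. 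Everything else is routine, and no heavy machinery beyond Banach--Alaoglu and the metrizability of weak$^*$-compact sets in duals of separable spaces is required.
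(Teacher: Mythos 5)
Your proof is correct, but it takes a genuinely different route from the paper for the direction $(2)\Rightarrow(1)$. The paper argues directly: it uses property $P^G$ to show that $\delta(K)$ is weak$^*$ closed in the (compact, metrizable, by separability of $Y$ and Banach--Alaoglu) dual ball, hence weak$^*$ compact, and then uses the separation hypothesis to get that $\delta:(K,d)\to(\delta(K),\textnormal{Weak}^*)$ is a continuous bijection which --- again via $P^G$ and metrizability, which make $\delta^{-1}$ sequentially continuous --- is a homeomorphism, so $K$ inherits compactness. You instead prove the contrapositive: if $K$ is complete but not compact it is not totally bounded, so you extract an $\varepsilon$-separated sequence; the associated Dirac masses sit in $\alpha B_{Y^*}$, which is weak$^*$ sequentially compact by separability, so some subsequence of Dirac masses converges weak$^*$ while the underlying points cannot converge in $(K,d)$, violating $P^G$. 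Both arguments rest on the same two ingredients (the bound $\|\delta_x\|_{Y^*}\le\alpha$ and the weak$^*$ metrizability/compactness of bounded sets in $Y^*$ coming from separability of $Y$), but your version is slightly more economical: it sidesteps the delicate point in the paper's phrasing (a continuous bijection \emph{onto} a compact space is not automatically a homeomorphism; one must invoke $P^G$ to continuify $\delta^{-1}$), and, as you correctly observe, it does not use the hypothesis that $Y$ separates the points of $K$, which the paper's argument needs to make $\delta$ injective. That observation is a genuine small improvement in the statement of this implication.
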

 
\begin{proof} The part $(1)\Longrightarrow (2)$ is given by Lemma \ref{LemmP}. Let us prove the part $(2)\Longrightarrow (1)$. Indeed, Since $Y$ is separable, by the Banach-Alaoglu theorem, the dual unit ball $B_{Y^*}$ is a compact metrizable space. Let us denotes $\delta(K):=\lbrace \delta_k: k\in K \rbrace$ and consider the map: 
\begin{eqnarray} 
\delta : (K,d) &\rightarrow& (\delta(K),\textnormal{Weak}^*)\nonumber\\
             x &\mapsto& \delta_x\nonumber
\end{eqnarray}
Since $Y$ has the property $P^G$, it follows that $(\delta(K),\textnormal{Weak}^*)$ is a closed subspace of the compact metrizable set $(B_{Y^*},\textnormal{Weak}^*)$. Therefore, $(\delta(K),\textnormal{Weak}^*)$ is a Hausdorff compact space. Since $Y$ separate the points of $K$, the map $\delta$ is one-to-one. Consequently, $ \delta : (K,d) \rightarrow (\delta(X),\textnormal{Weak}^*)$ is a continuous and bijective map from $(K,d)$ onto the compact space $(\delta(K),\textnormal{Weak}^*)$, it is then an homeomorphism which implies that $(K,d)$ is a compact space.

\end{proof}



We also need the following lemma. 
\begin{Lem} \label{Sep} Let $K$ be a compact metric space and $(Y,\|.\|_Y)$ be a Banach space included in $C(K)$. Suppose that $Y$ separate the points of $K$, then there exists a sequence $(\varphi_n)_n\subset Y$ which separates the points of $K$, in particular, there exists a separable Banach subspace $Z$ of $(Y,\|.\|_Y)$ which also separates the points of $K$. 
\end{Lem}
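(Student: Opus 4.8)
The plan is to produce first a countable sequence $(\varphi_n)_n$ in $Y$ that separates the points of $K$, and then to take $Z$ to be the closed linear span of $\{\varphi_n : n\in\N\}$ in $(Y,\|.\|_Y)$.

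For the sequence, I would exploit the separability of $(C(K),\|.\|_{\infty})$. Since $K$ is a compact metric space, $(C(K),\|.\|_{\infty})$ is separable (for instance, if $\{x_k\}_k$ is dense in $K$, the unital subalgebra generated with rational coefficients by the functions $x\mapsto d(x,x_k)$ is $\|.\|_{\infty}$-dense in $C(K)$ by the Stone--Weierstrass theorem). Hence $Y$, being a subset of the separable metric space $(C(K),\|.\|_{\infty})$, is itself separable for the sup-norm; let $(\varphi_n)_n\subset Y$ be $\|.\|_{\infty}$-dense in $Y$. This sequence separates the points of $K$: given $x\neq y$ in $K$, pick by hypothesis $\varphi\in Y$ with $\varepsilon:=|\varphi(x)-\varphi(y)|>0$, and then $n$ with $\|\varphi_n-\varphi\|_{\infty}<\varepsilon/3$; the triangle inequality gives
$$|\varphi_n(x)-\varphi_n(y)|\geq |\varphi(x)-\varphi(y)|-|\varphi_n(x)-\varphi(x)|-|\varphi_n(y)-\varphi(y)|>\varepsilon-\frac{\varepsilon}{3}-\frac{\varepsilon}{3}=\frac{\varepsilon}{3}>0,$$
so $\varphi_n(x)\neq\varphi_n(y)$.

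Finally I would set $Z:=\overline{\textnormal{span}}^{\,\|.\|_Y}\{\varphi_n : n\in\N\}$, the closure being taken in $(Y,\|.\|_Y)$. As a closed subspace of the Banach space $(Y,\|.\|_Y)$, $Z$ is itself a Banach space; it is separable, since the countable set of finite rational linear combinations of the $\varphi_n$ is $\|.\|_Y$-dense in $Z$; and since $\varphi_n\in Z$ for every $n$, the space $Z$ separates the points of $K$. There is essentially no obstacle in this argument; the only point to watch is that one cannot invoke separability of $(Y,\|.\|_Y)$ itself (which is not assumed), so the separability of $(C(K),\|.\|_{\infty})$ has to be the source of the countable separating family. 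Alternatively, one may avoid $C(K)$ altogether: equip $K\times K$ with the product metric, note that it is second countable, hence hereditarily Lindel\"of; then cover the open set $(K\times K)\setminus\Delta$ (with $\Delta$ the diagonal) by the open sets $U_\varphi:=\{(x,y): \varphi(x)\neq\varphi(y)\}$, $\varphi\in Y$, and extract a countable subcover $(U_{\varphi_n})_n$. The resulting $(\varphi_n)_n$ separates the points of $K$, and one concludes exactly as before.
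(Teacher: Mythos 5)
Your argument is correct and is essentially the paper's own proof: both use the separability of $(C(K),\|.\|_{\infty})$ to extract a $\|.\|_{\infty}$-dense sequence in $Y$, check that it still separates points by uniform approximation, and then take $Z$ to be the $\|.\|_Y$-closed linear span of that sequence. Your extra Lindel\"of-based alternative is a valid variant, but the main line of reasoning coincides with the paper's.
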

\begin{proof} Since $K$ is a compact metric space, then $(C(K), \|.\|_{\infty})$ is a separable Banach space. Since $Y$ is a subspace of $C(K)$, it is also $\|.\|_{\infty}$-separable. Thus, there exists a sequence $(\varphi_n)_n \subseteq Y$ which is dense in $Y$ for the norm $\|.\|_{\infty}$. Since $Y$ separate the points of $K$, if $x,y \in K$ are such that $x\neq y$, then there exists $\varphi\in Y$ such that $\varphi(x)\neq \varphi(y)$. Using the uniform convergence of a subsequence of $(\varphi_n)_n$ to $\varphi$, we get that the sequence $(\varphi_n)_n$ also separate the points of $K$. Now, if we set $Z=\overline{span}^{\|.\|_Y}\lbrace \varphi_n: n\in \N\rbrace$, then $Z$ is a separable Banach subspace of $(Y,\|.\|_Y)$ that separates the points of $K$.  
\end{proof}

\subsection{The key lemma.}

Recall that in \cite{Za}, Zajicek proved that in a separable Banach space, the set $NG(F)$ of the points where a convex continuous function $F$ is not G\^{a}teaux differentiable, can be covered by countably many $d.c.$ $hypersurface$. This result together with a duality result from cite{Ba}, will be used in the proof of Lemma \ref{MBAD}.

\begin{Lem} \label{MBAD} Let $K$ be a compact metric space and $(Y,\|.\|_Y)$ be a Banach space included in $C(K)$ which separates the points of $K$ and such that $\alpha\|.\|_Y\geq \|.\|_{\infty}$ for some real number $\alpha\geq 0$. Let $f : K\rightarrow \R\cup \left\{+\infty \right\}$ be a proper lower semi-continuous function. Then, the set 
$$N(f)=\left\{\varphi  \in Y : f-\varphi \textnormal { does not have a strong minimum on } K \right\}$$
is of the first Baire category in $Y$. Moreover, for each separable Banach subspace $Z$ of $(Y,\|.\|_Y)$ which separates the points of $K$, we have that $N(f)\cap Z$ can be covered by countably many $d.c.$ hypersurface in $Z$. If particular, if $Y$ is separable then $N(f)$ can be covered by countably many $d.c.$ hypersurface in $Y$.
 \end{Lem}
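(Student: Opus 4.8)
The plan is to realize $N(f)$ as the set of points of non-Gâteaux-differentiability of a single continuous convex function on $Y$ — the non-convex Fenchel conjugate of $f$ in the sense of \cite{Ba} — and then to invoke Zajíček's theorem on such sets, handling a non-separable $Y$ by reduction to its separable subspaces through Lemma \ref{Sep}. First I would set, for $\varphi\in Y$,
$$F(\varphi):=\sup_{x\in K}\bigl(\varphi(x)-f(x)\bigr)=\sup_{x\in K}\bigl(\delta_x(\varphi)-f(x)\bigr).$$
Since $f$ is proper and lower semicontinuous on the compact $K$, it is bounded below and has nonempty domain, and since $\alpha\|\cdot\|_Y\geq\|\cdot\|_\infty$ each evaluation $\delta_x$ is a continuous linear functional on $Y$. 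Hence $F$ is finite; it is convex and lower semicontinuous, being a supremum of the continuous affine functions $\varphi\mapsto\delta_x(\varphi)-f(x)$; and $|F(\varphi)-F(\psi)|\leq\|\varphi-\psi\|_\infty\leq\alpha\|\varphi-\psi\|_Y$, so $F$ is Lipschitz, in particular continuous. For $\varphi\in Y$ let $M(\varphi):=\{x\in K:\varphi(x)-f(x)=F(\varphi)\}$; by upper semicontinuity of $\varphi-f$ and compactness of $K$ this is a nonempty closed set, and it is exactly the set of minimizers of $f-\varphi$ on $K$.

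The heart of the argument is the identity $N(f)=NG(F)$. One inclusion: for every $x_0\in M(\varphi)$ and every $\psi\in Y$ one has $F(\psi)\geq\delta_{x_0}(\psi)-f(x_0)=F(\varphi)+\delta_{x_0}(\psi-\varphi)$, so $\delta_{x_0}\in\partial F(\varphi)$; since $F$ is continuous convex, Gâteaux differentiability at $\varphi$ is equivalent to $\partial F(\varphi)$ being a singleton, and then $\delta_{x_0}=\delta_{x_1}$ for all $x_0,x_1\in M(\varphi)$, forcing $M(\varphi)=\{x_\varphi\}$ because $Y$ separates the points of $K$; a routine compactness-plus-lower-semicontinuity argument then upgrades ``$x_\varphi$ is the unique minimizer of $f-\varphi$'' to ``$f-\varphi$ has a strong minimum at $x_\varphi$''. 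Conversely, if $f-\varphi$ has a strong minimum at $x_\varphi$, then for $g\in Y$ and $x_t\in M(\varphi+tg)$ one checks $g(x_\varphi)\leq t^{-1}\bigl(F(\varphi+tg)-F(\varphi)\bigr)\leq g(x_t)$ for $t>0$ (and the symmetric bound for $t<0$); since $(f-\varphi)(x_t)\to\inf_K(f-\varphi)$ as $t\to 0$, the strong-minimum property forces $x_t\to x_\varphi$, hence $g(x_t)\to g(x_\varphi)$, and $F$ is Gâteaux differentiable at $\varphi$ with derivative $\delta_{x_\varphi}$. The same reasoning applied to the restriction $F|_Z$ (again continuous convex) gives $N(f)\cap Z=NG(F|_Z)$ for every Banach subspace $Z\subseteq Y$ separating the points of $K$.

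It remains to draw the conclusions. If $Z$ is a \emph{separable} Banach subspace of $Y$ separating the points of $K$, Zajíček's theorem \cite{Za} yields that $NG(F|_Z)=N(f)\cap Z$ is covered by countably many $d.c.$ hypersurfaces in $Z$; taking $Z=Y$ settles the separable case. For the first Baire category statement for arbitrary $Y$, I would write $N(f)=\bigcup_{n\geq 1}A_n$ with $A_n:=\{\varphi\in Y:\textnormal{diam}\,S_\epsilon(\varphi)\geq 1/n\ \textnormal{for all }\epsilon>0\}$ and $S_\epsilon(\varphi):=\{x\in K:(f-\varphi)(x)\leq\inf_K(f-\varphi)+\epsilon\}$, using the elementary fact that $f-\varphi$ has a strong minimum iff $\textnormal{diam}\,S_\epsilon(\varphi)\to 0$ as $\epsilon\to 0^+$. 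Each $A_n$ is closed: if $\varphi_j\to\varphi$ in $Y$ then, using $\|\cdot\|_\infty\leq\alpha\|\cdot\|_Y$, one has $S_{\epsilon/3}(\varphi_j)\subseteq S_\epsilon(\varphi)$ for $j$ large, whence $\textnormal{diam}\,S_\epsilon(\varphi)\geq 1/n$. And each $A_n$ has empty interior: given a hypothetical ball $B_Y(\varphi_0,r)\subseteq A_n$, pick via Lemma \ref{Sep} a separable subspace $Z_0\subseteq Y$ separating the points of $K$ and put $Z:=\overline{\textnormal{span}}(Z_0\cup\{\varphi_0\})$, a separable Banach subspace separating the points of $K$; by the preceding paragraph $N(f)\cap Z$ is of the first category in the Baire space $Z$, yet it contains the nonempty open set $B_Y(\varphi_0,r)\cap Z$, a contradiction. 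Hence $N(f)=\bigcup_nA_n$ is of the first Baire category in $Y$.

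The main obstacle is the identity $N(f)=NG(F)$: one must convert, in both directions, a differentiability statement about the conjugate $F$ into a Tykhonov well-posedness statement about the perturbed problem $f-\varphi$, and this is precisely where compactness of $K$, point separation by $Y$ (to collapse $\partial F(\varphi)$ to a single Dirac mass when it is a singleton) and lower semicontinuity of $f$ (to promote uniqueness of the minimizer to a strong minimizer) all have to be used together; this is the content of the non-convex duality of \cite{Ba}. Everything after that — Zajíček's theorem in the separable case and the separable-subspace reduction for the Baire-category statement — is comparatively routine, the only point to keep track of being that the monotonicity of $\epsilon\mapsto S_\epsilon(\varphi)$ and the continuous embedding $Y\hookrightarrow C(K)$ make the sets $A_n$ genuinely closed.
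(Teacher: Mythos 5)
Your proposal is correct and follows essentially the same route as the paper: it identifies $N(f)$ with the set of points of Gâteaux non-differentiability of the conjugate $F:\varphi\mapsto\sup_{x\in K}(\varphi(x)-f(x))$, applies Zajicek's theorem on separable subspaces, and reduces the general Baire-category statement to the separable case via Lemma \ref{Sep}. The only deviations are minor and sound: you prove the equivalence $N(f)=NG(F)$ directly from the subdifferential/strong-minimum sandwich argument where the paper cites Lemma \ref{LemmP} together with [Theorem 2.8, \cite{Ba}], and you decompose $N(f)$ into the closed nowhere dense sets $A_n$ built from sublevel-set diameters rather than using the paper's open dense sets $O_n$.
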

\begin{proof} Let $Z$ be any Banach subspace of $(Y,\|.\|_Y)$. Consider the function $f^{\times}$ defined for all $\varphi \in Z$ by
 $$ f^{\times} (\varphi):=\sup_{x\in K} \lbrace \varphi(x) - f(x)\rbrace.$$ 
It is clear that $f^{\times}$ is a convex $1$-Lipschitz continuous function on $Z$. 

\noindent{\it \underline{The separable case}:} Suppose that $Z$ is a separable Banach subspace of $(Y,\|.\|_Y)$ which separates the points of $K$. Using [Theorem 2; \cite{Za}] we get that $f^{\times}$ is G\^ateaux-differentiable outside a set $N$ which can be covered by countably many $d.c$ $hypersurface$ in $Z$. On the other hand, combining Lemma \ref{LemmP} and [Theorem 2.8., \cite{Ba}] we get that $f^{\times}$ is G\^ateaux-differentiable at a point $\varphi\in Z$ if and only if $f- \varphi$ has a strong minimum on $K$. Thus, the set $N$ of points of non G\^ateaux-differentiability of $f^{\times}$, coincide with the set 
$$N(f)\cap Z=\left\{\varphi  \in Z : f-\varphi \textnormal { does not have a strong minimum on }\hspace{1mm} K \right\}.$$ This complete the proof in the separable case.

\noindent{\it \underline{The general case}:} As in the proof of Deville-Godefroy-Zizler variational principle, we use the Baire category theorem, but here we dont admits the existence of a bump function in $Y$ having a support of arbitrary small diameter. We use {\it The separable case}. Indeed, we claim that 
$$O_n:=\lbrace \varphi \in Y; \exists x_n\in K/ (f-\varphi)(x_n) < \inf\lbrace (f-\varphi)(x): d(x,x_n)\geq \frac 1 n\rbrace\rbrace$$

is an open dense subset of $Y$. The fact that $O_n$ is open follows from the hypothesis $\alpha\|.\|_Y\geq \|.\|_{\infty}$. Let us prove that $O_n$ is dense. From Lemma \ref{Sep} there exists a separable Banach subspace $Z$ of $(Y,\|.\|_Y)$ which separates the points of $K$. Let $\varphi\in Y$ and $\varepsilon> 0$. By using \noindent{\it "The separable case"} with the function $f-\varphi$ and the separable space $Z$, we get an $h\in Z$ such that $\|h\|_Y<\varepsilon$ and $(f-\varphi)-h$ has a strong minimum on $K$ at some point $x_0$. It follows that $\varphi+h\in O_n$ for all $n\in \N^*$, by taking $x_n=x_0$ for all $n\in \N^*$. This shows that $O_n$ is dense in $Y$. 

Since $Y$ is a complete metric space, $\mathcal{G}=\cap_{n\geq1} O_n$ is a dense $G_\delta$ subset of $Y$, by the Baire category theorem. Now, we claim that 
$$\mathcal{G}\subseteq \left\{\varphi  \in Y : f-\varphi \textnormal { have a strong minimum on } K \right\}.$$ Indeed, let $\varphi \in \mathcal{G} $. For each $n\geq 1$ there exists $x_n \in K$ such that 
$$ (f- \varphi)(x_n) < \inf\lbrace (f-\varphi)(x): d(x,x_n)> \frac 1 n \rbrace.$$
Since $K$ is compact metric space, there exists a subsequence $(x_{n_k})_k$ that converges to some point $x_{\infty}$. Using the lower semicontinuity of $f$, we get that 
\begin{eqnarray*}
(f-\varphi)(x_{\infty}) \leq \liminf_{k} (f-\varphi)(x_{n_k}) &\leq& \liminf_k \inf\lbrace (f-\varphi)(x): d(x,x_{n_k})> \frac 1 n_k\rbrace\\
&\leq& \inf\lbrace (f-\varphi)(x): x\in K\setminus\lbrace x_{\infty}\rbrace\rbrace
\end{eqnarray*}
Now, to see that $x_{\infty}$ is a strong minimum of $f-\varphi$, let $(y_n)_n$ be a sequence in $K$ such that $(f-\varphi)(y_n)$ converges to $(f-\varphi)(x_{\infty})$. We prove that $(y_n)_n$ converges to $x_{\infty}$. Indeed, suppose that the contrary hold. Extracting, if necessary, a subsequence, we can assume that there exists $\varepsilon > 0$ such that for all $n\in N$, $d(y_n,x_{\infty})\geq \varepsilon$. Thus, there exists an integer $p$ such that $d(x_p,y_n)\geq \frac 1 p$ for all $n\in \N$. Hence, 
$$(f-\varphi)(x_{\infty})\leq (f-\varphi)(x_p) < \inf\lbrace (f-\varphi)(x): d(x,x_p)\geq \frac 1 p\rbrace \leq (f-\varphi)(y_n)$$
for all $n\in \N$, which contedict the fact that $\lim_n (f-\varphi)(y_n)=(f-\varphi)(x_{\infty})$. 
This concludes the proof.

\end{proof}

We obtain immediately the following Corollary.

\begin{Cor} \label{P1} Let $(K,d)$ be a compact metric space and $Y$ be any closed subspace of $(C(K), \|.\|_{\infty})$ that separates the points of $K$. Let $f : K\rightarrow \R\cup \left\{+\infty \right\}$ be a proper lower semi-continuous function. Then, the set 
$$N(f)=\left\{\varphi  \in Y : f-\varphi \textnormal { does not have a strong minimum on } K \right\}$$
can be covered by countably many $d.c.$ hypersurface in $Y$.
\end{Cor}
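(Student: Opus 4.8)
The plan is to deduce this directly from Lemma \ref{MBAD}, the only real work being to verify that its hypotheses hold once one takes $\|.\|_Y$ to be the sup-norm on $Y$. First I would recall that since $K$ is a compact metric space, the Banach space $(C(K),\|.\|_{\infty})$ is separable (this was already used in the proof of Lemma \ref{Sep}). Consequently its closed subspace $Y$, equipped with the restriction of $\|.\|_{\infty}$, is itself a \emph{separable} Banach space. Second, the norm inequality $\alpha\|.\|_Y\geq\|.\|_{\infty}$ demanded by Lemma \ref{MBAD} is satisfied here trivially with $\alpha=1$, since $\|.\|_Y=\|.\|_{\infty}$ on $Y$.

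With these two observations, $Y$ meets all the hypotheses of Lemma \ref{MBAD}: it is a Banach space included in $C(K)$, it separates the points of $K$ by assumption, it satisfies $1\cdot\|.\|_Y\geq\|.\|_{\infty}$, and it is separable. The function $f-\varphi$ remains proper and lower semi-continuous for every $\varphi\in Y$ (continuous perturbations do not disturb these properties). Hence the "in particular" clause of Lemma \ref{MBAD} applies verbatim and gives exactly that $N(f)=\{\varphi\in Y: f-\varphi \text{ has no strong minimum on } K\}$ can be covered by countably many $d.c.$ hypersurfaces in $Y$.

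There is essentially no obstacle: the corollary is merely the specialization of Lemma \ref{MBAD} to a closed subspace $Y$ of $(C(K),\|.\|_{\infty})$, the case in which the separability needed to get the $d.c.$-hypersurface conclusion (rather than just first Baire category) is automatic from the compactness and metrizability of $K$. The one point worth stating explicitly is that a closed subspace of $(C(K),\|.\|_{\infty})$ is a Banach space in its own right, so that Lemma \ref{MBAD} is genuinely applicable with $Z=Y$; alternatively one could invoke the intermediate assertion of Lemma \ref{MBAD} about separable subspaces $Z$ of $Y$ together with Lemma \ref{Sep}, but taking $Z=Y$ directly is cleaner.
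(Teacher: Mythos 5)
Your proposal is correct and matches the paper's own proof: both simply observe that separability of $(C(K),\|.\|_{\infty})$ (from compactness and metrizability of $K$) passes to the closed subspace $Y$, that the norm inequality holds with $\alpha=1$, and then invoke the separable case of Lemma \ref{MBAD}. Nothing is missing.
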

\begin{proof} Since $(K,d)$ is a compact metric space, $(C(K), \|.\|_{\infty})$ is a separable Banach space and so $(Y, \|.\|_{\infty})$ is a separable Banach subspace satisfying the hypothesis of Lemma \ref{MBAD}.

\end{proof}
The above Corollary cannot be obtained from the D-G-Z variational principle. For example, the space $Y=\lbrace\varphi \in C(B_{\R^n})/ \varphi(0)=0\rbrace$, where $B_{\R^n}$ denotes the closed unit ball of $\R^n$ for a fixed norm, satisfies the hypothesis of Corollary \ref{P1} but does not satisfy the condition $(ii)$ of Deville-Rivalski: 

$(ii)$ for every natural number $n$, there exists a positive constant $M_n$ such that for
any point $x \in B_{\R^n}$ there exists a function $h_n : B_{\R^n} \longrightarrow [0; 1]$, such that $h_n \in Y$ ,
$\|h_n\|\leq M_n$, $h_n(x) = 1$ and $diam(supp (h)) < \frac 1 n.$ 
\begin{Rem} A strong and strict minimum coincides for lower semi continuous functions on a compact metric space.
\end{Rem}
\subsection{Applications to the linear variational principle and to the G\^{a}teaux differentiability.}

As consequence of Lemma \ref{MBAD}, we have the following linear analogous to the Stegall's variational principle. Note that also this results cannot be obtained from the D-G-Z variational principle.
\begin{Prop} \label{Boule} Let $E$ be a Banach space and $K$ be a weak$^*$ compact metrizable subset of $E^*$. Let $f: (K,\textnormal{Weak}^*)\longrightarrow \R\cup \left\{+\infty \right\}$ be a proper lower semi-continuous function. Then, the set
$$N(f)=\left\{ x\in E : f-\hat{x} \textnormal { does not have a strict minimum on }\hspace{1mm} K \right\}$$ is included in a set of the first Baire category. If moreover, $E$ is a separable Banach space, then $N(f)$ can be covered by countably many $d.c.$ hypersurface in $E$.
\end{Prop}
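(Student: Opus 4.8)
The plan is to apply Lemma~\ref{MBAD} to the Banach space $Y$ of evaluation functionals on $K$ coming from $E$, and then to transport the conclusion back to $E$ through the canonical linear surjection $E\to Y$. So I would first fix a metric $d$ on $K$ compatible with the weak$^*$ topology, and note that, $K$ being weak$^*$ compact, $M:=\sup_{p\in K}\|p\|<\infty$. Consider the bounded operator $T:E\to C(K)$, $Tx:p\mapsto\langle p,x\rangle$ (each $Tx$ is weak$^*$ continuous, hence lies in $C(K)$), which satisfies $\|Tx\|_\infty\le M\|x\|$. Put $Y:=T(E)\subseteq C(K)$, normed by transporting along the isomorphism $Y\cong E/\ker T$ the quotient norm; since $\ker T$ is closed, $(Y,\|\cdot\|_Y)$ is a Banach space with $M\|y\|_Y\ge\|y\|_\infty$ for all $y\in Y$, and $Y$ separates the points of $K$ because $E$ separates $E^*$. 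Thus $Y$ fulfils all the hypotheses of Lemma~\ref{MBAD}.

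Next, on $K$ one has $f-\hat x=f-Tx$, so $x\in N(f)$ if and only if $Tx\in\widetilde N(f):=\{\varphi\in Y: f-\varphi \textnormal{ has no strict minimum on }K\}$; that is, $N(f)=T^{-1}(\widetilde N(f))$. Since $f$ is lower semicontinuous and $K$ is compact metric, a strict and a strong minimum coincide (the Remark), so $\widetilde N(f)$ is exactly the set treated in Lemma~\ref{MBAD}, which is therefore of the first Baire category in $Y$. Being a continuous linear surjection of Banach spaces, $T$ is open, so the $T$-preimage of a closed nowhere dense subset of $Y$ is closed and nowhere dense in $E$ (an interior point of the preimage would be sent by the open map $T$ to an interior point of the set). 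Hence $N(f)=T^{-1}(\widetilde N(f))$ lies in a set of the first Baire category in $E$, which settles the first assertion.

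For the separable case, $Y$ is separable (a continuous linear image of $E$), so Lemma~\ref{MBAD} gives $\widetilde N(f)\subseteq\bigcup_n H_n$ with each $H_n$ a $d.c.$ hypersurface of $Y$; it then suffices to show that $T^{-1}(H)$ is a $d.c.$ hypersurface of $E$ whenever $H$ is one of $Y$. I would write $H=\{\varphi\in Y:\ell(\varphi)=\psi(\varphi-\ell(\varphi)v_0)\}$, where $\ell\in Y^*\setminus\{0\}$, $v_0\in Y$ with $\ell(v_0)=1$, $Y=\ker\ell\oplus\R v_0$, and $\psi:\ker\ell\to\R$ is a difference of two continuous convex functions; then pick $w_0\in E$ with $Tw_0=v_0$ and set $\tilde\ell:=\ell\circ T\in E^*\setminus\{0\}$. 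Since $Tu\in\ker\ell$ whenever $u\in\ker\tilde\ell$, a direct computation gives $T^{-1}(H)=\{u+\chi(u)w_0:u\in\ker\tilde\ell\}$ with $\chi:=\psi\circ(T|_{\ker\tilde\ell})$, which is again a difference of two continuous convex functions because composing such a function with a bounded linear map preserves this property; and $\tilde\ell(w_0)=1$ gives $E=\ker\tilde\ell\oplus\R w_0$, so $T^{-1}(H)$ is indeed a $d.c.$ hypersurface of $E$. Consequently $N(f)\subseteq\bigcup_n T^{-1}(H_n)$ is covered by countably many $d.c.$ hypersurfaces of $E$. The step requiring genuine care is this transfer of $d.c.$ hypersurfaces through $T$ — namely, checking that $T^{-1}(H)$ has exactly the displayed graph form over $\ker\tilde\ell$ and that $\chi$ is $d.c.$; everything else is bookkeeping once the right ambient space $Y$ has been identified.
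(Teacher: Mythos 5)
Your proposal is correct and rests on the same engine as the paper: view the elements of $E$ as continuous functions on $(K,\textnormal{Weak}^*)$, bound them uniformly by $\sup_{p\in K}\|p\|$, and invoke Lemma~\ref{MBAD}. The difference is one of packaging. The paper simply declares $(E,\|.\|)$ to be ``a Banach space included in $C(K)$'' and applies Lemma~\ref{MBAD} with $Y=E$ directly, which tacitly assumes the restriction map $x\mapsto \hat{x}_{|K}$ is injective (it need not be when the linear span of $K$ is not weak$^*$ dense). You instead pass to $Y=T(E)\cong E/\ker T$ with the quotient norm and then transport the conclusion back along the open surjection $T$; this forces you to do two extra (correct) verifications that the paper never faces: that preimages of nowhere dense sets under a continuous open linear surjection are nowhere dense, and that $T^{-1}$ of a $d.c.$ hypersurface of $Y$ is again a $d.c.$ hypersurface of $E$ (your graph computation over $\ker\tilde\ell$ and the observation that precomposition with a bounded linear map preserves differences of convex functions are both sound). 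What your route buys is rigor in the non-injective case; what the paper's route buys is brevity, at the cost of an abuse of the phrase ``included in $C(K)$''. Both yield the stated conclusion, including the slightly weaker phrasing ``included in a set of the first Baire category'', which your preimage argument delivers exactly.
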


\begin{proof} Since $K$ is weak$^*$ compact in $E^*$, it is norm bounded. Let $\alpha:=\sup_{x^*\in K} \|x^*\|< +\infty$. Thus, $(E,\|.\|)$ is a Banach space, included in $C(K)$, which separates the points of $K$ and satisfies $\alpha\|.\|\geq \|.\|_{\infty}$. So we can apply Lemma \ref{MBAD} with $(Y,\|.\|)=(E,\|.\|)$ to obtain the result. 
\end{proof}
\begin{Prop} \label{Boule1} Let $E$ be a Banach space. Let $K$ be a weak compact metrizable subset of $E$. Let $f: (K,\textnormal{Weak})\longrightarrow \R\cup \left\{+\infty \right\}$ be a proper lower semi-continuous function. Then, the set
$$N(f)=\left\{ x^*\in E^* : f-x^* \textnormal { does not have a strict minimum on }\hspace{1mm} K \right\}$$ 
is included in a set of the first Baire category in $E^*$. If moreover, $E^*$ is a separable Banach space, then $N(f)$ can be covered by countably many $d.c.$ hypersurface in $E^*$.
\end{Prop}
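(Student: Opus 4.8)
The plan is to deduce the proposition from Lemma \ref{MBAD}, applied with the compact metric space $(K,\textnormal{Weak})$ and the whole Banach space $Y=(E^*,\|.\|)$; this is the exact dual analogue of Proposition \ref{Boule}, with the roles of $E$ and $E^*$ interchanged. The first point to record is that $K$ is norm bounded: since $K$ is weakly compact, $x^*(K)$ is a compact, hence bounded, subset of $\R$ for every $x^*\in E^*$, so, viewing $K$ inside the bidual via the canonical embedding, $K$ is pointwise bounded on the Banach space $E^*$; by the Banach--Steinhaus theorem, $\alpha:=\sup_{x\in K}\|x\|<+\infty$.

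Next I would verify that $(Y,\|.\|)=(E^*,\|.\|)$ satisfies the hypotheses of Lemma \ref{MBAD} relative to $(K,\textnormal{Weak})$. The space $(K,\textnormal{Weak})$ is a compact metric space by assumption. Every $x^*\in E^*$ is weakly continuous, so its restriction to $K$ lies in $C(K)$, and thus $E^*$ is a Banach space of continuous functions on $K$; it separates the points of $K$ because, by the Hahn--Banach theorem, $E^*$ separates the points of $E$; and for each $x^*\in E^*$ one has $\sup_{x\in K}|x^*(x)|\le\alpha\|x^*\|$, i.e. $\alpha\|.\|\ge\|.\|_{\infty}$. (One does not need the restriction map $E^*\to C(K)$ to be injective, since Lemma \ref{MBAD} uses only that the elements of $Y$, seen as functions on $K$, separate the points of $K$ and dominate the sup-norm.) Hence Lemma \ref{MBAD} applies, with $Z=Y=E^*$ in the separable case.

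Finally I would match the conclusions. Since $f$ is lower semi-continuous and each $x^*$ is continuous, $f-x^*$ is lower semi-continuous on the compact metric space $K$, so a strong minimum of $f-x^*$ is the same thing as a strict minimum; therefore the set $N(f)$ of the statement coincides with the set $N(f)$ furnished by Lemma \ref{MBAD} for $Y=E^*$. That lemma then gives at once that $N(f)$ is of the first Baire category in $E^*$ (in particular it is contained in a set of first category), and that when $E^*$ is separable it can be covered by countably many $d.c.$ hypersurfaces in $E^*$. I do not foresee any real obstacle here: the only genuinely non-formal ingredient is the norm-boundedness of weakly compact sets via Banach--Steinhaus, and the rest is the routine verification of the hypotheses of Lemma \ref{MBAD}, carried out exactly as in the proof of Proposition \ref{Boule}.
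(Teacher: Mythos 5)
Your proof is correct and follows essentially the same route as the paper's: both apply Lemma \ref{MBAD} to $Y=(E^*,\|.\|)$ viewed inside $C(K)$ for the weakly compact metrizable set $K$, using norm-boundedness of $K$, separation of points via Hahn--Banach, and the coincidence of strong and strict minima on a compact metric space. Your write-up merely makes explicit (via Banach--Steinhaus) the boundedness step that the paper asserts without comment.
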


\begin{proof} Since $K$ is weak compact in $E^*$, it is norm bounded. Let $\alpha:=\sup_{x\in K} \|x\|< +\infty$. Thus, $(E^*,\|.\|)$ is a Banach space, included in $C(K)$, which separates the points of $K$ by the Hahn-Banach theorem and satisfies $\alpha\|.\|\geq \|.\|_{\infty}$. So we can apply Lemma \ref{MBAD} with $(Y,\|.\|)=(E^*,\|.\|)$ to obtain the result. 
\end{proof}

\begin{Rem} From the above propositions, we get in particular that the set of all continuous functionals from the predual that not exposes the dual unit ball $B_{E^*}$ of a separable Banach space $E$, can be covered by countably many $d.c.$ hypersurface in $E$. Also in a reflexive separable Banach space $E$, the set of all continuous functionals that not exposes the unit ball $B_{E}$ can be covered by countably many $d.c.$ hypersurface in $E^*$.
\end{Rem}
\vskip5mm
Recall that a weak Asplund space $E$ is a Banach space in which every convex continuous function is G\^{a}teaux differentiable at each point of a dense $G_\delta$ subset of $E$. The following corollary, gives a class of convex continuous functions which are G\^{a}teaux differentiable at each point of a dense $G_\delta$ subset of $E$, where $E$ is any Banach space.
\begin{Cor} \label{Diff} Let $E$ be a Banach space and $f : E \longrightarrow \R$, be a convex continuous function such that $\overline{dom(f^*)}^{\textnormal{Weak}^*}$ is weak$^*$ compact metrizable subset of $E^*$. Then, $f$ is G\^{a}teaux differentiable at each point of a dense $G_\delta$ subset of $E$.
\end{Cor}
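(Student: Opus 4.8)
The plan is to reduce the Gâteaux differentiability of $f$ on a dense $G_\delta$ set to the existence of strict minima of perturbed functions on the weak$^*$ compact metrizable set $K:=\overline{dom(f^*)}^{\textnormal{Weak}^*}$, and then invoke Proposition \ref{Boule} (equivalently, the key Lemma \ref{MBAD}). The bridge between the two is the classical fact from convex analysis that $f$ is Gâteaux differentiable at $x\in E$ precisely when the (weak$^*$ compact, by Alaoglu, since $f$ is continuous hence locally bounded) subdifferential $\partial f(x)$ is a singleton, and that $\partial f(x)$ is exactly the set of maximizers over $E^*$ of the concave upper semicontinuous function $x^*\mapsto \langle x^*,x\rangle - f^*(x^*)$, the supremum being attained because $f=f^{**}$ by Fenchel--Moreau. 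The crucial observation is that these maximizers lie in $dom(f^*)\subseteq K$, so that the problem "$\partial f(x)$ is a singleton" becomes "the proper lower semicontinuous function $f^*$ restricted to $(K,\textnormal{Weak}^*)$, perturbed by the linear functional $-\hat{x}$ (where $\hat x:x^*\mapsto \langle x^*,x\rangle$), attains a strict maximum on $K$", i.e. $-f^*$ perturbed by $\hat x$ has a strict minimum on $K$.

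First I would verify that $f^*$ is proper (since $f$ is finite and continuous, hence $dom(f^*)\neq\emptyset$) and weak$^*$ lower semicontinuous on $E^*$, hence its restriction to $K$ is weak$^*$ lower semicontinuous; it is proper on $K$ because $dom(f^*)\subseteq K$. Next I would record the two-sided implication: for $x\in E$, the set of $x^*\in K$ at which $f^*-\hat x$ attains its minimum on $K$ coincides with $\partial f(x)$; the inclusion $\partial f(x)\subseteq dom(f^*)\subseteq K$ uses the Fenchel--Young equality $f(x)+f^*(x^*)=\langle x^*,x\rangle$, and the reverse direction that every minimizer over $K$ is actually a minimizer over all of $E^*$ uses again $f=f^{**}$ together with $f^*\equiv+\infty$ off $dom(f^*)\subseteq K$. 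Consequently $f^*-\hat x$ has a \emph{strict} minimum on $K$ if and only if $\partial f(x)$ is a singleton, which by the standard characterization is equivalent to Gâteaux differentiability of $f$ at $x$.

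Now I would apply Proposition \ref{Boule} to the proper weak$^*$ lower semicontinuous function $f^*\colon(K,\textnormal{Weak}^*)\to\R\cup\{+\infty\}$: the set $N(f^*)=\{x\in E: f^*-\hat x \text{ has no strict minimum on } K\}$ is contained in a set of the first Baire category in $E$. By the equivalence just established, $E\setminus N(f^*)$ is exactly the set of points of Gâteaux differentiability of $f$, and it contains the complement of a first-category set, hence it contains a dense $G_\delta$ subset of $E$ (the complement of the countable union of the closed nowhere dense sets covering $N(f^*)$; one may take their closures without enlarging a first-category set, and $E$ is Baire). This gives the claim.

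I expect the main obstacle to be the careful bookkeeping in the second step, namely checking that minimization of $f^*-\hat x$ over the \emph{truncated} domain $K$ is genuinely equivalent to minimization over $E^*$, and that the associated notions of "strict minimum on $K$" and "singleton subdifferential in $E^*$" match up with no loss — in particular one must rule out the possibility that $f^*-\hat x$ has several global minimizers in $E^*$ all happening to lie in $K$ while the restriction to $K$ still looks like it has a strict minimum; this cannot occur precisely because $dom(f^*)\subseteq K$, so "minimizer over $K$" and "minimizer over $E^*$" are literally the same set, but this point deserves to be stated explicitly. The rest is a routine combination of Fenchel duality, the Alaoglu-based compactness of $\partial f(x)$, and Proposition \ref{Boule}.
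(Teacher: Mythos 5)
Your proposal is correct and follows essentially the same route as the paper: restrict the proper weak$^*$ lower semicontinuous function $f^*$ to $K=\overline{dom(f^*)}^{\textnormal{Weak}^*}$, apply Proposition \ref{Boule} to get a residual set of $x$ for which $f^*-\hat{x}$ has a strict minimum on $K$ (equivalently on $E^*$, since $dom(f^*)\subseteq K$), and then translate this into G\^{a}teaux differentiability of $f$ at $x$. The only difference is that the paper invokes the Asplund--Rockafellar duality result [Corollary 1, \cite{AR}] for this last translation, whereas you derive it directly from the Fenchel--Moreau identification of $\partial f(x)$ with the minimizers of $f^*-\hat{x}$ together with the singleton-subdifferential criterion; both are valid.
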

\begin{proof} Set $K=\overline{dom(f^*)}^{\textnormal{Weak}^*}$, which is a convex weak$^*$ compact metrizable subset of $E^*$. Since, $f^*: K\longrightarrow \R \cup \lbrace+ \infty\rbrace$ is proper weak$^*$ lower semi continuous, by Proposition \ref{Boule} we have that the set
$$G=\lbrace x\in E/ f^* -\hat{x} \textnormal { has a strict minimum on }\hspace{1mm} K\rbrace$$
contain a $G_\delta$ subset of $E$. Since by definition, $dom(f^*)\subset K$, we have that 
$$G=\lbrace x\in E/ f^* -\hat{x} \textnormal { has a strict minimum on }\hspace{1mm} E^*\rbrace.$$
By using the classical Asplund-Rockafellar duality result [Corollary 1, \cite{AR}], we get that $f$ is G\^{a}teaux differentiable at each point of $G$.
\end{proof}

Let $C$ be a non-empty subset of $E^*$. We denote by $\sigma_C$ the support function defined on $E$ by
$$\sigma_C(x)=\sup_{x^*\in C}x^*(x); \hspace{2mm} \forall x\in E.$$

Let $f : E \longrightarrow \R \cup \lbrace+ \infty\rbrace$ be a proper lower semi contiuous convex function. The inf convolution of $f$ and $\sigma_C$ is defined by
$$f\bigtriangledown \sigma_C (x):= \inf_{y\in E} \lbrace f(x-y)+ \sigma_C(y)\rbrace.$$
From the above corollary we get that if $K$ is a convex weak$^*$ compact metrizable subset of $E^*$ and $f$ is  proper lower semi contiuous convex function on $E$, then $f\bigtriangledown \sigma_K$ is G\^{a}teaux differentiable at each point of a dense $G_\delta$ subset of $E$. 

\vskip5mm

Let $K$ be a convex subset of a vector space. A function $\varphi : K\rightarrow \R$ is said to be affine if for all $x, y \in K$ and $0 \leq \lambda \leq 1$, $\varphi(\lambda x + (1 - \lambda)y) = \lambda \varphi(x) + (1 - \lambda)\varphi(y)$. 
The space of all continuous real-valued affine functions on $K$ will be denoted by $Aff(K)$. 
\begin{Prop} \label{Cor0} Let $K$ be a compact metrizable convex subset of a \textnormal{l.c.t} space $X$ and $f: K\longrightarrow \R\cup \left\{+\infty \right\}$ be a proper lower semi-continuous function. Then the set
$$N(f):=\left\{\varphi  \in Aff(K) : f-\varphi \textnormal { does not have a strong minimum on }\hspace{1mm} K \right\}$$ can be covered by countably many $d.c$ hypersurface of  $(Aff(K),\|.\|_{\infty})$. 
\end{Prop}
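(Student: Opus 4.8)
The plan is to recognize Proposition \ref{Cor0} as a direct instance of Corollary \ref{P1} (equivalently, of Lemma \ref{MBAD} in its separable form) applied to the function space $Y=Aff(K)$ sitting inside $(C(K),\|.\|_{\infty})$. The only genuine work is to verify that $Aff(K)$ meets the hypotheses of that corollary, namely that it is a closed subspace of $(C(K),\|.\|_{\infty})$ separating the points of $K$.

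First I would check that $(Aff(K),\|.\|_{\infty})$ is a \emph{closed} linear subspace of $(C(K),\|.\|_{\infty})$. It is plainly a linear subspace, and if $(\varphi_n)_n\subset Aff(K)$ converges uniformly to some $\varphi\in C(K)$, then for every $x,y\in K$ and $0\le \lambda\le 1$ one passes to the limit in the identity $\varphi_n(\lambda x+(1-\lambda)y)=\lambda\varphi_n(x)+(1-\lambda)\varphi_n(y)$ to obtain the same identity for $\varphi$; hence $\varphi\in Aff(K)$, so $Aff(K)$ is norm-closed. Next I would check that $Aff(K)$ separates the points of $K$: since $K$ is a convex subset of the l.c.t.\ space $X$, given $x\neq y$ in $K$ the Hahn--Banach separation theorem supplies $x^*\in X^*$ with $x^*(x)\neq x^*(y)$, and the restriction of $x^*$ to $K$ is a continuous affine function on $K$, hence an element of $Aff(K)$ separating $x$ and $y$.

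Finally, because $K$ is compact metrizable, $(C(K),\|.\|_{\infty})$ is a separable Banach space, hence so is its closed subspace $(Aff(K),\|.\|_{\infty})$. Thus $Y=Aff(K)$ is a closed subspace of $(C(K),\|.\|_{\infty})$ that separates the points of $K$, and Corollary \ref{P1} applies verbatim with this $Y$ and the given proper lower semi-continuous $f$: the set $N(f)=\{\varphi\in Aff(K): f-\varphi$ does not have a strong minimum on $K\}$ can be covered by countably many $d.c.$ hypersurfaces of $(Aff(K),\|.\|_{\infty})$, which is the assertion.

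I do not expect any real obstacle here: all the analytic content — the Baire-category argument via the convex $1$-Lipschitz function $f^{\times}$, Zajicek's theorem on non-G\^ateaux-differentiability sets, and the duality from Lemma \ref{LemmP} together with [Theorem 2.8, \cite{Ba}] — is already packaged inside Lemma \ref{MBAD}, so the proof reduces to the three routine verifications above. The one point worth emphasising is that metrizability of $K$ is precisely what is being used, through the separability of $C(K)$ (hence of $Aff(K)$), to upgrade the conclusion from "first Baire category" to the much stronger covering by countably many $d.c.$ hypersurfaces.
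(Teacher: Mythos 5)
Your proposal is correct and follows essentially the same route as the paper: both apply Lemma \ref{MBAD} (in its separable form, i.e.\ Corollary \ref{P1}) with $Y=Aff(K)$, using the Hahn--Banach theorem to see that $Aff(K)$ separates the points of $K$ and the compact metrizability of $K$ to get separability of $C(K)$ and hence of its closed subspace $Aff(K)$. Your explicit verification that $Aff(K)$ is norm-closed (by passing to the limit in the affine identity under uniform convergence) is a detail the paper merely asserts, so the proposal is, if anything, slightly more complete.
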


\begin{proof} We use Lemma \ref{MBAD} with $Y=Aff(K)$. Since $(Aff(K),\|.\|_{\infty})$ is a closed Banach subspace of the separable Banach space $(C(K),\|.\|_{\infty})$, it is separable. On the other hand, by the Hahn-Banach theorem, $Aff(K)$ separate the points of $K$, since it contains the set $\lbrace x^*_{|K}: x^*\in X^*\rbrace$. So, from Lemma \ref{MBAD}, the set 
$$N(f)=\left\{\varphi  \in Aff(K) : f-\varphi \textnormal { does not have a strong minimum on }\hspace{1mm} K \right\}$$ can be covered by countably many $d.c$ hypersurface of  $(Aff(K),\|.\|_{\infty})$.
\end{proof}

\section{The main results and applications.} \label{The-main}
This section is devoted to the proofs of the main results of the paper. Some applications are also given.
\subsection{Proof of Theorem \ref{Th1} and consequences.}\label{Aff-Ex}

Now, we give the proof of the first main result. 

\begin{proof}[Proof of Theorem \ref{Th1}] Let $K$ be a compact metrizable $\Phi$-convex subset of $S$. Under the hypothesis, replacing if necessary $(\Phi,\|.\|_\Phi)$ by the Banach space $(Y,\|.\|_Y)$ where $$Y:=\lbrace \varphi_{|K}, \textnormal{ the restriction of } \varphi\in \Phi \textnormal{ to } K \rbrace$$ with $\|\varphi_{|K}\|_Y:=\|\varphi\|_\Phi$, we can assume that $(\Phi,\|.\|_\Phi)$ is a Banach space included in $C(K)$ which separates the points of $K$ and satisfies $\alpha \|.\|_\Phi\geq \|.\|_{\infty}$. Thus, Lemma \ref{MBAD} applies. Using Lemma \ref{MBAD}, applied with the lower semi continuous function $f=i_K$ (the indicator function which is equal to $0$ on $K$ and $+\infty$ othewize), we get that the set of all $\varphi\in \Phi$ which $\Phi$-exposes $K$ at some point, contains a dense $G_\delta$ subset of $\Phi$. In particular $\Phi\textnormal{Exp}(K)\neq \emptyset$. 

Since $K$ is $\Phi$-convex subset of $S$, it is clear that $\textnormal{conv}_\Phi(\Phi\textnormal{Exp}(K))\subseteq K$. Now, let us prove that $K=\textnormal{conv}_\Phi(\Phi\textnormal{Exp}(K))$. Suppose towards a contradiction that there exists $k_0\in K\setminus \textnormal{conv}_\Phi(\Phi\textnormal{Exp}(K))$. 

{\it Claim.} There exist $h\in \Phi$ and $r\in \R$ such that
\begin{eqnarray*}\label{sep}
\sup \lbrace h(k): k\in \textnormal{conv}_\Phi(\Phi\textnormal{Exp}(K))\rbrace < r < h(k_0).
\end{eqnarray*}

{\it Proof of the claim.} Since $\textnormal{conv}_\Phi(\Phi\textnormal{Exp}(K))$ is a $\Phi$-convex subset of $S$, then there exists a set $I$, $\varphi_i\in \Phi$ and $\lambda_i \in \R$ for all $i\in I$ such that
$$\textnormal{conv}_\Phi(\Phi\textnormal{Exp}(K))=\bigcap_{i\in I}\lbrace k\in S/ \varphi_i(k)\leq \lambda_i\rbrace.$$

Since $k_0\not\in\textnormal{conv}_\Phi(\Phi\textnormal{Exp}(K))$, there exits $i_0\in I$ such that $\varphi_{i_0}(k_0)> \lambda_{i_0}$. On the other hand, we have $\varphi_{i_0}(k)\leq \lambda_{i_0}$ for all $k\in \textnormal{conv}_\Phi(\Phi\textnormal{Exp}(K))$. This finish the proof of the claim by taking $h=\varphi_{i_0}$ and by chosing a real number $r\in \R$ such that $ \lambda_{i_0} < r < \varphi_{i_0}(k_0)$.

Now, using Lemma \ref{MBAD} applied with $f=-h+i_K$, we can find $\psi \in \Phi$ close to $0$ in $\Phi$ such that $h+\psi$,  $\Phi$-expose $K$ at some point $k_1\in \Phi\textnormal{Exp}(K)$. Since $\alpha\|.\|\geq \|.\|_{\infty}$, we have that $\varphi:=h+\psi$ is also close to $h$ uniformly on $K$. Hence, using the claim, $\varphi$ satisfies also
\begin{eqnarray} \label{ex1}
\sup \lbrace \varphi(k): k\in \textnormal{conv}_\Phi(\Phi\textnormal{Exp}(K))\rbrace < r < \varphi(k_0).
\end{eqnarray}
On the other hand 
\begin{eqnarray*}
\varphi(k_0) \leq \sup \lbrace \varphi(k): k\in K\rbrace &=& \varphi(k_1)
\end{eqnarray*}
which is a contradiction with (\ref{ex1}), since $k_1\in \Phi\textnormal{Exp}(K)$.
\end{proof}

For the classical convexity we obtain the following Krein-Milman type results for convex compact metrizable subsets, where the extreme points are replaced by the exposed points. Note that the part $(2)$ in the following corollary is an extension of a Klee result [Theorem 2.1, \cite{K}] (See also [Theorem 4.5, \cite{K}]) and the part $(1)$ sems to be new in a general Banach space. We know from [Theorem 6.2., \cite{Ph}] that a Banach space is a G\^{a}teaux differentiability space if and only if, every convex weak$^*$ compact subset of $E^*$ is the weak$^*$ closed convex hull of its weak$^*$ exposed points. Hence, for a non G\^{a}teaux differentiability space (for example if $E=l^1(\Gamma)$, $\Gamma$ uncountable set) there always exist a convex weak$^*$ compact subset of $E^*$ which is not the weak$^*$ closed convex hull of its weak$^*$ exposed points. This shows in particular that Theorem \ref{Th1} is not true in general for not metrizable convex compact subsets. However, the part $(1)$ of the following corollary, shows that the situation is better for convex weak$^*$ compact metrizable subset of $E^*$, when $E$ is any Banach space.
\begin{Cor} \label{GKlee} Let $E$ be a Banach space.

$(1)$ Let $K$ be a convex weak$^*$ compact metrizable subset of $E^*$. Then,
$$K=\overline{\textnormal{conv}}^{w^*}(w^*\textnormal{Exp}(K)).$$

$(2)$ Let $K$ be a convex weak compact metrizable subset of $E$. Then,

$$K=\overline{\textnormal{conv}}^{w}(\textnormal{Exp}(K))=\overline{\textnormal{conv}}^{\|.\|}(\textnormal{Exp}(K)).$$
\end{Cor}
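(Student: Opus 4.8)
The plan is to derive Corollary \ref{GKlee} as a direct specialization of Theorem \ref{Th1} to the two linear settings described in the Examples, together with the classical facts that weak (resp. weak$^*$) closed convex sets coincide with $\Phi$-convex sets and that $\Phi$-exposed points coincide with exposed (resp. weak$^*$ exposed) points.

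For part $(1)$, I would set $S=(E^*,\textnormal{Weak}^*)$ and $(\Phi,\|.\|_\Phi)=(E,\|.\|)$. First I need to check the hypotheses of Theorem \ref{Th1}. Since $K$ is weak$^*$ compact it is norm bounded, say $\alpha_K:=\sup_{x^*\in K}\|x^*\|<+\infty$; then for every $x\in E$ we have $\sup_{x^*\in K}|x^*(x)|\le \alpha_K\|x\|$, which is condition $(1)$ (each $\hat{x}$ restricted to $K$ is weak$^*$ continuous by definition of the weak$^*$ topology). Condition $(2)$, that $E$ separates the points of $K\subseteq E^*$, is immediate. A convex weak$^*$ compact (hence weak$^*$ closed) subset of $E^*$ is $\Phi$-convex: it is the intersection of the weak$^*$ closed half-spaces $\{x^*: x^*(x)\le\lambda\}$ (with $x\in E$, $\lambda\in\R$) that contain it, by the Hahn-Banach separation theorem in the duality $\langle E^*,E\rangle$. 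Then Theorem \ref{Th1}$(ii)$ gives $K=\textnormal{conv}_\Phi(\Phi\textnormal{Exp}(K))$, and I identify $\textnormal{conv}_\Phi(\cdot)$ with $\overline{\textnormal{conv}}^{w^*}(\cdot)$ and $\Phi\textnormal{Exp}(K)$ with $w^*\textnormal{Exp}(K)$, exactly as spelled out in the second Example. This yields $K=\overline{\textnormal{conv}}^{w^*}(w^*\textnormal{Exp}(K))$.

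For part $(2)$, I would instead take $S=(E,\textnormal{Weak})$ and $(\Phi,\|.\|_\Phi)=(E^*,\|.\|)$, as in the first Example. Again $K$ weak compact forces $\alpha_K:=\sup_{x\in K}\|x\|<+\infty$, so $\sup_{x\in K}|x^*(x)|\le\alpha_K\|x^*\|$, giving condition $(1)$; each $x^*$ is weakly continuous on $K$; and $E^*$ separates points of $E$ by Hahn-Banach, giving condition $(2)$. A convex weakly closed (equivalently, by Mazur's lemma, norm closed) subset of $E$ is $\Phi$-convex, being an intersection of closed half-spaces $\{x: x^*(x)\le\lambda\}$. Theorem \ref{Th1}$(ii)$ then gives $K=\textnormal{conv}_\Phi(\Phi\textnormal{Exp}(K))=\overline{\textnormal{conv}}^{w}(\textnormal{Exp}(K))$, and the weak closed convex hull equals the norm closed convex hull by Mazur's lemma, yielding the second equality $\overline{\textnormal{conv}}^{w}(\textnormal{Exp}(K))=\overline{\textnormal{conv}}^{\|.\|}(\textnormal{Exp}(K))$.

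There is no serious obstacle here: the corollary is essentially a dictionary translation of Theorem \ref{Th1}. The only points requiring a line of care are the verification that metrizability of $K$ (in the weak or weak$^*$ topology) is preserved — which is given as a hypothesis — and that the abstract $\Phi$-convex hull really coincides with the topological closed convex hull in these dualities, which is the content of the cited Examples and follows from Hahn-Banach separation (plus Mazur's lemma for the passage between weak and norm closures in part $(2)$). Likewise the identification $\Phi\textnormal{Exp}(K)=w^*\textnormal{Exp}(K)$ (resp. $=\textnormal{Exp}(K)$) is immediate from Definition \ref{Def3}, since a $\varphi\in E$ (resp. $\in E^*$) attaining a strict maximum on $K$ is exactly the notion of a weak$^*$ exposing (resp. exposing) functional.
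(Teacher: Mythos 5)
Your proposal is correct and follows essentially the same route as the paper: both proofs specialize Theorem \ref{Th1} with $(\Phi,\|.\|_\Phi)=(E,\|.\|)$ acting on $(E^*,\textnormal{Weak}^*)$ for part $(1)$ and $(\Phi,\|.\|_\Phi)=(E^*,\|.\|)$ acting on $(E,\textnormal{Weak})$ for part $(2)$, invoking Mazur's lemma for the final equality. Your version merely makes explicit the hypothesis checks (norm boundedness giving $\alpha_K$, Hahn--Banach separation giving $\Phi$-convexity) that the paper delegates to its Examples.
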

\begin{proof} In the part $(1)$, we apply Theorem \ref{Th1} with the convex weak$^*$ metrizable subset $K$ of $E^*$ and by taking $(\Phi,\|.\|_\Phi)=(E,\|.\|)$. In the part $(2)$, we apply Theorem \ref{Th1} with the convex weak metrizable subset $K$ of $E$ and by taking $(\Phi,\|.\|_\Phi)=(E^*,\|.\|)$, using in this case the fact that the weak and norm closur coincides for convex sets by the well known Mazur's lemma. 
\end{proof}

In \cite{K}, Klee pointed the fact that outside the normed space, the above result is not true. He suspected that some condition rather close to normability may be needed and that the metrizability is inadequate even in the separable case, mentioning the following counterexample: in the locally convex separable metrizable space $\R^{\aleph_0}$, the cube $[-1,1]^{\aleph_0}$ has no exposed points.  To answer positively this problem in the general \textnormal{l.c.t} spaces, we introduce an intermediate concept of remarkable points called {\it "affine exposed points"} which is between the concept of exposed points and extreme points.  

\begin{Def} \label{Def1} Let $K$ be a convex subset of a \textnormal{l.c.t} space $X$. We say that a point $x\in K$ is an affine exposed point of $K$, and write $x \in \textnormal{AExp}(K)$, if there exists some affine continuous map $\tau \in Aff(K)$ which attains its strict maximum over $K$ at $x$.
\end{Def}
Clearly, $\textnormal{Exp}(K)\subseteq \textnormal{AExp}(K)\subseteq \textnormal{Ext}(K)$, but these inclusions are strict in general. For example, the cube $[-1,1]^{\aleph_0}$ has affine exposed points by Proposition \ref{Cor0}, but is without exposed points. A comparison of these three sets will be given in Subsection \ref{Example}.

We obtain then the following result.
\begin{Cor} \label{K-M-B} Let $K$ be a convex compact metrizable subset of a \textnormal{l.c.t} space $X$. Then, $\textnormal{AExp}(K)\neq \emptyset$ and $K$ is the closed convex hull of its affine exposed points
$$K=\overline{\textnormal{conv}}(\textnormal{AExp}(K)).$$
\end{Cor}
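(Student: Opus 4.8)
The plan is to deduce Corollary~\ref{K-M-B} directly from Theorem~\ref{Th1} applied with the Banach space $\Phi = (Aff(K),\|.\|_{\infty})$, so that the $\Phi$-exposed points of $K$ are precisely the affine exposed points of $K$ and the $\Phi$-convex hull is the closed convex hull. First I would verify the hypotheses of Theorem~\ref{Th1}. Since $K$ is compact metrizable and convex in a l.c.t space $X$, the space $(C(K),\|.\|_{\infty})$ is a separable Banach space, and $(Aff(K),\|.\|_{\infty})$ is a closed subspace of it, hence itself a Banach space; this is exactly the same observation used in the proof of Proposition~\ref{Cor0}. Each $\varphi \in Aff(K)$ is continuous on $K$ by definition, and with $\alpha_K = 1$ we trivially have $\alpha_K\|\varphi\|_\infty = \|\varphi\|_\infty = \sup_{x\in K}|\varphi(x)|$, so condition~$(1)$ holds. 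For condition~$(2)$, $Aff(K)$ contains all restrictions $x^*_{|K}$ with $x^*\in X^*$, and since $X$ is a Hausdorff l.c.t space, $X^*$ separates points of $X$, hence $Aff(K)$ separates the points of $K$; this is again the argument from Proposition~\ref{Cor0}.

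Next I would check that $K$ is $\Phi$-convex for $\Phi = Aff(K)$. A convex closed subset of the l.c.t space $X$ is an intersection of closed half-spaces $\{x^*\leq\lambda\}$ by the Hahn--Banach separation theorem; restricting to $K$, the set $K$ itself is $S$ in the trivial case, but more to the point we want $K$ viewed as a subset of $S=K$ (or of $X$) to be cut out by affine inequalities. Actually the cleanest route is to take $S=K$ itself as the ambient Hausdorff space and $\Phi = Aff(K)$: then $K = S$ is vacuously $\Phi$-convex, $K$ is compact metrizable, and the hypotheses above still hold verbatim. With this setup, Theorem~\ref{Th1} yields $\Phi\textnormal{Exp}(K)\neq\emptyset$ and $K = \textnormal{conv}_\Phi(\Phi\textnormal{Exp}(K))$.

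It then remains to translate the conclusion back into classical language. By Definition~\ref{Def1} and Definition~\ref{Def3}, a point $x\in K$ is $\Phi$-exposed (with $\Phi=Aff(K)$) if and only if some $\tau\in Aff(K)$ attains its strict maximum over $K$ at $x$, which is exactly the definition of an affine exposed point; thus $\Phi\textnormal{Exp}(K) = \textnormal{AExp}(K)$, giving $\textnormal{AExp}(K)\neq\emptyset$. Finally I would identify $\textnormal{conv}_\Phi(\textnormal{AExp}(K))$ with $\overline{\textnormal{conv}}(\textnormal{AExp}(K))$: for $A\subseteq K$, the $Aff(K)$-convex hull of $A$ is the intersection of all sets $\{x\in K: \tau(x)\leq\lambda\}$ with $\tau\in Aff(K)$ containing $A$, and since these are precisely the closed half-spaces (together with $K$) containing $A$, and $K$ is compact hence the relative topology is that induced from $X$, this intersection equals $\overline{\textnormal{conv}}(A)$ by Hahn--Banach separation in $X$ (every closed convex subset of $K$ is the intersection of the closed half-spaces of $X$ containing it, intersected with $K$). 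Applying this with $A = \textnormal{AExp}(K)$ gives $K = \overline{\textnormal{conv}}(\textnormal{AExp}(K))$.

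The main obstacle, and the only point requiring genuine care, is the last identification: one must check that the abstract $\Phi$-convex hull for $\Phi = Aff(K)$ genuinely coincides with the topological closed convex hull taken in $X$. This hinges on the fact that a compact convex subset of a l.c.t space is determined by the continuous affine functionals on it, i.e. that restrictions of $X^*$ (plus constants) suffice to separate a point of $K$ from a closed convex subset of $K$ not containing it; this is a standard consequence of Hahn--Banach separation applied in $X$ to the compact set $\{k_0\}$ and the closed convex set. Once this is in place, everything else is a direct unwinding of the definitions, so I expect the proof to be short. One should also remark, as the authors note after Corollary~\ref{GKlee}, that this gives a positive answer to Klee's problem even in non-normable l.c.t spaces such as $\R^{\aleph_0}$, where genuine exposed points may fail to exist.
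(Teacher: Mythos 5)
Your proposal is correct and is essentially identical to the paper's own proof, which consists of the single line ``take $S=K$ and $(\Phi,\|.\|_\Phi)=(Aff(K),\|.\|_{\infty})$ in Theorem \ref{Th1}''. You have simply spelled out the routine verifications (separability and closedness of $Aff(K)$ in $C(K)$, point separation via Hahn--Banach, and the identification of the $Aff(K)$-convex hull with the closed convex hull) that the paper leaves implicit, and these are all carried out correctly.
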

\begin{proof} The proof is given by taking $S=K$ and $(\Phi,\|.\|_\Phi)=(Aff(K),\|.\|_{\infty})$ in Theorem \ref{Th1}.
\end{proof}

 We also have the following consequences.
\begin{Cor} Let $E$ be a Banach space. 

$(1)$ Let $(K, \textnormal{Weak}^*)$ be a convex weak$^*$ compact metrizable subset of $E^*$. Then, the set $w^*\textnormal{Exp}(K)$ is weak$^*$ dense in the set $\textnormal{AExp}(K)$, which is weak$^*$ dense in the set $\textnormal{Ext}(K)$. 

$(2)$ Let $(K, \textnormal{Weak})$ be a convex weak compact metrizable subset of $E$. Then, the set $\textnormal{Exp}(K)$ is  weak dense in the set $\textnormal{AExp}(K)$, which is weak dense in the set $\textnormal{Ext}(K)$. 
\end{Cor}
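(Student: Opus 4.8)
The plan is to assemble the statement from the two Krein--Milman type results already proved, namely Corollary~\ref{GKlee} and Corollary~\ref{K-M-B}, together with the partial converse of the Krein--Milman theorem recalled in the introduction (Milman's theorem): if $A$ is a subset of a convex compact set $K$ in a \textnormal{l.c.t} space and $K=\overline{\textnormal{conv}}(A)$, then $\textnormal{Ext}(K)\subseteq\overline{A}$. Throughout, part $(1)$ is carried out inside the \textnormal{l.c.t} space $X=(E^*,\textnormal{Weak}^*)$ and part $(2)$ inside $X=(E,\textnormal{Weak})$; all notions of continuity, closed convex hull, and closure are read in the corresponding topology.

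First I would record the elementary chain of inclusions. Every weak$^*$-continuous (resp. weak-continuous) linear functional $\hat{x}$, $x\in E$ (resp. $x^*\in E^*$), restricts to a continuous affine function on $K$; hence a weak$^*$-exposed (resp. exposed) point of $K$ is in particular an affine exposed point in the sense of Definition~\ref{Def1}, and an affine exposed point is extreme. This gives
$$w^*\textnormal{Exp}(K)\subseteq \textnormal{AExp}(K)\subseteq \textnormal{Ext}(K)$$
in part $(1)$, and $\textnormal{Exp}(K)\subseteq \textnormal{AExp}(K)\subseteq \textnormal{Ext}(K)$ in part $(2)$. Because of these inclusions, to prove that $w^*\textnormal{Exp}(K)$ is weak$^*$ dense in $\textnormal{AExp}(K)$ and that $\textnormal{AExp}(K)$ is weak$^*$ dense in $\textnormal{Ext}(K)$, it suffices to establish the two inclusions $\textnormal{Ext}(K)\subseteq\overline{w^*\textnormal{Exp}(K)}^{\,w^*}$ and $\textnormal{Ext}(K)\subseteq\overline{\textnormal{AExp}(K)}^{\,w^*}$ (and analogously with the weak topology in part $(2)$).

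For the first inclusion I would invoke Corollary~\ref{GKlee}$(1)$, which gives $K=\overline{\textnormal{conv}}^{w^*}(w^*\textnormal{Exp}(K))$, and then apply Milman's theorem with $A=w^*\textnormal{Exp}(K)$, noting that $K$ is weak$^*$ compact. For the second inclusion I would apply Corollary~\ref{K-M-B} to the convex weak$^*$ compact metrizable set $K$ viewed inside the \textnormal{l.c.t} space $(E^*,\textnormal{Weak}^*)$: there $Aff(K)$ is exactly the space of weak$^*$-continuous affine functions on $K$, so the notion of affine exposed point used in Corollary~\ref{K-M-B} coincides with the one in the statement. This yields $K=\overline{\textnormal{conv}}^{w^*}(\textnormal{AExp}(K))$, and Milman's theorem with $A=\textnormal{AExp}(K)$ gives the claimed inclusion. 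Part $(2)$ is identical after replacing the weak$^*$ topology of $E^*$ by the weak topology of $E$, using Corollary~\ref{GKlee}$(2)$ (where, by Mazur's lemma, the weak and norm closures of the convex hull agree, so no ambiguity arises) and Corollary~\ref{K-M-B} with $X=(E,\textnormal{Weak})$.

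There is no real obstacle here: the argument is essentially a bookkeeping assembly of results proved earlier. The only points demanding care are (a) verifying that the compactness and convexity hypotheses of Milman's theorem and of Corollaries~\ref{GKlee} and~\ref{K-M-B} are indeed satisfied in each of the two ambient \textnormal{l.c.t} spaces, and (b) keeping the topologies consistent throughout, so that ``continuous affine function'', ``closed convex hull'' and all closures are understood in the weak$^*$ topology of $E^*$ in part $(1)$ and in the weak topology of $E$ in part $(2)$.
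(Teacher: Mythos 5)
Your proposal is correct and follows essentially the same route as the paper: the paper's proof is precisely the combination of Corollary~\ref{GKlee}, Corollary~\ref{K-M-B} and the partial converse of the Krein--Milman theorem (Milman's theorem), applied in the \textnormal{l.c.t} spaces $(E^*,\textnormal{Weak}^*)$ and $(E,\textnormal{Weak})$. Your write-up merely makes explicit the inclusion chain and the reduction of both density claims to $\textnormal{Ext}(K)\subseteq\overline{A}$ via Milman's theorem, which the paper leaves implicit.
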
 
\begin{proof} First, note that the spaces $(E^*,\textnormal{Weak}^*)$ and $(E,\textnormal{Weak})$ are \textnormal{l.c.t} spaces. Combining the part $(1)$ (resp. the part $(2)$) of Corollary \ref{GKlee} with Corollary \ref{K-M-B} and the partial converse of the Krein-Milman theorem, we get the part $(1)$ (resp. the part $(2)$).
\end{proof}

\subsection{Proof of Theorem \ref{Aff-Exp0} and the Shilov boundary.}
In this subsection, we give the proof of the second main result. We need the following lemma from \cite{Ba-S}.

\begin{Lem} (See \cite{Ba-S}) \label{maxdiff} Let $Z$ be a Banach space and $ h, k : Z \rightarrow \R $ be two continuous and convex functions. Suppose that the function $z\rightarrow l(z):=max(h(z),k(z))$ is Fr\'echet (respectively, G\^ateaux) differentiable at some point $z_0\in Z$. Then either $h$ or $k$ (maybe both $h$ and $k$) is Fr\'echet (respectively, G\^ateaux) differentiable at $z_0$ and $l'(z_0)=h'(z_0)$ or $l'(z_0)=k'(z_0)$.
 \end{Lem}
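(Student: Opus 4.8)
The plan is to establish a slightly sharper one-sided statement that implies the lemma directly. Since $l(z_0)=\max(h(z_0),k(z_0))$, at least one of the two values equals the maximum; relabelling, I may assume $h(z_0)=l(z_0)$, and I will prove that \emph{this} $h$ is Fr\'echet (respectively, G\^ateaux) differentiable at $z_0$ with $h'(z_0)=l'(z_0)$. The only inputs will be: the pointwise inequality $h\le l$ together with the contact equality $h(z_0)=l(z_0)$, which traps the graph of $h$ below that of $l$ with equality at $z_0$; and convexity of $h$, which traps it above each of its supporting affine functionals at $z_0$. Squeezing $h$ between these two bounds transfers to $h$ whatever regularity $l$ has at $z_0$.

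For the G\^ateaux case I would set $p:=l'(z_0)\in Z^*$ and argue with directional derivatives. By convexity, $t\mapsto\big(h(z_0+tv)-h(z_0)\big)/t$ is nondecreasing for $t>0$ and bounded below, so the right directional derivative $h'(z_0;v)$ exists and is finite, and $v\mapsto h'(z_0;v)$ is subadditive and positively homogeneous. From $h\le l$ and $h(z_0)=l(z_0)$, dividing by $t>0$ and letting $t\to 0^+$, I get $h'(z_0;v)\le\langle p,v\rangle$ for every $v$. Running this for $v$ and for $-v$ and using $h'(z_0;v)+h'(z_0;-v)\ge h'(z_0;0)=0$ forces $h'(z_0;v)=\langle p,v\rangle$ for all $v$; since the left-hand limits $\lim_{t\to0^+}\big(h(z_0-tv)-h(z_0)\big)/(-t)=-h'(z_0;-v)$ equal $\langle p,v\rangle$ as well, $h$ is G\^ateaux differentiable at $z_0$ with derivative $p=l'(z_0)$.

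For the Fr\'echet case I would first invoke the G\^ateaux case (Fr\'echet differentiability of $l$ at $z_0$ entails G\^ateaux differentiability there), obtaining that $h$ is G\^ateaux differentiable at $z_0$ with $h'(z_0)=p$. Convexity then gives the supporting inequality $h(z_0+v)-h(z_0)-\langle p,v\rangle\ge 0$ for all $v$. Simultaneously, $h\le l$ and $h(z_0)=l(z_0)$ give $h(z_0+v)-h(z_0)-\langle p,v\rangle\le l(z_0+v)-l(z_0)-\langle p,v\rangle=o(\|v\|)$, the last equality being Fr\'echet differentiability of $l$ at $z_0$. Hence $0\le h(z_0+v)-h(z_0)-\langle p,v\rangle\le o(\|v\|)$, which is exactly Fr\'echet differentiability of $h$ at $z_0$ with derivative $p=l'(z_0)$. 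Note the argument never compared $h(z_0)$ with $k(z_0)$: if in fact $h(z_0)=k(z_0)$, the same reasoning applies verbatim to $k$, which accounts for the parenthetical possibility that both $h$ and $k$ are differentiable at $z_0$; whereas if $h(z_0)\neq k(z_0)$ then $l$ agrees with the larger of the two on a neighbourhood of $z_0$ and the conclusion is immediate for that one.

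I do not expect a genuine obstacle here; the only delicate point is inside the G\^ateaux step, where the one-sided bound $h'(z_0;v)\le\langle p,v\rangle$ is insufficient by itself and must be promoted to the equality $h'(z_0;v)=\langle p,v\rangle$ using the sublinearity of $v\mapsto h'(z_0;v)$, after which one must still check the two-sided difference-quotient limit before declaring $h$ G\^ateaux differentiable.
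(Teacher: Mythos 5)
Your argument is correct, and it reaches the conclusion by a squeeze that is close in spirit to the paper's but different in execution. The paper, after the same reduction to the case $h(z_0)=l(z_0)$, bounds the \emph{symmetric} second difference quotient $\bigl(h(z_0+z)+h(z_0-z)-2h(z_0)\bigr)/\|z\|$ between $0$ and the corresponding quotient for $l$, invokes the standard criterion that a convex function is Fr\'echet (resp.\ G\^ateaux) differentiable exactly when this quotient vanishes in the limit, and only then identifies the derivative in a second step by observing that $f:=h-l$ satisfies $f\le 0$, $f(z_0)=0$ and $f'(z_0)$ exists, hence $f'(z_0)=0$. You instead work at first order: in the G\^ateaux case you squeeze the sublinear function $v\mapsto h'(z_0;v)$ against the linear functional $p=l'(z_0)$ from above and use $h'(z_0;v)+h'(z_0;-v)\ge 0$ to force equality, and in the Fr\'echet case you sandwich $h(z_0+v)-h(z_0)-\langle p,v\rangle$ between the subgradient lower bound $0$ and the $o(\|v\|)$ upper bound coming from $h\le l$ with contact at $z_0$. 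Your route is somewhat more self-contained, since it does not presuppose the symmetric-difference characterization of differentiability for convex functions and it produces the identity $h'(z_0)=l'(z_0)$ simultaneously with differentiability rather than in a separate step; the paper's route is shorter if one takes that characterization for granted. Your closing remark correctly accounts for the two cases $h(z_0)\neq k(z_0)$ (where $l$ coincides with the larger function near $z_0$ by continuity) and $h(z_0)=k(z_0)$ (where the argument applies to both), matching the parenthetical in the statement.
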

\begin{proof}
We give the proof for the Fr\'echet differentiability, the G\^ateaux differentiability is similar. Suppose without loss of generality that  $l(z_0)=h(z_0)$ and let us prove that $h$ is Fr\'echet differentiable at $z_0$ and that $l'(z_0)=h'(z_0)$. For each $z\neq 0$ we have: $$0\leq\frac{h(z_0+z)+h(z_0-z)-2h(z_0)}{\|z\|}\leq \frac{l(z_0+z)+l(z_0-z)-2l(z_0)}{\|z\|}.$$
Since $l$ is convex and Fr\'echet differentiable at $z_0$, then the right-hand side in the above inequalities, tends to $0$ when $z$ tends to $0$. This implies that $h$ is Fr\'echet differentiable at $z_0$ by the convexity of $h$. Now, if we denote $f = h-l$, then
$f(z_0) = 0$, $f\leq 0$ and $f'(z_0)$ exists. Thus, for all $z\in Z$
$$f'(z_0)(z) = \lim_{t\longrightarrow 0^+}\frac{1}{t}(f(z_0 + tz)-f(z_0))\leq 0.$$
This implies that $f'(z_0)=0$. Thus $h'(z_0)=l'(z_0)$.
\end{proof}
We also need to establish the following lemma.
\begin{Lem} \label{Aff1} Let $K$ be a compact metric set and $(\Phi,\|.\|_{\infty})$ be a closed Banach subspace of $(C(K),\|.\|_{\infty})$ which separates the points of $K$ and contains the constants. Then, the following assertions are equivalente.

$(1)$ A point $Q\in B_{\Phi^*}$ is a weak$^*$ exposed point

$(2)$ there exists a $\Phi$-exposed point $k\in \Phi\textnormal{Exp}(K)$ such that $Q=\pm \delta_k$, where $\delta_k: \varphi \mapsto \varphi(k)$ for all $\varphi\in \Phi$.
\end{Lem}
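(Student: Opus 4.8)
The plan is to establish the equivalence by exploiting the duality between $\Phi$-exposed points of $K$ and weak$^*$ exposed points of $B_{\Phi^*}$ via the Dirac map $k\mapsto\delta_k$, using the differentiability machinery already set up. The natural object to differentiate is the norm (or an associated sup-type function) on $\Phi$, whose subdifferential structure at a point records exactly which functionals in $B_{\Phi^*}$ are "reached". Concretely, since $\Phi$ contains the constants and separates points of $K$, the compact space $(\delta(K),\mathrm{Weak}^*)$ sits inside $B_{\Phi^*}$, and one checks that $B_{\Phi^*}=\overline{\mathrm{conv}}^{\mathrm{w}^*}(\pm\delta(K))$ (this is essentially the statement that $\|\varphi\|_\infty=\max_{k\in K}|\varphi(k)|=\max(\sup_k\varphi(k),\sup_k(-\varphi(k)))$ together with the Hahn-Banach separation theorem); consequently, the only candidates for extreme — hence exposed — points of $B_{\Phi^*}$ are points of $\pm\delta(K)$.

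For the implication $(2)\Rightarrow(1)$: suppose $k\in\Phi\mathrm{Exp}(K)$, so some $\varphi_0\in\Phi$ satisfies $\varphi_0(k)>\varphi_0(y)$ for all $y\in K\setminus\{k\}$. I would show $\delta_k$ is weak$^*$ exposed in $B_{\Phi^*}$ by the functional $\varphi_0$ (viewed, via the canonical embedding, as an element of $\Phi^{**}$ acting on $\Phi^*$): indeed $\langle\varphi_0,\delta_k\rangle=\varphi_0(k)$, and for $Q\in B_{\Phi^*}$ with $\langle\varphi_0,Q\rangle=\varphi_0(k)$, the representation of $Q$ as a weak$^*$-limit of convex combinations of points of $\pm\delta(K)$, combined with the strict inequality $\varphi_0(k)>\varphi_0(y)$ on $K\setminus\{k\}$ (and $\varphi_0(k)>-\varphi_0(y)$ once one fixes the sign, using that $\varphi_0$ — or a translate by a large constant, which is allowed — is positive and $\varphi_0(k)$ is its strict max), forces all the mass to concentrate at $\delta_k$, giving $Q=\delta_k$. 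The symmetric functional $-\varphi_0$ exposes $-\delta_k$. Care is needed to get a single $\varphi_0$ that strictly separates $\delta_k$ from both $\delta_y$ and $-\delta_y$ for all $y\ne k$; replacing $\varphi_0$ by $\varphi_0+c$ with $c$ large (permitted since constants lie in $\Phi$, and this does not change the exposing property on $K$) arranges $\varphi_0>0$ on $K$ with $\varphi_0(k)=\|\varphi_0\|_\infty$ the strict sup, which does the job.

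For the harder implication $(1)\Rightarrow(2)$: let $Q\in B_{\Phi^*}$ be weak$^*$ exposed, say by $F\in\Phi^{**}$ with $\langle F,Q\rangle>\langle F,R\rangle$ for all $R\in B_{\Phi^*}\setminus\{Q\}$. Since $B_{\Phi^*}=\overline{\mathrm{conv}}^{\mathrm{w}^*}(\pm\delta(K))$ and an exposed point is extreme, Milman's theorem (the partial converse of Krein-Milman, quoted in the introduction) gives $Q\in\overline{\pm\delta(K)}^{\mathrm{w}^*}=\pm\delta(K)$, the last equality because $\delta:(K,d)\to(\delta(K),\mathrm{w}^*)$ is a homeomorphism by Lemma~\ref{LemmP} (property $P^G$) and $K$ is compact, so $\pm\delta(K)$ is weak$^*$ compact. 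Thus $Q=\epsilon\delta_k$ for some $k\in K$ and $\epsilon\in\{+1,-1\}$; replacing $F$ by $\epsilon F$ we may take $\epsilon=+1$. It remains to produce an element of $\Phi$, not merely of $\Phi^{**}$, that $\Phi$-exposes $K$ at $k$. Here I would use Lemma~\ref{MBAD} (or rather its consequence that exposing functionals are generic): but more directly, the functional $F\in\Phi^{**}$ restricted to $\delta(K)$ is a weak$^*$-continuous function on the compact set $\delta(K)$ attaining its strict maximum at $\delta_k$; I need to approximate $F$ weak$^*$-continuously on $\delta(K)$ by some $\psi\in\Phi$ while preserving the strict maximum at $\delta_k$. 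This is exactly where the key lemma enters: apply Lemma~\ref{MBAD} with $Y=\Phi$ and $f=-\psi_0+i_K$ where $\psi_0\in\Phi$ is chosen (using that $\Phi$ is weak$^*$-dense in $\Phi^{**}$ restricted to the compact $\delta(K)$, or by a direct Hahn-Banach/partition argument) close enough to $F$ on $K$ that $\psi_0$ nearly attains its max at $k$; then Lemma~\ref{MBAD} perturbs $\psi_0$ by a small $\eta\in\Phi$ so that $\psi_0+\eta$ has a strict maximum on $K$, and a separation/continuity argument shows this strict maximum must be at (a point close to, hence equal to) $k$ — or more robustly, one shows directly that the set of $\psi\in\Phi$ exposing $K$ at points near $k$ is nonempty and then uses the openness in Theorem~\ref{Th1}(i). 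The main obstacle is precisely this last step: upgrading an abstract exposing functional in the bidual $\Phi^{**}$ to a concrete exposing function in $\Phi$ itself, which is why Lemma~\ref{MBAD}, and the compactness/metrizability of $K$ that makes it available, are essential — without them one only knows $Q$ is extreme, not exposed, in $B_{\Phi^*}$, and correspondingly $k$ would only be $\Phi$-extremal, not $\Phi$-exposed.
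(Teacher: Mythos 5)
Your direction $(1)\Rightarrow(2)$ contains a genuine gap, and it originates in a misreading of the definition: in this paper (see the introduction), a weak$^*$ exposed point of $B_{\Phi^*}$ is by definition exposed by a functional \emph{from the predual} $\Phi$, not by an arbitrary $F\in\Phi^{**}$. Because you take the exposing functional in $\Phi^{**}$, you are forced into the final "upgrading" step, and that step does not work as written: Goldstine's theorem gives only weak$^*$ density of $\Phi$ in $\Phi^{**}$, which does not yield uniform approximation of $F$ on $\delta(K)$; and even if $\psi_0\in\Phi$ were uniformly close to $F$ on $K$, neither near-maximality nor the perturbation from Lemma~\ref{MBAD} controls \emph{where} the strict maximum of $\psi_0+\eta$ sits --- the phrase "close to, hence equal to $k$" is a non sequitur, since strict maxima are not stable under small perturbations in this sense. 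Ironically, once the correct definition is used the whole difficulty evaporates: your Milman step legitimately gives $Q=\epsilon\delta_k\in\pm\delta(K)$ (since $\pm\delta(K)$ is weak$^*$ compact and $B_{\Phi^*}=\overline{\textnormal{conv}}^{\textnormal{w}^*}(\pm\delta(K))$), and then the exposing $\varphi\in\Phi$, tested against the points $\epsilon\delta_y\in B_{\Phi^*}$ for $y\neq k$, satisfies $(\epsilon\varphi)(k)>(\epsilon\varphi)(y)$, i.e.\ $\epsilon\varphi$ already $\Phi$-exposes $K$ at $k$. No appeal to Lemma~\ref{MBAD} or to the bidual is needed in this direction.

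Your direction $(2)\Rightarrow(1)$ is essentially sound in outline but under-argued at the "mass concentration" step: a weak$^*$ limit of convex combinations only approximately attains the maximum, so you need either a representing probability measure on the compact set $\pm\delta(K)$, or the cleaner face argument (the maximizing set of $\langle\varphi_0,\cdot\rangle$ on $B_{\Phi^*}$ is a weak$^*$ closed extremal face whose extreme points lie, by Milman, in $\pm\delta(K)$, and after translating by a large constant the only candidate is $\delta_k$). If both repairs are made, your proof is correct and follows a genuinely different route from the paper's: the paper proves both implications through differentiability machinery --- Phelps' equivalence between weak$^*$ exposedness of $B_{\Phi^*}$ and G\^ateaux differentiability of $\|.\|_\infty$, Lemma~\ref{maxdiff} on differentiability of a maximum of convex functions, and the duality of [Theorem 2.8, \cite{Ba}] identifying G\^ateaux differentiability of $0^{\times}$ with strong minima on $K$ --- whereas your route uses only the Krein--Milman/Milman theorems plus direct separation, which is more elementary but yields no differentiability information as a by-product.
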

\begin{proof}
$(1)\Longrightarrow (2)$. Let $Q\in w^*\textnormal{Exp}( B_{\Phi^*})$, so there exists $\varphi \in \Phi$ which weak$^*$ expose $ B_{\Phi^*}$ at $Q$. It follows from [Proposition 6.9., \cite{Ph}] that the norm $\|.\|_{\infty}$ is G\^{a}teaux differentiable at $\varphi$ with G\^{a}teaux derivative equal to $Q$. On the other hand it is clear that $\|\psi\|_{\infty}=\max(0^{\times}(\psi),0^{\times}(-\psi))$ for all $\psi\in \Phi$, where $0^{\times}(\psi)=\sup_{k\in K} \varphi (k)$ for all $\varphi\in \Phi$. Thus, from Lemma \ref{maxdiff} we have that either $\psi\mapsto 0^{\times}(\psi)$ or $\psi\mapsto 0^{\times}(-\psi)$ is G\^{a}teaux differentiable at $\varphi$ with G\^{a}teaux derivative equal to $Q$. Suppose in the first case that is the function $\psi\mapsto 0^{\times}(\psi)$ which is G\^{a}teaux differentiable at $\varphi$. Thus, from Lemma \ref{LemmP} and [Theorem 2.8 \cite{Ba}] applied with the function $f=0$, we get that there exists $k\in K$ such that $\varphi$ has a strong maximum at $k$ and that $Q=\delta_k$. Thus, in this case $k$ is $\Phi$-exposed by $\varphi$ and $Q= \delta_k$. For the second case, where it is the function $\psi\mapsto 0^{\times}(-\psi)$ which is G\^{a}teaux differentiable at $\varphi$ with G\^{a}teaux derivative equal to $Q$, in a similar way, using Lemma \ref{LemmP}, [Theorem 2.8 \cite{Ba}] and the chain rule formula we obtain that there exists some $k\in K$ such that $-\varphi$ has a strong maximum at $k$ (so that $k$ is  $\Phi$-exposed point) and $Q=-\delta_k$.

\noindent $(2)\Longrightarrow (1)$. Suppose that $k\in \Phi\textnormal{Exp}(K)$. There exists $\varphi \in \Phi$ which $\Phi$-exposes $k$. Thus $-\varphi$ has a strict minimum at $k$, equivalent to a strong minimum at $k$, since $K$ is compact metric set. We can find a real number $r$ such that $-(\varphi+r)$ has also a strong minimum at $k$ and such that $\varphi+r >1$ on $K$. Hence, the function $0^{\times}$ coincides with $\|.\|_{\infty}$ on an open neighborhood of $\varphi+r\in  \Phi$. Since $-(\varphi+r)$ has a strong minimum at $k$, [Theorem 2.8 \cite{Ba}] asserts that $0^{\times}$ and so also $\|.\|_{\infty}$ is G\^{a}teaux differentiable at $\varphi+r$ with G\^{a}teaux derivative equal to $\delta_k$. It follows from [Proposition 6.9., \cite{Ph}], that $\delta_k$ is weak$^*$ exposed by 
$\varphi+r$. Thus $\delta_k \in w^*\textnormal{Exp}(B_{\Phi^*})$. By the symmetry of $B_{\Phi^*}$, we also have that $-\delta_k \in w^*\textnormal{Exp}( B_{\Phi^*})$.
\end{proof}
{\bf A. The second main result.}
Now, we give the proof of the second main result.
\begin{proof}[Proof of Theorem \ref{Aff-Exp0}]  The first part is given by Lemma \ref{Aff1}. Now, since $(\Phi,\|.\|_{\infty})$ is separable, the weak$^*$ compact set $( B_{\Phi^*},\textnormal{Weak}^*)$ is  metrizable. Thus, from Corollary \ref{GKlee} applied to the convex compact metrizable set $(B_{\Phi^*},\textnormal{Weak}^*)$, we have that 
$$ B_{\Phi^*}= \overline{\textnormal{conv}}^{w^*}(w^*\textnormal{Exp}( B_{\Phi^*})).$$
Thus, we have that
$$ B_{\Phi^*}= \overline{\textnormal{conv}}^{w^*}(w^*\textnormal{Exp}( B_{\Phi^*}))=\overline{\textnormal{conv}}^{w^*}(\pm\delta(\Phi\textnormal{Exp}(K))).$$ This concludes the proof.
\end{proof}

\vskip5mm

We deduce immediately the following corollaries. Replacing $\Phi$ by $Aff(K)$ in Theorem \ref{Aff-Exp0}, we obtain:
\begin{Cor} \label{Aff-Exp} Let $K$ be a compact metrizable convex subset of a \textnormal{l.c.t} space $X$. Then,
$$w^*\textnormal{Exp}(B_{(Aff(K))^*})=\pm\delta(\textnormal{AExp}(K)),\hspace{2mm}$$ and 
$$B_{(Aff(K))^*}=\overline{\textnormal{conv}}^{\textnormal{w}^*}(\pm\delta(\textnormal{AExp}(K))),$$
where $\pm\delta(\textnormal{AExp}(K)):=\lbrace \pm\delta_k/ k\in \textnormal{AExp}(K)\rbrace$.  
\end{Cor}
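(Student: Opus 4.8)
The plan is to deduce Corollary~\ref{Aff-Exp} from Theorem~\ref{Aff-Exp0} by specializing to $\Phi=Aff(K)$. Since $K$ is compact and metrizable, I would first fix a metric $d$ on $K$ inducing its topology, so that $(K,d)$ is a compact metric space. Then I would check that $(\Phi,\|.\|_{\infty})=(Aff(K),\|.\|_{\infty})$ meets all the hypotheses of Theorem~\ref{Aff-Exp0}: it is a linear subspace of $(C(K),\|.\|_{\infty})$, and it is norm-closed there, because if $\varphi_n\to\varphi$ uniformly with each $\varphi_n$ affine then, passing to the limit in the identity $\varphi_n(\lambda x+(1-\lambda)y)=\lambda\varphi_n(x)+(1-\lambda)\varphi_n(y)$, the limit $\varphi$ is again (continuous and) affine; hence $(Aff(K),\|.\|_{\infty})$ is a closed Banach subspace of $(C(K),\|.\|_{\infty})$. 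It contains the constant functions by definition, and it separates the points of $K$: since $X$ is a Hausdorff locally convex space, $X^*$ separates the points of $X$ by the Hahn--Banach theorem, and $\{x^*_{|K}:x^*\in X^*\}\subseteq Aff(K)$.

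Applying Theorem~\ref{Aff-Exp0} with this choice of $\Phi$ then gives
$$w^*\textnormal{Exp}(B_{(Aff(K))^*})=\pm\delta\big(Aff(K)\textnormal{Exp}(K)\big), \qquad B_{(Aff(K))^*}=\overline{\textnormal{conv}}^{\textnormal{w}^*}\big(\pm\delta(Aff(K)\textnormal{Exp}(K))\big).$$
It remains to identify $Aff(K)\textnormal{Exp}(K)$ with $\textnormal{AExp}(K)$. But this is immediate by comparing definitions: according to Definition~\ref{Def3}, a point $x\in K$ lies in $\Phi\textnormal{Exp}(K)$ for $\Phi=Aff(K)$ exactly when some $\varphi\in Aff(K)$ has a strict maximum on $K$ at $x$, which is precisely the defining property of $x\in\textnormal{AExp}(K)$ in Definition~\ref{Def1}. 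Substituting $Aff(K)\textnormal{Exp}(K)=\textnormal{AExp}(K)$ into the two displayed identities yields the statement of the corollary.

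There is no genuinely difficult step here: the corollary is essentially a dictionary translation of Theorem~\ref{Aff-Exp0} together with the tautological equality $Aff(K)\textnormal{Exp}(K)=\textnormal{AExp}(K)$. The only points that deserve an explicit word are the norm-closedness of $Aff(K)$ inside $C(K)$ (needed so that $Aff(K)$ is itself a Banach space, as already observed in the proof of Proposition~\ref{Cor0}) and the appeal to Hahn--Banach to guarantee that $Aff(K)$ separates the points of $K$ in an arbitrary Hausdorff locally convex space.
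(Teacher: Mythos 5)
Your proposal is correct and follows exactly the route the paper takes: the corollary is obtained by specializing Theorem \ref{Aff-Exp0} to $\Phi=Aff(K)$, with the hypothesis checks (norm-closedness of $Aff(K)$ in $C(K)$, separation of points via Hahn--Banach, presence of the constants) and the identification $Aff(K)\textnormal{Exp}(K)=\textnormal{AExp}(K)$ being exactly the observations the paper relies on (the separation and closedness points already appear in the proof of Proposition \ref{Cor0}). Your write-up merely makes explicit what the paper leaves as "replacing $\Phi$ by $Aff(K)$", so there is nothing to add.
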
 
\vskip5mm
Replacing $\Phi$ by $C(K)$ in Theorem \ref{Aff-Exp0}, where $(K,d)$ is a compact metric space, and observing that $\Phi\textnormal{Exp}(K)=K$, since each point $k\in K$ is an exposed point by the continuous function $x\mapsto -d(x,k)$, we obtain:
\begin{Cor} \label{CK} Let $(K,d)$ be a compact metric space. Then,
$$w^*\textnormal{Exp}(B_{(C(K))^*})=\pm\delta(K),\hspace{2mm}$$  and
$$B_{(C(K))^*}=\overline{\textnormal{conv}}^{\textnormal{w}^*}(\pm\delta(K)).$$
\end{Cor}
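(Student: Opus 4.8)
The plan is to obtain Corollary~\ref{CK} as the special case of Theorem~\ref{Aff-Exp0} in which the separating subspace $\Phi$ is taken to be all of $C(K)$. First I would check that the hypotheses of Theorem~\ref{Aff-Exp0} are met: since $(K,d)$ is a compact metric space, $(C(K),\|.\|_{\infty})$ is a genuine Banach space, it trivially separates the points of $K$ (for $x\neq y$ the function $z\mapsto d(z,x)$ does the job), and it obviously contains the constant functions. Thus $\Phi=C(K)$ is an admissible choice and Theorem~\ref{Aff-Exp0} applies verbatim, giving $w^*\textnormal{Exp}(B_{(C(K))^*})=\pm\delta(\Phi\textnormal{Exp}(K))$ and $B_{(C(K))^*}=\overline{\textnormal{conv}}^{w^*}(\pm\delta(\Phi\textnormal{Exp}(K)))$.

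The only substantive point is then to identify $\Phi\textnormal{Exp}(K)$ when $\Phi=C(K)$, and here I would show $\Phi\textnormal{Exp}(K)=K$. Indeed, fix any $k\in K$ and consider the function $\varphi_k : x\mapsto -d(x,k)$, which belongs to $C(K)$. For every $x\in K\setminus\{k\}$ we have $\varphi_k(x)=-d(x,k)<0=\varphi_k(k)$, so $\varphi_k$ attains a strict maximum on $K$ at $k$; by Definition~\ref{Def3} this means $k\in \Phi\textnormal{Exp}(K)$. Since the reverse inclusion $\Phi\textnormal{Exp}(K)\subseteq K$ is automatic, we get $\Phi\textnormal{Exp}(K)=K$. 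Substituting this equality into the two displayed identities furnished by Theorem~\ref{Aff-Exp0} yields exactly $w^*\textnormal{Exp}(B_{(C(K))^*})=\pm\delta(K)$ and $B_{(C(K))^*}=\overline{\textnormal{conv}}^{w^*}(\pm\delta(K))$, which is the assertion of the corollary.

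There is essentially no obstacle here beyond bookkeeping: the whole content of the statement is already packaged in Theorem~\ref{Aff-Exp0}, and the ``hard part''---such as it is---is merely the elementary observation that the metric gives enough continuous functions to expose every point of $K$, so that the $\Phi$-exposed points fill up all of $K$. One should perhaps remark that $\pm\delta(K)=\{\pm\delta_k : k\in K\}$ is precisely the image of $K\sqcup(-K)$ under the canonical isometric embedding of $C(K)^*$, so the second identity recovers the classical description of the dual ball of $C(K)$ as the weak$^*$-closed convex hull of the signed Dirac masses, consistent with the Riesz representation picture of regular Borel measures of total variation at most one.
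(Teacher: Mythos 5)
Your proposal is correct and follows exactly the paper's route: the paper likewise obtains Corollary~\ref{CK} by taking $\Phi=C(K)$ in Theorem~\ref{Aff-Exp0} and observing that $\Phi\textnormal{Exp}(K)=K$ because each $k\in K$ is exposed by the continuous function $x\mapsto -d(x,k)$. Your additional verification of the hypotheses and the closing remark about Dirac masses are harmless elaborations of the same argument.
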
 
\vskip5mm
{\bf B. The Shilov boundary and the $\Phi$-exposed points.}
Let $K$ be a compact space and $(\Phi,\|.\|_{\infty})$ be a closed Banach subspace of $(C(K),\|.\|_{\infty})$ which separates the points of $K$. A subset $L$ of $K$ is said to be a norming subset for $\Phi$ if for every $\varphi\in \Phi$, we
have
$$\|\varphi\|_{\infty}=\sup_{x\in L}|\varphi(x)|.$$

A closed subset $C$ of $K$ is a norming subset for $\Phi$ if and only if $C$ is a boundary for $\Phi$, that is, for every $\varphi\in \Phi$, we have
$$\|\varphi\|_{\infty}=\max_{x\in C}|\varphi(x)|.$$
The Choquet boundary of $\Phi$, denoted $Ch(\Phi)$, is defined as
the set of all $x\in K$ such that $\delta_x$ is an extreme point of the unit ball of $\Phi^*$. It is well known that $Ch(\Phi)$ is a boundary for $\Phi$. (See [\cite{T}, p. 184]). When $\Phi$ admits a unique minimal closed boundary, it is called the Shilov boundary of $\Phi$ and is denoted by $\partial \Phi$. D. P. Milman
proved the existence of Silov boundary for every closed linear subspace of
$C(K)$, separating points of $K$ and containing the constants (See \cite{M1} and \cite{M2}). He also proved that in this case the Silov boundary concides with the closure of the Choquet boundary: $\partial \Phi=\overline{Ch(\Phi)}$. A proof of this result, due to H.S. Bear can be found in \cite{Be}. For other informations about boundary sets we refer to \cite{A1} and \cite{A2}. As a consequence of Theorem \ref{Aff-Exp0}, we prove below that if $K$ is a compact metric space, then the set of $\Phi$-exposed points $\Phi\textnormal{Exp}(K)$ is a norming subset for $\Phi$ and that, its closure coincides with the Shilov boundary of $\Phi$. Note that, $\Phi\textnormal{Exp}(K)\subseteq Ch(\Phi)$, but this inclusion is strict in general.
\begin{Cor} \label{Shilov} Let $K$ be a compact metric set and $(\Phi,\|.\|_{\infty})$ be a closed Banach subspace of $(C(K),\|.\|_{\infty})$ which separates the points of $K$ and contains the constants. Then, the set $\Phi\textnormal{Exp}(K)$ is a norming subset for $\Phi$ and we have that $$\partial \Phi= \overline{\Phi\textnormal{Exp}(K)}=\overline{Ch(\Phi)}.$$
\end{Cor}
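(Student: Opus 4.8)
The plan is to obtain Corollary~\ref{Shilov} as a quick consequence of Theorem~\ref{Aff-Exp0} and Lemma~\ref{Aff1}, using the classical facts about boundaries recalled just above the statement: a closed subset of $K$ is norming for $\Phi$ if and only if it is a boundary for $\Phi$; under the present hypotheses $\Phi$ admits a Shilov boundary $\partial\Phi$, characterised as the \emph{unique minimal} closed boundary; and $\partial\Phi=\overline{Ch(\Phi)}$ by D.~P. Milman's theorem. I will also use freely that $\delta_x\in B_{\Phi^*}$ for every $x\in K$, since $|\delta_x(\varphi)|=|\varphi(x)|\le\|\varphi\|_{\infty}$ for all $\varphi\in\Phi$.

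First I would prove that $\Phi\textnormal{Exp}(K)$ is norming. Fix $\varphi\in\Phi$; by the Hahn--Banach theorem $\|\varphi\|_{\infty}=\sup_{Q\in B_{\Phi^*}}|Q(\varphi)|=\sup_{Q\in B_{\Phi^*}}Q(\varphi)$, the last equality by symmetry of $B_{\Phi^*}$. The evaluation $Q\mapsto Q(\varphi)$ is weak$^*$ continuous and linear on $\Phi^*$, hence its supremum over any set coincides with its supremum over the weak$^*$ closed convex hull of that set (the sublevel sets of $Q\mapsto Q(\varphi)$ are weak$^*$ closed and convex). Applying the second assertion of Theorem~\ref{Aff-Exp0}, which gives $B_{\Phi^*}=\overline{\textnormal{conv}}^{w^*}(\pm\delta(\Phi\textnormal{Exp}(K)))$, I obtain
$$\|\varphi\|_{\infty}=\sup_{k\in\Phi\textnormal{Exp}(K)}\max\bigl(\varphi(k),-\varphi(k)\bigr)=\sup_{x\in\Phi\textnormal{Exp}(K)}|\varphi(x)|.$$
As $\varphi$ is arbitrary this says exactly that $\Phi\textnormal{Exp}(K)$ is a norming subset for $\Phi$ (and, applying it to a non-zero constant, that this set is non-empty).

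Next I would settle the boundary identity. Since $\Phi\textnormal{Exp}(K)\subseteq\overline{\Phi\textnormal{Exp}(K)}\subseteq K$, the closure $\overline{\Phi\textnormal{Exp}(K)}$ is again norming, hence a \emph{closed} boundary for $\Phi$; minimality of the Shilov boundary then yields $\partial\Phi\subseteq\overline{\Phi\textnormal{Exp}(K)}$. For the opposite inclusion, for $k\in\Phi\textnormal{Exp}(K)$ Lemma~\ref{Aff1} gives $\delta_k\in w^*\textnormal{Exp}(B_{\Phi^*})\subseteq\textnormal{Ext}(B_{\Phi^*})$, so $k\in Ch(\Phi)$ by the very definition of the Choquet boundary; thus $\Phi\textnormal{Exp}(K)\subseteq Ch(\Phi)$ and $\overline{\Phi\textnormal{Exp}(K)}\subseteq\overline{Ch(\Phi)}=\partial\Phi$. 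Putting the two inclusions together gives $\partial\Phi=\overline{\Phi\textnormal{Exp}(K)}=\overline{Ch(\Phi)}$, which is the assertion.

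There is no serious obstacle left once Theorem~\ref{Aff-Exp0} and Lemma~\ref{Aff1} are available: those two statements carry the whole analytic content (the description of the weak$^*$ exposed points of $B_{\Phi^*}$ and the recovery of $B_{\Phi^*}$ as a weak$^*$ closed convex hull). The only places where a moment's care is needed are the standard observation that a weak$^*$ continuous linear functional has the same supremum over a set and over its weak$^*$ closed convex hull, and the correct appeal to the \emph{minimality}---not merely the existence---of the Shilov boundary in order to turn ``$\overline{\Phi\textnormal{Exp}(K)}$ is a closed boundary'' into ``$\partial\Phi\subseteq\overline{\Phi\textnormal{Exp}(K)}$''.
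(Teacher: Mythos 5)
Your proof is correct. The first half (that $\Phi\textnormal{Exp}(K)$ is norming, via Hahn--Banach, Theorem~\ref{Aff-Exp0}, and the fact that a weak$^*$ continuous linear functional has the same supremum over a set and over its weak$^*$ closed convex hull) is essentially identical to the paper's argument, as is the use of minimality to get $\partial\Phi\subseteq\overline{\Phi\textnormal{Exp}(K)}$. Where you genuinely diverge is the reverse inclusion. The paper proves $\Phi\textnormal{Exp}(K)\subseteq\partial\Phi$ directly: assuming $k_0\in\Phi\textnormal{Exp}(K)\setminus\partial\Phi$ with exposing functional $\varphi$, it takes $k_1\in\partial\Phi$ maximizing $\varphi$ on the compact set $\partial\Phi$, adds a constant $r$ to make $\varphi+r\geq 0$, and derives the contradiction $\|\varphi+r\|_{\infty}>\|\varphi+r\|_{\infty}$ from the boundary property of $\partial\Phi$. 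You instead route through the Choquet boundary: Lemma~\ref{Aff1} gives $\delta_k\in w^*\textnormal{Exp}(B_{\Phi^*})\subseteq\textnormal{Ext}(B_{\Phi^*})$, hence $k\in Ch(\Phi)$ by definition, and then $\overline{\Phi\textnormal{Exp}(K)}\subseteq\overline{Ch(\Phi)}=\partial\Phi$ by Milman's identification. Both are valid; the paper's version is self-contained for this inclusion (it needs only existence and minimality of $\partial\Phi$, using $\partial\Phi=\overline{Ch(\Phi)}$ solely to append the last equality of the display), while yours is shorter, reuses Lemma~\ref{Aff1}, and makes explicit the inclusion $\Phi\textnormal{Exp}(K)\subseteq Ch(\Phi)$ that the paper only remarks on in passing --- at the cost of making the whole chain depend on Milman's theorem rather than just its final link.
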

\begin{proof} By the Hahn-Banach theorem, for each $\varphi\in \Phi$, we have $$\|\varphi\|_{\infty}=\sup_{Q\in B_{\Phi^*}} \langle Q, \varphi\rangle.$$ By Theorem \ref{Aff-Exp0}, we have that $$\|\varphi\|_{\infty}=\sup_{Q\in\overline{\textnormal{conv}}^{\textnormal{w}^*}(\pm\delta(\Phi\textnormal{Exp}(K)))}\langle Q, \varphi\rangle.$$ Since the map $\hat{\varphi}: Q\mapsto \langle Q, \varphi\rangle$ is linear and weak$^*$ continuous, we obtain that $$\|\varphi\|_{\infty}=\sup_{Q\in \pm\delta(\Phi\textnormal{Exp}(K))}\langle Q, \varphi\rangle=\sup_{k\in \Phi\textnormal{Exp}(K)}|\varphi(k)|.$$
Thus, the set $\Phi\textnormal{Exp}(K)$ is a norming subset for $Y$. It follows that $\overline{\Phi\textnormal{Exp}(K)}$ is a boundary for $\Phi$. It is clear that $\partial \Phi\subseteq \overline{\Phi\textnormal{Exp}(K)}$, since $\partial \Phi$ is the minimal closed boundary. We prove that $\Phi\textnormal{Exp}(K)\subseteq \partial \Phi$. Suppose that the contrary hold, there exists $k_0\in \Phi\textnormal{Exp}(K)$ such that $k_0\not\in \partial \Phi$. Thus there exists $\varphi \in \Phi$ that expose $K$ at $k_0$ i.e. $\varphi(k_0)> \varphi(k)$ for all $k\in K\setminus \lbrace k_0\rbrace$. On the other hand, since $\partial \Phi$ is compact, there exists $k_1\in \partial \Phi$ such that $\varphi(k_1)=\sup_{k\in \partial \Phi} \varphi(k)$. Thus $\varphi(k_0)> \varphi(k_1)=\sup_{k\in \partial \Phi} \varphi(k)$. Since $\Phi$ contain the constant, there exists $r\in \R$ such that $\varphi+r\in \Phi$ and $\varphi+r \geq 0$. Hence, we have that 
$$\|\varphi+r\|_{\infty}=\varphi(k_0)+ r> \varphi(k_1) +r=\sup_{k\in \partial \Phi} (\varphi(k) + r)=\|\varphi+r\|_{\infty}$$

which is a contradiction. Thus, $\Phi\textnormal{Exp}(K)\subseteq \partial \Phi$ and so we have $\overline{\Phi\textnormal{Exp}(K)}= \partial \Phi$. 
\end{proof}

\section{Apendix.} \label{Apendix}
In this section, we give some additional properties about remarkable points and the Krein-millman theorem.  
\subsection{Comparison between exposed, affine exposed and extreme points. Examples.} \label{Example}

Let $K$ be a convex subset of a \textnormal{l.c.t} space $X$. It is easy to see that we always have
$$\textnormal{Exp}(K)\subseteq \textnormal{AExp}(K)\subseteq \textnormal{Ext}(K).$$
This section is devoted to give examples showing that these inclusions are strict in general.
\vskip5mm

{\bf A) Example where $\textnormal{Exp}(K)\varsubsetneq \textnormal{AExp}(K)$.}  The cube $[-1,1]^{\aleph_0}$ in the locally convex separable metrizable space $\R^{\aleph_0}$, has no exposed points however the set of its affine exposed points is nonempty. Indeed, for example the point $b=(1,1,1,...)$ is affine exposed in $[-1,1]^{\aleph_0}$ by the affine continuous  map defined on $[-1,1]^{\aleph_0}$ by $\varphi: (x_1,x_2,x_3,...) \mapsto \sum_{n\geq 0} 2^{-n} x_n$.  

A slight change of the set $[-1,1]^{\aleph_0}$, gives also an example where $\emptyset \neq \textnormal{Exp}(K)\neq \textnormal{AExp}(K)$. For example we can take the convex compact set $K:=\lbrace ta+(1-t)k/ t\in [0,1], k\in [-1,1]^{\aleph_0} \rbrace$, where $a=(-2,0,0,0,...)$. In this case the point $a$ is exposed by the continuous functional $x^*: (x_1,x_2,x_3,...) \mapsto -x_1$, but the point $b=(1,1,1,...)$ is not an exposed point. However, $b$ is affine exposed by the affine continuous  map defined on $K$ by $\varphi: (x_1,x_2,x_3,...) \mapsto \sum_{n\geq 0} 2^{-n} x_n$.
\vskip5mm
{\bf B) Example where $\textnormal{Exp}(K)= \textnormal{AExp}(K)$.} Let $K$ be a convex compact subset of an $\textnormal{l.c.t}$ space. Clearly, all translates of continuous linear functionals are
elements of $\textit{Aff}(K)$, but the converse in not true in general (see the above example. See also \cite{P} page 22.). However, we do have the following relationship.

\begin{Prop} \label{Aff} (\cite{P}, Proposition 4.5) Assume that $K$ is a compact convex subset of an \textnormal{l.c.t} space $X$, then
$$L(K):=\left\{ a \in Aff(K) : a = x^*_{|K} + r \hspace{1mm}for \hspace{1mm}some\hspace{1mm} x^* \in X^* \hspace{1mm}and \hspace{1mm}some\hspace{1mm} r \in \R\right\}$$
is dense in $(Aff(K), \|.\|_{\infty})$.
\end{Prop}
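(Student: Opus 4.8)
The plan is to show that $L(K)$ is a linear subspace of $Aff(K)$ whose closure is all of $Aff(K)$, by applying the Hahn--Banach separation theorem in the dual of the Banach space $(Aff(K),\|.\|_{\infty})$. First I would observe that $L(K)$ is indeed a linear subspace: it is closed under addition and scalar multiplication because the restrictions $x^*_{|K}$ form a linear space and the constants do too. Now suppose, towards a contradiction, that $\overline{L(K)}^{\,\|.\|_{\infty}} \neq Aff(K)$. Then there exists $a_0 \in Aff(K)$ not in the closed subspace $\overline{L(K)}$, and by Hahn--Banach there is a continuous linear functional $\mu \in (Aff(K))^*$ with $\mu = 0$ on $L(K)$ but $\mu(a_0) \neq 0$.

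The key step is to identify $(Aff(K))^*$ concretely enough to derive a contradiction from $\mu \equiv 0$ on $L(K)$. Since $K$ is compact convex, $Aff(K)$ with the sup norm is a closed subspace of $C(K)$, and by the Hahn--Banach extension theorem $\mu$ extends to a bounded linear functional on $C(K)$, hence (by Riesz representation) is given by a signed Radon measure. Writing the Jordan decomposition and normalizing, one reduces to the statement: there exist two probability measures $\nu_1, \nu_2$ on $K$ (with possibly different total masses, i.e. $\mu = \lambda_1\nu_1 - \lambda_2\nu_2$) whose barycenters $b(\nu_1), b(\nu_2) \in K$ (these exist since $K$ is compact convex in an l.c.t.\ space) satisfy, for every $x^* \in X^*$ and every $r \in \R$,
$$\lambda_1 \big( x^*(b(\nu_1)) + r\big) = \lambda_2\big( x^*(b(\nu_2)) + r\big).$$
Taking $x^* = 0$ forces $\lambda_1 = \lambda_2 =: \lambda$, and if $\lambda = 0$ then $\mu = 0$, contradicting $\mu(a_0)\neq 0$; so $\lambda > 0$ and then $x^*(b(\nu_1)) = x^*(b(\nu_2))$ for all $x^* \in X^*$, whence $b(\nu_1) = b(\nu_2)$ because $X^*$ separates points of the Hausdorff l.c.t.\ space $X$. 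But by the defining property of barycenters, $a(b(\nu_i)) = \int_K a \, d\nu_i$ for every $a \in Aff(K)$, so $\mu(a) = \lambda\big(\int a\,d\nu_1 - \int a\, d\nu_2\big) = \lambda\big(a(b(\nu_1)) - a(b(\nu_2))\big) = 0$ for all $a \in Aff(K)$, contradicting $\mu(a_0) \neq 0$.

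The main obstacle is the rigorous handling of barycenters: one must know that every probability measure on the compact convex set $K$ has a barycenter in $K$ and that continuous affine functions commute with the barycenter map. This is standard (it is, for instance, the content of the existence half of Choquet theory, and is exactly where Phelps's book situates this proposition), so I would simply cite it rather than reprove it. A cleaner alternative, avoiding measure theory, would be to argue directly: if $a_0 \notin \overline{L(K)}$, separate the point $a_0$ from the subspace in $(Aff(K),\|.\|_{\infty})$ and use the fact that the weak-$*$ closure of $\{\pm\delta_k : k \in K\}$ in $B_{(Aff(K))^*}$ contains all extreme points of that ball, together with the Krein--Milman theorem, to represent the separating functional; but this essentially re-derives the barycenter machinery, so the measure-theoretic route is the most economical. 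Either way, the crux is that $X^*$ separating points of $X$ promotes ``$x^*(b(\nu_1)) = x^*(b(\nu_2))$ for all $x^*$'' to ``$b(\nu_1) = b(\nu_2)$'', which is what collapses the two measures when tested against arbitrary affine functions.
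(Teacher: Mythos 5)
The paper itself gives no proof of this statement: it is quoted directly from Phelps (\cite{P}, Proposition 4.5), whose proof is a short separation argument with no measure theory. Given $a\in Aff(K)$ and $\varepsilon>0$, the graphs $J_1=\{(x,a(x)):x\in K\}$ and $J_2=\{(x,a(x)+\varepsilon):x\in K\}$ are disjoint compact convex subsets of $X\times\R$; a continuous linear functional $L(x,t)=x^*(x)+ct$ strictly separating them must have $c>0$, and $(\alpha-x^*)/c$ is then a uniform $\varepsilon$-approximation of $a$ by an element of $L(K)$. Your route --- annihilator of $L(K)$ in $(Aff(K))^*$, Hahn--Banach extension to a signed Radon measure, Jordan decomposition, and equality of barycenters --- is a genuinely different dual argument, and its individual steps (equality of the total masses from testing against constants, $b(\nu_1)=b(\nu_2)$ because $X^*$ separates points of $X$) are sound. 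The one step you must not merely wave at is the identity $a(b(\nu_i))=\int_K a\,d\nu_i$ for \emph{every} $a\in Aff(K)$ rather than only for $a\in L(K)$: in Phelps's own development this identity is \emph{deduced from} Proposition 4.5 by density, so citing it as standard in the form you need is where a circularity would hide. It can be proved independently of the density --- for instance, push $\nu_i$ forward under $x\mapsto(x,a(x))$ onto the compact convex graph of $a$ in $X\times\R$ and observe that the barycenter of the image measure equals $(b(\nu_i),\int_K a\,d\nu_i)$ and must lie on that graph; alternatively apply the Jensen inequalities to $a$ viewed as both a lower semicontinuous convex and an upper semicontinuous concave function --- and one such argument (or a reference proving the formula without the density) should be inserted. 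With that repair your proof is complete; Phelps's argument is the more elementary and constructive of the two, while yours trades that for the barycenter machinery it imports.
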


If $(E,\|.\|)$ is a Banach space and $E^*$ is its topological dual, the space $X=(E^*,\textnormal{Weak}^*)$ is a $\textnormal{l.c.t}$ space. It is well know that in this case we have that $X^* = E$ (See for instance [Corollary 224., \cite{HHZ}]). In this case, the exposed points of a subset of $X$ coincides, by definition, with the weak$^*$ exposed points and the closure  of a subset coincides with the weak$^*$ closure. 

\begin{Prop} \label{int-nvide} Let $E$ be a Banach space. Let $K$ be a convex weak$^*$ compact subset of $E^*$ such that the norm interior of $K$ is nonempty. Let $X$ be the $\textnormal{l.c.t}$ space $(E^*,\textnormal{Weak}^*)$. Then, $X^*=E$ and $w^*\textnormal{Exp}(K)= \textnormal{AExp}(K)$.

\end{Prop}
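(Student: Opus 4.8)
The plan is to establish the nontrivial inclusion $\textnormal{AExp}(K)\subseteq w^*\textnormal{Exp}(K)$; the reverse one is immediate, since if $y\in E=X^*$ exposes $K$ at $x$, then $\hat{y}_{|K}$, where $\hat{y}(x^*):=x^*(y)$, is an affine weak$^*$-continuous function strictly maximized on $K$ at $x$, so $x\in\textnormal{AExp}(K)$. (The recollection $X^*=E$ is the cited fact [Corollary 224., \cite{HHZ}].) For the hard inclusion I would prove the following strengthening of Proposition \ref{Aff}, valid in the present setting: \emph{every $\tau\in Aff(K)$ has the form $\tau=\hat{y}_{|K}+c$ for some $y\in E$ and $c\in\R$}. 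Granting this, if some $\tau\in Aff(K)$ attains a strict maximum on $K$ at $x\in\textnormal{AExp}(K)$, then so does $\hat{y}$, whence $x\in w^*\textnormal{Exp}(K)$.

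To prove the strengthening, fix $x_0$ in the norm interior of $K$ and $\rho>0$ with $x_0+\rho B_{E^*}\subseteq K$ (here $B_{E^*}$ is the closed unit ball of $E^*$), and set $g(h):=\tau(x_0+h)-\tau(x_0)$ for $h\in\rho B_{E^*}$. Since $\tau$ is affine on $K$ and $x_0+h\in K$ whenever $\|h\|\leq\rho$, one checks at once that $g(0)=0$ and that $g$ satisfies Jensen's equation $g\big(\tfrac{h_1+h_2}{2}\big)=\tfrac12\big(g(h_1)+g(h_2)\big)$ on the convex set $\rho B_{E^*}$. As $\tau$ is weak$^*$-continuous on $K$ and $h\mapsto x_0+h$ is a weak$^*$-homeomorphism, $g$ is weak$^*$-continuous on the weak$^*$-compact set $\rho B_{E^*}$; since continuous solutions of Jensen's equation are affine and $g(0)=0$, the function $g$ extends, by positive homogeneity, to a unique linear functional $\phi$ on $E^*$ with $\phi_{|\rho B_{E^*}}=g$.

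The crucial step is to check that $\phi\in E$. By homogeneity $\phi$ is weak$^*$-continuous on $rB_{E^*}$ for every $r>0$, so $\ker\phi\cap rB_{E^*}$ is weak$^*$-closed for all $r$; by the Krein--\v{S}mulian theorem $\ker\phi$ is weak$^*$-closed, hence $\phi$ is weak$^*$-continuous and $\phi=\hat{y}$ for some $y\in E$. Setting $c:=\tau(x_0)-x_0(y)$, the affine functions $\tau$ and $\hat{y}_{|K}+c$ on $K$ agree on $x_0+\rho B_{E^*}$; given $z\in K$, their restrictions along the segment $t\mapsto(1-t)x_0+tz$, which lies in $K$, are affine functions of $t\in[0,1]$ that coincide for all small $t$ (as $(1-t)x_0+tz\in x_0+\rho B_{E^*}$ then), hence coincide at $t=1$, so $\tau(z)=z(y)+c$. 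This establishes the strengthening and thereby the proposition. I expect the main obstacle to be exactly the implication that $\phi$, being weak$^*$-continuous on every bounded subset of $E^*$, actually lies in $E$: this is the only non-elementary ingredient, and it is where the argument genuinely uses that $K$ sits inside a dual space, a merely norm-continuous linear functional on $E^*$ being insufficient.
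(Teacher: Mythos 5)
Your proof is correct, and the decisive step --- showing that every $\tau\in Aff(K)$ is of the form $\hat{y}_{|K}+c$ with $y\in E$, $c\in\R$ --- is reached by a genuinely different route than the paper's. The paper argues softly: it quotes Phelps's density result (Proposition \ref{Aff}, that $L(K)=\lbrace x^*_{|K}+r\rbrace$ is dense in $(Aff(K),\|.\|_\infty)$) and then observes that, precisely because $K$ is bounded with nonempty norm interior, $L(K)$ is isomorphic to $E\oplus\R$, hence complete, hence closed in $Aff(K)$; density plus closedness forces $L(K)=Aff(K)$. You instead construct the representing $y$ directly: restrict $\tau$ to a norm ball $x_0+\rho B_{E^*}\subseteq K$, extract a positively homogeneous additive $g$, extend it to a linear functional $\phi$ on $E^*$, and identify $\phi$ with an element of $E$ via bounded weak$^*$-continuity and the Krein--\v{S}mulian theorem, finally propagating the identity $\tau=\hat{y}_{|K}+c$ from the ball to all of $K$ along segments. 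Each approach has its merits: the paper's is shorter and leans on a standard density theorem plus a completeness trick, while yours is self-contained modulo Krein--\v{S}mulian and makes explicit exactly where the two hypotheses enter (the nonempty norm interior supplies the ball on which $\tau$ determines $\phi$, and the dual-space structure is what converts bounded weak$^*$-continuity of $\phi$ into membership in $E$). Your identification of that last implication as the only non-elementary ingredient is accurate; the remaining steps (Jensen's equation, homogeneity, the segment argument) are routine and correctly executed. One cosmetic remark: since the paper's $Aff(K)$ consists of fully affine functions, $g$ is affine outright and you do not actually need the ``continuous solutions of Jensen's equation are affine'' step, though invoking it does no harm.
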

\begin{proof}
The fact that $X^*=E$, follows from [Corollary 224., \cite{HHZ}]. Now, let $a$ be a point in the interior of $K$. Replacing $K$ by $K-a$ we can assume without loss of generality that $0$ belongs to the interior of $K$. Thus, from [Corollary 224., \cite{HHZ}], each linear functional that is continuous on $K$, belongs to the space $E$. This shows that 
$$\lbrace x^*_{|K}+r : x^*\in X^*, r\in \R \rbrace=\lbrace \hat{x}_{|K} + r: x\in E, r\in \R \rbrace $$ where $\hat{x}$ denotes the map $x^* \mapsto x^*(x)$ for all $x^*\in E^*$. It is easy to see that the space $\lbrace \hat{x}_{|K} + r: x\in E, r\in \R \rbrace$ equipped with the sup-norm on $K$ is isomorphic to $(E\oplus \R, \|.\|+|.|)$, since $K$ is norm bounded and contain $0$ in its (norm) interior. Hence, $\lbrace x^*_{|K}+r : x^*\in X^*, r\in \R \rbrace$ is a closed Banach subspace of $(Aff(K), \|.\|_{\infty})$. This implies by Proposition \ref{Aff} that $$Aff(K)=\lbrace \hat{x}_{|K} + r: x\in E, r\in \R \rbrace .$$  Note that since $X^*=E$, by definition we have that the weak$^*$ exposed points of the set $K$ considered as a subset of the dual Banach space $(E^*,\|.\|)$ coincides with the exposed points of $K$ considered as a subset of the $\textnormal{l.c.t}$ space $X=(E^*,\textnormal{Weak}^*)$. Note also that a map $\hat{x}_{|K} + r$ affine expose $K$ if and only if $\hat{x}_{|K}$ expose $K$. Hence, $w^*\textnormal{Exp}(K)= \textnormal{AExp}(K)$.

\end{proof}
\begin{Prop} In normed vector space, the exposed points and the affine exposed points coincides for a nonempty convex compact set of finite-dimension (i.e compact convex set whose affine hull is finite-dimensional).
\end{Prop}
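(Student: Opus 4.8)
Let me figure out what the final statement is claiming: in a normed vector space, for a nonempty convex compact set $K$ of finite dimension (meaning its affine hull is finite-dimensional), we have $\textnormal{Exp}(K) = \textnormal{AExp}(K)$.

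The strategy: since $\textnormal{Exp}(K) \subseteq \textnormal{AExp}(K)$ always holds, I only need the reverse inclusion. Take $x \in \textnormal{AExp}(K)$, so there is an affine continuous $\tau \in Aff(K)$ attaining its strict maximum over $K$ at $x$. I want to produce a genuine continuous linear functional on the whole normed space $X$ that strictly exposes $K$ at $x$.

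The key observation is that the affine hull $H$ of $K$ is finite-dimensional, say of dimension $n$. An affine continuous function on a finite-dimensional affine space is automatically the restriction of an affine function defined on the whole of $H$ (in finite dimensions there is essentially one topology, and any affine map on $K$ extends affinely to $H$ — one can pick an affine basis of points from $K$, which exists since $K$ spans $H$ affinely, and define the extension by the unique affine interpolation). So $\tau = a_{|K}$ where $a : H \to \R$ is affine; write $a(y) = \ell(y - y_0) + c$ where $y_0 \in H$ is a fixed point, $\ell : H - y_0 \to \R$ is linear on the finite-dimensional vector space $V := H - y_0$, and $c \in \R$. Now $V$ is a finite-dimensional subspace of $X$ (after translating so that $y_0$ is the origin, or more carefully, a finite-dimensional linear subspace of the translate), and every linear functional on a finite-dimensional subspace extends, by Hahn-Banach, to a continuous linear functional $\tilde\ell \in X^*$. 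Then $x^* := \tilde\ell$ satisfies $x^*(y) + \text{const} = \tau(y)$ for all $y \in K$, so $x^*$ attains its strict maximum over $K$ precisely at $x$, i.e. $x \in \textnormal{Exp}(K)$.

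**The main obstacle.** The one genuinely delicate point is justifying that every affine \emph{continuous} function on $K$ extends to an affine function on the affine hull $H$ and that $\tau$ really is an affine-plus-constant of a linear functional on a finite-dimensional \emph{linear} subspace of the ambient space — one must be careful about the affine-versus-linear bookkeeping and about the fact that continuity is free in finite dimensions only after one knows the relevant space is finite-dimensional. Concretely: choose $n+1$ affinely independent points $p_0, \dots, p_n \in K$ spanning $H$; each $y \in H$ has unique barycentric coordinates $\lambda_i(y)$ with $\sum \lambda_i = 1$, and these are affine functions of $y$; define $\bar\tau(y) := \sum_i \lambda_i(y)\,\tau(p_i)$, which agrees with $\tau$ on $K$ by affinity of $\tau$. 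Then $\bar\tau(y) - \bar\tau(p_0) = \sum_i \lambda_i(y)(\tau(p_i) - \tau(p_0))$ is linear in $y - p_0$ on the $n$-dimensional space $V = \operatorname{span}\{p_i - p_0\}$, hence extends by Hahn-Banach to $x^* \in X^*$, and $x^* + (\bar\tau(p_0) - x^*(p_0))$ agrees with $\tau$ on $K$. Since $\tau$ strictly maximizes at $x$, so does $x^*$, giving $x \in \textnormal{Exp}(K)$. This completes the reverse inclusion, and with the trivial inclusion we conclude $\textnormal{Exp}(K) = \textnormal{AExp}(K)$.
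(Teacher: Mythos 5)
Your proof is correct, and it reaches the same endgame as the paper (a Hahn--Banach extension of a linear functional from the finite-dimensional hull of $K$ to the whole space), but the key intermediate step is handled differently. The paper translates so that $0\in K$, observes that $K$ has nonempty interior in its linear hull $V_0$ because $V_0$ is finite-dimensional, and then invokes its Proposition \ref{int-nvide} (which itself rests on the density of $L(K)$ in $(Aff(K),\|.\|_\infty)$, Proposition \ref{Aff}, plus a closedness argument) to conclude that $Aff(K)=\lbrace x^*_{|K}+r : x^*\in V_0^*,\ r\in\R\rbrace$. You instead extend the exposing affine function directly and elementarily: pick an affine basis $p_0,\dots,p_n\in K$ of the affine hull, define $\bar\tau$ by barycentric interpolation, and check it restricts to a linear-plus-constant on $K$. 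Your route is more self-contained and avoids the density machinery; the paper's route is shorter given that Proposition \ref{int-nvide} is already available. One small point in your argument deserves an explicit word: the barycentric coordinates of a point of $K$ relative to $p_0,\dots,p_n$ need not be nonnegative, so the identity $\tau(y)=\sum_i\lambda_i(y)\tau(p_i)$ is not literally an instance of the convex-combination definition of affinity; it follows from the standard (and easy) fact that a convex-combination-affine function on a convex set respects all affine combinations whose arguments and result lie in the set (split the coefficients by sign and rewrite as an equality of two convex combinations). With that remark supplied, the proof is complete.
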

\begin{proof} Let $V$ be a normed vector space and $K$ be a convex compact set of finite-dimension. Up to a translation, we can assume without loss of generality that $0\in K$. Let $V_0$ be the linear hull of $K$. Then $K$ has nonempty interior in $V_0$, since $V_0$ is finite-dimentional. Hence, we have that $Aff(K)=\lbrace x^*_{|K}+ r/ x^*\in V_0^*; r\in \R\rbrace$ by Proposition \ref{int-nvide} (since, weak and norm topology coincides in finite-dimentional). Thus, if $\varphi\in Aff(K)$, affine expose $k\in K$, then there exists $x^*\in V_0^*$ and $r\in \R$ such that $x^*_{|K}+ r$ expose $K$ at $k$. This is equivalent to the fact that $x^*$ expose $K$ at $k$ (since $r$ is a constant). Now, by the Hahn-Banach theorem, there exists $\tilde{x}^*\in V^*$ such that $\tilde{x}^*$ coincides with $x^*$ on $V_0$. Hence, $\tilde{x}^*\in V^*$ also expose $K$ at $k$. This shows that $k$ is an exposed point of $K$.
\end{proof}

\begin{Rem} 
We know from Proposition \ref{Aff} that the set $$L(K):=\left\{ a \in Aff(K) : a = r + x^*_{|K} \hspace{1mm}for \hspace{1mm}some\hspace{1mm} x^* \in X^* \hspace{1mm}and \hspace{1mm}some\hspace{1mm} r \in \R\right\}$$ is dense in $(Aff(K), \|.\|_{\infty})$.  As it is given in Proposition \ref{int-nvide}, there exists situations where the sets $L(K)$ and $Aff(K)$ coincides, for instance if $K=B_{E^*}$ in $(E^*,\textnormal{Weak}^*)$, where $E$ is a Banach space. There exist also other situations, where  $L(K)$ can be very "small" subset of $Aff(K)$. Indeed, if $K$ is a compact metrizable subset of a \textnormal{l.c.t} space $X$, without exposed points (for example if $K=[-1,1]^{\aleph_0}$ in $\R^{\aleph_0}$), then from Proposition \ref{Cor0}, we get that $L(K)\subseteq N(0)$ and so $L(K)$ can be covered by countably many $d.c$ hypersurface in $(Aff(K),\|.\|_{\infty})$.
\end{Rem}
{\bf C) Example where $\textnormal{AExp}(K)\varsubsetneq \textnormal{Ext}(K)$.} It is well known that even in the two dimensional space $\R^2$, there exists a closed unit ball $B$ for a suitable norm, such that $\textnormal{Exp}(B)\neq \textnormal{Ext}(B)$ (See for instance Examples 5.9 in \cite{Ph}). Thus by Proposition \ref{int-nvide} we have also that $\textnormal{AExp}(B)\neq \textnormal{Ext}(B)$.

\begin{Rem} \label{R4} Note that the Corollary \ref{GKlee} and Corollary \ref{K-M-B} fails for convex compact sets which are not metrizable. Indeed, take $K=B_{E^*}$ where $E=l^{1}(\Gamma)$ ($\Gamma$ is uncountable set), we know that the norm $\|.\|_1$ is nowhere G\^{a}teaux differentiable (See Example 1.4 (b) p. 3 in \cite{Ph}). So from [Proposition 6.9., \cite{Ph}] we get that the dual unit ball $B_{(l^{1}(\Gamma))^*}$ in the $\textnormal{l.c.t}$ space $((l^{1}(\Gamma))^*, \textnormal{Weak}^*)$, has no ($\textnormal{weak}^*$) exposed points. It follows from Proposition \ref{int-nvide} that $w^*\textnormal{Exp}(B_{(l^{1}(\Gamma))^*})=\textnormal{AExp}(B_{(l^{1}(\Gamma))^*})=\emptyset$. Note also that the assumption of local convexity cannot be omitted. Indeed, Roberts, proved in \cite{R} (1977), that there exist a Hausdorff topological vector space $X$ which is metrizable by a complete metric, and a nonempty compact convex set $K \subseteq X$ such that $\textnormal{Ext}(K)=\emptyset$.
\end{Rem}
\subsection{Remarks on the A.E.P.P spaces.}

We introduce the following class of $\textnormal{l.c.t}$ spaces. 
\begin{Def} An $\textnormal{l.c.t}$ space $X$ is said to have the "\textnormal{Affine Exposed Points Property}" (in short \textnormal{A.E.P.P.}) if and only if every convex compact subset of $X$ is the closed convex hull of its affine exposed points. 
\end{Def}

Let us define  
$$\Xi:=\lbrace X \hspace{2mm} \textnormal{l.c.t}\hspace{2mm} \textnormal{space in which every compact subset is metrizable}\rbrace.$$
 The class  $\Xi$, has been actively studied in the $80'$s years by several authors. This class contains of course all metrizable  \textnormal{l.c.t} spaces, in particular Fr\'echet spaces but is much larger. For several examples, we refer to \cite{CO} and references therein.

\vskip5mm
We obtain immediately from Corollary \ref{K-M-B} the following corollary.
\begin{Cor} Every space from the class $\Xi$, has the A.E.P.P.
\end{Cor}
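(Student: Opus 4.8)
The plan is to deduce this corollary directly from Corollary \ref{K-M-B}. Recall that Corollary \ref{K-M-B} asserts that \emph{every convex compact metrizable} subset $K$ of an \textnormal{l.c.t} space $X$ satisfies $\textnormal{AExp}(K)\neq\emptyset$ and $K=\overline{\textnormal{conv}}(\textnormal{AExp}(K))$; equivalently, $K$ is the closed convex hull of its affine exposed points. The definition of the class $\Xi$ is precisely that $X$ is an \textnormal{l.c.t} space in which every compact subset is metrizable, so the metrizability hypothesis of Corollary \ref{K-M-B} is automatically met for every compact subset of such an $X$.

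Concretely, I would argue as follows. Let $X\in\Xi$ and let $K\subseteq X$ be an arbitrary convex compact subset. Being a compact subset of a space in $\Xi$, $K$ is metrizable. Hence $K$ is a convex compact metrizable subset of the \textnormal{l.c.t} space $X$, and Corollary \ref{K-M-B} applies verbatim: $\textnormal{AExp}(K)\neq\emptyset$ and $K=\overline{\textnormal{conv}}(\textnormal{AExp}(K))$. Since $K$ was an arbitrary convex compact subset of $X$, this is exactly the statement that $X$ has the \textnormal{A.E.P.P.}

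There is essentially no obstacle here: the corollary is a one-line specialization of Corollary \ref{K-M-B}, and the only thing being used beyond that corollary is the defining property of $\Xi$ (every compact subset metrizable), which hands us the metrizability hypothesis for free. The work has already been done in establishing Theorem \ref{Th1} and its consequence Corollary \ref{K-M-B}; the role of this corollary is merely to record that the class $\Xi$ — which contains all metrizable \textnormal{l.c.t} spaces and in particular all Fr\'echet spaces, but is strictly larger — is a natural and broad setting in which the affine-exposed-points version of Krein--Milman holds. So the proof is simply: apply Corollary \ref{K-M-B} to an arbitrary convex compact subset, noting it is metrizable by the definition of $\Xi$.
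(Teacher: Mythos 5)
Your proof is correct and is exactly the argument the paper intends: the corollary is stated as an immediate consequence of Corollary \ref{K-M-B}, obtained by noting that every convex compact subset of a space in $\Xi$ is metrizable by definition, so Corollary \ref{K-M-B} applies. Nothing is missing.
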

In particular the space $\R^{\aleph_0}$ has the A.E.P.P. Examples of $\textnormal{l.c.t}$ spaces having the A.E.P.P. who do not belong to the class $\Xi$ are given in Remark \ref{Xi}. For an example of an $\textnormal{l.c.t}$ space without A.E.P.P. we mention the $\textnormal{l.c.t}$ space $((l^{1}(\Gamma))^*, \textnormal{Weak}^*)$, where $\Gamma$ is uncountable (See Remark \ref{Xi} below). Thus, spaces having A.E.P.P. encompasses a broad class of spaces and it would be interesting to better know their properties.
\begin{Exemp} Immediate examples.

$(1)$ Every Fr\'echet space has the A.E.P.P.

$(2)$ Every convex closed and bounded subset of a Fr\'echet-Montel space is the closed convex hull of its affine exposed points (in Fr\'echet-Montel space, any closed bounded set is compact metrizable). A classical example of a Fr\'echet-Montel space is the space $C^{\infty}(\Omega)$ of smooth functions on an open set $\Omega$ in $\R^n$.
\end{Exemp}

Recall that a Banach space $E$ is said to be a G\^{a}teaux differentiability space (GDS) iff each convex continuous real valued function defined on $E$ is G\^{a}teaux differentiable at each point of a dense subset. In \cite{Ph}, Phelps proved the following result.

\begin{Thm} \label{Ph} ([Theorem 6.2., \cite{Ph} p. 95]) A Banach space $E$ is a GDS if and only if every weak$^*$ compact
convex subset of $E^*$ is the weak$^*$ closed convex hull of its weak$^*$ exposed points.
\end{Thm}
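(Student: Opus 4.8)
The statement being an equivalence, the plan is to treat the two implications separately; the first is short and runs along the lines of the proof of Theorem \ref{Th1}, while the second is the substantial one.

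\emph{From GDS to the exposed-points representation.} Let $K\subseteq E^*$ be weak$^*$ compact convex. Then $K$ is norm bounded, so its support function $\sigma_K(x)=\sup_{x^*\in K}\langle x,x^*\rangle$ is convex and Lipschitz, hence continuous, on $E$; by the GDS hypothesis it is G\^ateaux differentiable on a dense set $D\subseteq E$. I would first recall the elementary facts that $\partial\sigma_K(x)=\{x^*\in K:\langle x,x^*\rangle=\sigma_K(x)\}$ for every $x$, and that a continuous convex function is G\^ateaux differentiable exactly where its subdifferential is a singleton; combined, these say that $\sigma_K$ is G\^ateaux differentiable at $x$ iff $x$ weak$^*$-exposes $K$ at some point. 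Hence $\{x\in E:x\textnormal{ weak}^*\textnormal{-exposes }K\}$ is dense and $w^*\textnormal{Exp}(K)\neq\emptyset$. If some $k_0\in K$ were not in $\overline{\textnormal{conv}}^{w^*}(w^*\textnormal{Exp}(K))$, then, $E$ being the dual of $(E^*,\textnormal{Weak}^*)$, Hahn--Banach would give $x\in E$ and $r\in\R$ with $\sup\{\langle x,k\rangle:k\in\overline{\textnormal{conv}}^{w^*}(w^*\textnormal{Exp}(K))\}<r<\langle x,k_0\rangle$; perturbing $x$ to a nearby $y\in D$ (the perturbation being uniformly small on the bounded set $K$), $y$ weak$^*$-exposes $K$ at some $k_1\in w^*\textnormal{Exp}(K)$, so $\langle y,k_1\rangle=\sigma_K(y)\ge\langle y,k_0\rangle$, contradicting $\langle y,k_1\rangle\le\sup\{\langle y,k\rangle:k\in\overline{\textnormal{conv}}^{w^*}(w^*\textnormal{Exp}(K))\}$ once $y$ is close enough to $x$. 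This is exactly the scheme already used for Theorem \ref{Th1}.

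\emph{From the exposed-points representation to GDS.} Assume every weak$^*$ compact convex subset of $E^*$ is the weak$^*$ closed convex hull of its weak$^*$ exposed points, and let $f:E\to\R$ be convex and continuous; it suffices to find a G\^ateaux differentiability point of $f$ in an arbitrary nonempty open ball $B$. Shrinking $B$ I may assume $f$ is $L$-Lipschitz on $B$, and replacing $f$ by the inf-convolution $f\,\square\,L\|\cdot\|$, which is globally $L$-Lipschitz, convex, and coincides with $f$ on a slightly smaller ball, I may assume $f$ is globally $L$-Lipschitz. Then $\textnormal{dom}(f^*)\subseteq LB_{E^*}$, so $K:=\overline{\textnormal{dom}(f^*)}^{\,\textnormal{Weak}^*}$ is weak$^*$ compact convex, $f^*$ is proper, convex and weak$^*$ lower semicontinuous on $K$, $f(x)=\sup_{x^*\in K}\{\langle x,x^*\rangle-f^*(x^*)\}$, and $\partial f(x)=\{x^*\in K:\langle x,x^*\rangle-f^*(x^*)=f(x)\}$; hence $f$ is G\^ateaux differentiable at $x$ precisely when $\langle x,\cdot\rangle-f^*(\cdot)$ has a unique maximizer over $K$. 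To feed in the hypothesis I would pass to the weak$^*$ compact convex set $\widetilde K:=\{(x^*,s)\in E^*\times\R:x^*\in K,\ -c\le s\le -f^*(x^*)\}\subseteq(E\times\R)^*$ for a large constant $c$, for which $f=\sigma_{\widetilde K}(\cdot,1)$ on a small ball around the centre of $B$; applying the hypothesis to $\widetilde K$ gives $\widetilde K=\overline{\textnormal{conv}}^{w^*}(w^*\textnormal{Exp}(\widetilde K))$. The key computation is that, for such $x$, the direction $(x,1)\in E\times\R$ weak$^*$-exposes $\widetilde K$ at $(x^*,-f^*(x^*))$ if and only if $x^*$ is the unique maximizer of $\langle x,\cdot\rangle-f^*(\cdot)$ over $K$, i.e. iff $f$ is G\^ateaux differentiable at $x$ with $f'(x)=x^*$. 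One then has to produce such an exposing direction with $x$ lying in $B$, and I would attempt this again by the separation scheme of the first part: were $f$ nowhere G\^ateaux differentiable on $B$, no normalized direction $(x,1)$ with $x\in B$ would expose $\widetilde K$, and one would separate a suitable weak$^*$ exposed point of $\widetilde K$ (lying on the upper face $s=-f^*(x^*)$) from the weak$^*$ closed convex hull of those exposed points reachable through admissible directions, contradicting $\widetilde K=\overline{\textnormal{conv}}^{w^*}(w^*\textnormal{Exp}(\widetilde K))$.

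The main obstacle I expect lies precisely in this last step: organising the passage to the truncated epigraph $\widetilde K$ so that the exposed points produced by the hypothesis can, after rescaling the exposing functional, be realised by directions of the normalized form $(x,1)$ with $x$ in the prescribed ball, and then carrying out the separation inside $E\times\R$ rather than merely extracting some weak$^*$ exposed point of $\widetilde K$. A tempting shortcut would be to argue directly on $K$ with the perturbation $f^*$, in the spirit of Proposition \ref{Boule} and Corollary \ref{Diff}, but that route needs the \emph{generic existence of strict minima} of $f^*-\hat x$ on $K$, which the bare exposed-points hypothesis does not supply once $K$ fails to be metrizable; reconciling these two viewpoints is where the genuine content of the non-metrizable case sits, and is the step on which I would concentrate the effort (failing which one may simply invoke Phelps's proof, \cite{Ph}).
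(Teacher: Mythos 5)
The paper does not prove this statement at all: it is quoted verbatim from Phelps [Theorem 6.2, \cite{Ph}] as background, so there is no internal proof to compare yours with. Judged on its own terms, your first direction (GDS implies the exposed-point representation) is correct: $\sigma_K$ is convex and Lipschitz, $\partial\sigma_K(x)$ is the face of $K$ on which $x$ attains its supremum, G\^ateaux differentiability at $x$ is equivalent to that face being a singleton, i.e.\ to $x$ weak$^*$-exposing $K$, and the density of differentiability points combined with the separation-plus-perturbation scheme (the same one the paper uses for Theorem \ref{Th1} and Corollary \ref{GKlee}) yields $K=\overline{\textnormal{conv}}^{w^*}(w^*\textnormal{Exp}(K))$.

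The converse, however, contains a genuine gap which you partly acknowledge. Applying the hypothesis to the truncated epigraph $\widetilde K$ only tells you that $\widetilde K$ is the weak$^*$ closed convex hull of points exposed by \emph{some} functional $(y,t)\in E\times\R$; nothing forces $t\neq 0$, let alone $t>0$ with $y/t$ lying in the prescribed ball $B$. Consequently the contradiction you aim for does not materialize: the assertion that no direction of the form $(x,1)$ with $x\in B$ exposes $\widetilde K$ is perfectly compatible with $\widetilde K=\overline{\textnormal{conv}}^{w^*}(w^*\textnormal{Exp}(\widetilde K))$, because all the exposed points could be produced by inadmissible functionals. Your proposed separation is carried out against the hull of the \emph{reachable} exposed points, a set about which the hypothesis says nothing. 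Closing this requires the additional machinery of Phelps's actual argument (the \v{S}mulyan-type duality of [Proposition 6.9, \cite{Ph}] together with a Milman-type localization of the weak$^*$ exposed points of a weak$^*$ closed convex hull, applied to weak$^*$ closures of images of the subdifferential map over small balls), none of which is supplied here; your fallback of "invoking Phelps's proof" is in effect exactly what the paper does by citing the result without proof.
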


\begin{Rem} \label{Xi} $(1)$ Since the exposed points are in particular affine exposed points, it follows from the above theorem that the space $(E^*,\textnormal{Weak}^*)$ has the A.E.P.P. whenever $E$ is a GDS. However, if $E$ is a non separable GDS, the dual unit ball is a $\textnormal{weak}^*$ compact not metrizable subset. Thus, the space $(E^*,\textnormal{Weak}^*)$ has the A.E.P.P. but $(E^*,\textnormal{Weak}^*)\notin \Xi$, whenever $E$ is a nonseparable GDS (For example the nonseparable Hilbert spaces). 

$(2)$ The $\textnormal{l.c.t}$ space $((l^{1}(\Gamma))^*, \textnormal{Weak}^*)$ does not have the A.E.P.P. (See Remark \ref{R4}). More generally, the $\textnormal{l.c.t}$ space $(E^*, \textnormal{Weak}^*)$ does not have the A.E.P.P. whenever $E$ is a Banach space equipped with a nowhere G\^{a}teaux differentiable norm.
\end{Rem}


For examples of not metrizable spaces which belongs to the class $\Xi$, we have for example:
\begin{Prop} \label{fin} Let $E$ be a separable Banach space. Then $(E^*,\textnormal{Weak}^*)$ and $(E,\textnormal{Weak})$ belongs to the class $\Xi$, in particular, they have the A.E.P.P. but are not metrizable.
\end{Prop}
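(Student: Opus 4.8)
The plan is to prove Proposition \ref{fin}, which asserts that for a separable Banach space $E$, the spaces $(E^*,\textnormal{Weak}^*)$ and $(E,\textnormal{Weak})$ belong to the class $\Xi$ (every compact subset is metrizable), hence have the \textnormal{A.E.P.P.}, but are not themselves metrizable. I would treat the three assertions separately. For membership in $\Xi$: let $K$ be a weak$^*$ compact subset of $E^*$. Since $E$ is separable, pick a countable dense sequence $(x_n)_n$ in $E$; the countable family of evaluation functionals $\{\widehat{x_n}\}_n$ separates the points of $E^*$ by the Hahn--Banach theorem, and each is weak$^*$ continuous. A standard fact (a compact Hausdorff space is metrizable iff it admits a countable family of continuous real functions separating its points) then shows $(K,\textnormal{Weak}^*)$ is metrizable; concretely one may take $d(f,g):=\sum_n 2^{-n}\min(1,|\widehat{x_n}(f-g)|)$ and check it induces the weak$^*$ topology on the compact set $K$ using that a continuous bijection from a compact space to a Hausdorff space is a homeomorphism. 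For $(E,\textnormal{Weak})$, the argument is symmetric: a weakly compact subset $K\subseteq E$ is norm bounded (uniform boundedness), and since $E$ is separable $B_{E^*}$ is weak$^*$ separable, so one can extract a countable subset $(x_n^*)_n$ of $B_{E^*}$ separating the points of $E$ (this uses separability of $E$, giving weak$^*$ metrizability of $B_{E^*}$ and hence a countable weak$^*$ dense subset which separates points of $E$); then the same metrizability criterion applies to $(K,\textnormal{Weak})$.

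Having shown $(E^*,\textnormal{Weak}^*)\in\Xi$ and $(E,\textnormal{Weak})\in\Xi$, the \textnormal{A.E.P.P.} is immediate from the corollary preceding the statement (every space in $\Xi$ has the \textnormal{A.E.P.P.}), since both $(E^*,\textnormal{Weak}^*)$ and $(E,\textnormal{Weak})$ are $\textnormal{l.c.t}$ spaces.

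For the non-metrizability: here I would invoke the dichotomy that $(E^*,\textnormal{Weak}^*)$ is metrizable if and only if $E$ is finite-dimensional, and likewise $(E,\textnormal{Weak})$ is metrizable iff $E$ is finite-dimensional. One clean way: if $(E^*,\textnormal{Weak}^*)$ were metrizable, then in particular it would be first countable at $0$, so there would be a countable base of weak$^*$ neighborhoods of $0$; each such basic neighborhood contains a set of the form $\{f: |f(x_1)|,\dots,|f(x_k)|<\varepsilon\}$, hence is contained in the kernel span of finitely many points of $E$, and the union of all the finite sets involved would be a countable set $D\subseteq E$ whose annihilator properties force $E=\overline{\textnormal{span}}(D)$ while simultaneously the intersection of the corresponding kernels is $\{0\}$; a standard linear-algebra/Baire argument then yields $\dim E<\infty$, contradicting that $E$ is an infinite-dimensional Banach space (the statement is understood for genuinely infinite-dimensional $E$; otherwise the assertion "not metrizable" is vacuous or should be read as excluded). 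The same reasoning, with the roles of $E$ and $E^*$ exchanged and using that $B_{E^*}$ is weak$^*$ compact, handles $(E,\textnormal{Weak})$.

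The main obstacle I anticipate is the metrizability criterion step: one must be careful that separating points by countably many continuous functions gives metrizability only on \emph{compact} subsets, not on the whole space, and that the candidate metric $d$ above indeed recovers the subspace topology — this requires the compactness of $K$ to upgrade the continuous injection $(K,\textnormal{Weak}^*)\to([0,1]^{\N}$ or $\R^{\N})$ to a homeomorphism onto its image. This is exactly the mechanism already used in the proof of Proposition \ref{PropP1} in the excerpt, so I would model the argument on that proof. The non-metrizability direction is routine once one commits to the first-countability obstruction, and the \textnormal{A.E.P.P.} conclusion is a one-line citation of the preceding corollary.
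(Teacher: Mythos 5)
Your proposal is correct, and for the $(E,\textnormal{Weak})$ half it takes a genuinely different route from the paper. For $(E^*,\textnormal{Weak}^*)\in\Xi$ and for the non-metrizability of the whole spaces, the paper simply cites these as well-known facts, whereas you supply the standard proofs (countable dense sequence in $E$ giving a countable separating family of weak$^*$ continuous functions, plus the compact-Hausdorff metrization criterion; first countability plus a Baire argument for non-metrizability) --- same content, just made explicit, with the caveat you correctly flag that non-metrizability requires $E$ infinite-dimensional. The real divergence is in showing $(E,\textnormal{Weak})\in\Xi$: the paper observes that a weakly compact $K\subseteq E$ is norm separable (since $E$ is), views $K$ via the canonical embedding as a norm-separable weak$^*$ compact subset of $E^{**}$, and invokes [Lemma 2, \cite{Ba1}] to get weak$^*$ (hence weak) metrizability; you instead use separability of $E$ to extract a countable weak$^*$ dense subset of $B_{E^*}$, note that it separates the points of $E$, and apply the same metrization criterion as in the weak$^*$ case --- precisely the mechanism of Proposition \ref{PropP1}. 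Your approach is more self-contained (no appeal to the external lemma of \cite{Ba1}) and treats the two halves symmetrically; the paper's approach avoids having to produce a countable separating subfamily of $E^*$ by trading it for norm separability of $K$ itself. One small point of rigor: your non-metrizability sketch conflates "the countable union of the finite sets is total in $E$" with the stronger statement actually needed, namely that every $x\in E$ lies in the linear span of one of the finite sets (because a weak$^*$ neighborhood base at $0$ forces each $\hat{x}$ to vanish on a finite intersection of kernels); it is this stronger statement that feeds the Baire argument. Since the paper itself only cites this fact, the looseness is harmless, but the precise formulation is worth recording.
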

\begin{proof} It is well known that the whole spaces $(E^*,\textnormal{Weak}^*)$ and $(E,\textnormal{Weak})$ are not metrizable. It is also well known that a Banach space $E$ is separable iff every compact subset of $(E^*,\textnormal{Weak}^*)$ is metrizable. Thus, $(E^*,\textnormal{Weak}^*)\in \Xi$. For the space $(E,\textnormal{Weak})$, let $K$ be a weak compact subset of $E$. Since $E$ is separable, then $K$ is also separable. Now, consider $K$ as a subset of $E^{**}$ by the canonical embedding, we get that $K$ is norm separable and weak$^*$ compact subset of $E^{**}$, which implies from [Lemma 2, \cite{Ba1}] that $K$ is weak$^*$ metrizable in $E^{**}$. In other words, $K$ is weak metrizable. Thus $(E,\textnormal{Weak})\in \Xi$.

\end{proof}

Several others not trivial examples of spaces from $\Xi$ can be found in \cite{CO}.

\bibliographystyle{amsplain}

\end{document}